\renewcommand\sss[1][n]{\mathfrak{S}_{#1}}
\renewcommand\spe[1]{\rspe{#1}}
\newcommand\rrspe[1]{\operatorname{S}_{#1}}
\renewcommand\D[1]{\rD{#1}}
\newcommand{\Ext}{\operatorname{Ext}}
\newcommand{\Rad}{\operatorname{Rad}}
\newcommand{\Soc}{\operatorname{Soc}}
\newcommand{\EXT}{\operatorname{EXT}}
\renewcommand\phi\varphi
\newcommand{\vn}{\varnothing}
\begin{document}

\title{Schurian-finiteness of blocks of type $A$ Hecke algebras}

\author{Susumu Ariki\\\normalsize Osaka University\\\normalsize Suita, Osaka, Japan 565-0871 \\\texttt{\normalsize ariki.susumu.ist@osaka-u.ac.jp}\\[11pt]
Sin\'ead Lyle
\\\normalsize University of East Anglia\\\normalsize Norwich Research Park, Norwich NR4 7TJ, UK\\
\texttt{\normalsize s.lyle@uea.ac.uk}\\[11pt]
Liron Speyer
\\\normalsize Okinawa Institute of Science and Technology\\\normalsize Onna-son, Okinawa, Japan 904-0495 \\\texttt{\normalsize liron.speyer@oist.jp}
}


\renewcommand\auth{Susumu Ariki, Sin\'ead Lyle, \& Liron Speyer}

\runninghead{Schurian-finiteness of blocks of type $A$ Hecke algebras}

\msc{20C08, 05E10, 16G10, 81R10}

\toptitle

\begin{abstract}
For any algebra $A$ over an algebraically closed field $\bbf$, we say that an $A$-module $M$ is Schurian if $\End_A(M) \cong \bbf$.
We say that $A$ is Schurian-finite if there are only finitely many isomorphism classes of Schurian $A$-modules, and Schurian-infinite otherwise.
By work of Demonet, Iyama and Jasso it is known that Schurian-finiteness is equivalent to $\tau$-tilting-finiteness, so that we may draw on a wealth of known results in the subject.
We prove that for the type $A$ Hecke algebras with quantum characteristic $e\geq 3$, all blocks of weight at least $2$ are Schurian-infinite in any characteristic.
Weight $0$ and $1$ blocks are known by results of Erdmann and Nakano to be representation finite, and are therefore Schurian-finite.
This means that blocks of type $A$ Hecke algebras (when $e\geq 3$) are Schurian-infinite if and only if they have wild representation type if and only if the module category has finitely many wide subcategories.
Along the way, we also prove a graded version of the Scopes equivalence, which is likely to be of independent interest.
\end{abstract}

\section{Introduction}

Let $A$ be a finite-dimensional algebra over an algebraically closed field $\bbf$.
All the $A$-modules we consider in this paper are assumed to be finite-dimensional left modules.
Then, an $A$-module $M$ is called Schurian if $\End_A(M) = \bbf$.
Schurian modules appear in various places.
Among them, the following two examples are well-known.

\begin{itemize}
\item[(1)]
Let $\theta:K_0(A\textup{-mod})\to \bbz$ be a linear map.
An $A$-module $M$ is called stable if $\theta(M)=0$ and any nonzero proper submodule $N$ satisfies $\theta(N)<0$.
Stable modules are Schurian.

\item[(2)]
Let $A$ be an $\bbf$-algebra.
If an $A$-module $M$ is an indecomposable module which belongs to the preprojective or the preinjective component of the Auslander--Reiten quiver of $A$, then $M$ is Schurian and $\Ext^1_A(M,M)=0$ -- for example, see~\cite[Chap.VIII, Lemma~2.7]{ASS1}.
\end{itemize}

An algebra $A$ is called Schurian-finite if the number of isomorphism classes of Schurian modules is finite.

\begin{rem}
In \cite{CKW}, they call an algebra $A$ Schur-representation-finite if there are finitely many isomorphism classes of Schurian $A$-modules of a fixed dimensional vector $d$, for each $d$.
\end{rem}

There had been few results on Schurian modules until recently, 
but the $\tau$-tilting theory initiated by Adachi, Iyama and Reiten \cite{AIR} has changed the perspective.
An $A$-module $M$ is $\tau$-rigid if $\Hom_A(M,\tau(M))=0$, where $\tau$ is the Auslander--Reiten translate.
An algebra $A$ is then called $\tau$-tilting finite if there are only finitely many isomorphism classes of indecomposable $\tau$-rigid $A$-modules.
Demonet, Iyama and Jasso \cite[Theorem~4.2]{DIJ} proved that the map that sends an $A$-module $M$ to $M/\Rad_B(M)$, where $B=\End_A(M)$, induces a bijection between isomorphism classes of indecomposable $\tau$-rigid $A$-modules and isomorphism classes of Schurian $A$-modules if $A$ is $\tau$-tilting finite, and that $\tau$-tilting finiteness coincides with Schurian finiteness.
Thus, Schurian-finiteness has reappeared under the name \emph{$\tau$-tilting finiteness}.

Let $M$ be a $\tau$-rigid $A$-module, and $P$ be a projective $A$-module.
We call the pair $(M,P)$
support $\tau$-tilting if $\Hom_A(P,M)=0$ and the numbers of pairwise non-isomorphic indecomposable direct summands of $M$ and $P$ sum up to the number of the isomorphism classes of simple $A$-modules.
$M$ is called a support $\tau$-tilting module.
Then, basic support $\tau$-tilting modules are in bijection with functorially-finite torsion classes in $A\textup{-mod}$, and with two-term silting complexes in $K^b(A\textup{-proj})$~\cite[Theorem~0.5]{AIR}.
In other words, the study of Schurian modules has applications to those representation theoretic classifications.
Another application is that if $A$ is wild and Schurian-finite, then it gives an example of a wild algebra which is not strictly wild.
Recently, the relationship with polytope theory has also been pursued.

Because of its importance, researchers who study $\tau$-tilting theory look at various examples to check whether they are $\tau$-tilting finite or not.
For example, we know that preprojective algebras of Dynkin type are Schurian-finite by~\cite{Mizuno14}.
Note that if $A$ is representation-finite, then $A$ is Schurian-finite.
An immediate consequence of this is that blocks of Hecke algebras of weight $0$ or $1$ are Schurian-finite, by \cite{enreptype,APA1}.
The converse may not hold.
Indeed, preprojective algebras of type other than $A_n$ for $1\leq n \leq 4$ are representation-infinite.
Another example is a multiplicity-free Brauer cyclic graph algebra with an odd number of vertices~\cite{Ad2}.
More recently still, it was shown by Miyamoto and Wang~\cite[Corollary~3.14]{mw22} that if the characteristic of the base field is not $2$, then any self-injective cellular algebra of polynomial growth is Schurian-finite.

However, it is observed that the converse \emph{does} hold for many classes of algebras.
The first obvious examples are those which admit preprojective component,
such as the class of path algebras of acyclic quivers, quasi-tilted algebras~\cite{ch97}, and algebras that satisfy the separation condition~\cite[Chapter IX, Theorem 4.5]{ASS1}.
Other examples include cycle-finite algebras~\cite{MS16}, gentle algebras~\cite{Plam19}, tilted and cluster-tilted algebras~\cite{zito20},
locally hereditary algebras~\cite{AHMW}, and simply-connected algebras~\cite{qiwangsimplyconnected}.
As algebras satisfying the separation condition are simply-connected~\cite{skow93}, it also implies that representation finiteness coincides with Schurian-finiteness for them.

In this article, we initiate the study of Schurian-finiteness for block algebras of the Hecke algebra $\hhh$ of the symmetric group and add a new family of algebras for which Schurian-finiteness coincides with representation finiteness.
The difficulties to overcome were twofold.
Recall that block algebras of the Hecke algebra of the symmetric group $\sss$ are labeled by $e$-cores $\rho$ such that $n-|\rho|\in e\bbz_{\geq0}$, where $e$ is the \emph{quantum characteristic}.
The nonnegative integer $(n-|\rho|)/e$ is called the weight of the block algebra.
As all of the known methods to determine the Schurian-infiniteness of a finite-dimensional algebra are based on the bound quiver presentation of the algebra, we have to obtain information of the Gabriel quiver of each block algebra of $\hhh$ which suffices to determine the Schurian-infiniteness.
Then, the first difficulty was the lack of a method of reduction to small $n$ (except for the Scopes equivalence), that would allow us to determine the Schurian-infiniteness by explicit computation.
For example, the induction functor does not behave well with the Schurian property.
On the other hand, we may use the induction functor to determine the representation type of block algebras of $\hhh$.
Thus, we must control the Gabriel quiver of the block algebra, for large $n$, itself.
Then, the second difficulty was the fact that it is not easy to compute the Gabriel quiver of the block even if one tries to find a small piece of information on the structure of indecomposable projective modules.
Indeed, when we study algebras outside of bound quiver algebras, this is the most difficult part.
Curiously, this reality is often overlooked.

The key idea to overcome those difficulties is \cref{prop:matrixtrick}.
It asserts that if we find a certain set of partitions for which the graded decomposition numbers labelled by them satisfy certain conditions, then we may find enough information to determine Schurian-infiniteness.
Because of the proposition, we are able to focus on graded decomposition numbers exclusively to prove our main theorems.
This assertion may sound a bit surprising because graded decomposition numbers of a graded cellular algebra are determined by the Grothendieck group of its graded modules and they do not contain information on the Gabriel quiver in general.
When we consider the blocks of $\hhh$ in positive characteristic that appear in \cref{prop:matrixtrick}, the submatrix of graded decomposition numbers that satisfy the necessary extra conditions still does not give us information on extensions between the corresponding simple modules for $\hhh$.
Rather, we deduce extensions between simples for some idempotent truncation, from which we can deduce Schurian-infiniteness of the truncated algebra, and lift the property back to $\hhh$.
To apply \cref{prop:matrixtrick}, we must find partitions that satisfy the assumptions of the proposition.
We note that this requires insight, and another new idea here is in the novel use of various runner-removal and row-removal theorems, which allow us to perform the necessary computations in order to find a submatrix of decomposition numbers among the list in \cref{prop:matrixtrick}.

In small characteristics, we often see different behaviour than the general patterns.
Thus, we need a different method to determine the Schurian-finiteness.
In particular, we assume that the quantum characteristic is $e\ne 2$ for our main theorem.

\begin{rmk}
If $e=2$ and $p\neq 2$, then blocks of weight $0$, $1$ or $2$ are Schurian-finite.
By Erdmann and Nakano~\cite{enreptype}, these weight $2$ blocks have infinite tame representation type.
When $e=2$, blocks of weight at least $3$ have wild representation type, but our methods cannot determine whether these blocks are Schurian-infinite.

Morita classes of tame blocks are classified by the first author~\cite{ariki21} under the assumption $p\ne2$.
Then Wang showed in~\cite[Theorem~5.5]{qiwang2point} that all those tame blocks of Hecke algebras are $\tau$-tilting finite (and therefore Schurian-finite).
In~\cite[Theorem~3.4]{qiwang22}, Wang also showed that blocks of tame Schur algebras $S(n, r)$, which occur if $p=2$, are $\tau$-tilting finite, from which it follows that the (indecomposable) Hecke algebra $\hhh[4]$ is also Schurian-finite if $e=p=2$.
This is the only example of a block of $\hhh$ of weight at least $2$, for $e=p=2$, for which we can determine whether it is Schurian-finite.
\end{rmk}

The following theorem is the main result of this article.

\begin{thm}\label{thm:main}
Suppose $e\geq3$ and that $B$ is a block of $\hhh$ of weight at least $2$.
Then $B$ is Schurian-infinite in any characteristic.
In particular, if $e\geq 3$, then $B$ is Schurian-finite if and only if it is representation-finite.
\end{thm}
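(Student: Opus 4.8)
The plan is to deduce the ``in particular'' statement from the main assertion that every block $B$ of weight at least $2$ is Schurian-infinite. Indeed, by Erdmann and Nakano~\cite{enreptype} the blocks of weight $0$ or $1$ are representation-finite, hence Schurian-finite, while those of weight at least $2$ are representation-infinite; so once we know that every weight-$\geq 2$ block is Schurian-infinite, both Schurian-finiteness and representation-finiteness become equivalent to having weight at most $1$. The engine for the main assertion is \cref{prop:matrixtrick}: rather than computing the Gabriel quiver of $B$ directly, it suffices to locate inside $B$ a small family of partitions whose graded decomposition numbers form a submatrix satisfying the numerical hypotheses of that proposition. From such a submatrix one reads off enough $\Ext$-information for an idempotent truncation $fBf$ to see that $fBf$ is Schurian-infinite; and since Schurian-infiniteness of an idempotent truncation $fBf$ forces Schurian-infiniteness of $B$ (equivalently, by~\cite{DIJ}, $\tau$-tilting-infiniteness passes upward from truncations), $B$ itself is Schurian-infinite. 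Thus the whole problem reduces to exhibiting, in each block, partitions with the required decomposition submatrix.

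Next I would dispose of the infinitely many blocks of a fixed weight by invoking the graded Scopes equivalence established earlier in the paper. Since such an equivalence preserves Schurian-finiteness, and since the blocks of any fixed weight fall into only finitely many Scopes classes, it is enough to verify the main assertion for one representative block in each class. For weight $2$ this turns the task into a finite check: using the explicit combinatorial description of weight-$2$ graded decomposition numbers, one produces by hand partitions whose submatrix meets the hypotheses of \cref{prop:matrixtrick}, establishing Schurian-infiniteness for every weight-$2$ block.

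For weight at least $3$ the difficulty is that infinitely many weights remain, so a purely case-by-case approach cannot finish. Here the plan is to use the row-removal and runner-removal theorems to transport the analysis of a well-chosen submatrix of decomposition numbers in a high-weight block to that of a smaller block whose relevant numbers are already understood: stripping a row or a runner leaves the targeted entries unchanged while lowering the parameters, so partitions satisfying \cref{prop:matrixtrick} in the reduced block lift back to partitions satisfying it in $B$. The genuinely hard part, which I expect to be the main obstacle, is the combinatorial search itself. One must guarantee that in \emph{every} weight-$\geq 2$ block there really do exist partitions whose decomposition submatrix, after the removal reductions, matches one of the configurations that \cref{prop:matrixtrick} recognises as forcing Schurian-infiniteness, and one must track the grading shifts carefully so that the numerical hypotheses hold exactly. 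This is precisely where insight into the combinatorics of $e$-cores and the interplay of the various removal theorems becomes indispensable.
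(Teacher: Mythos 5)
Your outline reproduces the paper's architecture faithfully: the ``in particular'' clause from Erdmann--Nakano, \cref{prop:matrixtrick} as the engine (characteristic-zero $\Ext$ data transferred to an idempotent truncation of the block in characteristic $p$, then lifted back via \cref{reduction,cor:Schurianinfinitequiver}), the graded Scopes equivalence to cut each weight down to finitely many classes, runner removal in weight $2$, and row removal to push weight at least $4$ down to small weight. But there are two genuine gaps. First, \cref{prop:matrixtrick} requires $d^{e,p}_{\la\mu}(1)=d^{e,0}_{\la\mu}(1)$ for the chosen partitions, and this is precisely where small characteristic bites: for $p\leq w$ the adjustment matrix need not be trivial, so the paper must invoke \cref{thm:jamesconjsmallwt} (valid only for $p>w$, $w\leq4$), Fayers' weight-$2$ adjustment and $\Ext$ results for $p=2$ (\cref{thm:wt2adjust,thm:wt2ext}), and the Fayers--Tan results in weight $3$ for $p=2,3$ (\cref{thm:wt3adj}), choosing the partitions specifically so that the relevant adjustment columns vanish. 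Your proposal treats this hypothesis as bookkeeping (``track the grading shifts carefully''), but without these characteristic-specific inputs the argument does not get off the ground for $p=2,3$, and indeed weight $3$ is handled in the paper by its own five-case analysis rather than by a removal reduction.

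Second, and more seriously: the search you correctly identify as ``the main obstacle'' provably \emph{fails} in certain blocks, so a strategy resting entirely on locating submatrices from \cref{prop:matrixtrick} cannot be completed. When $e=3$ and $p=2$, the weight-$2$ block with core $(3,1^2)$ and the weight-$3$ Rouquier block with core $(6,4,2^2,1^2)$ admit no submatrix of the required shape (in the first case the Gabriel quiver is a linear chain, so no zigzag affine subquiver exists among the candidate simples), and the paper argues directly: for the weight-$2$ block it computes the basic algebra far enough to exhibit a gentle idempotent truncation $\bbf Q/(\varepsilon^2,\varphi^2)$ that is representation-infinite, and for the weight-$3$ block it constructs explicit Specht-module homomorphisms in the KLR presentation to pin down the socle and heart of $\spe{\la_5}$ and extract an $A^{(1)}_3$ zigzag subquiver. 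Because these exceptional arguments are not statements about decomposition numbers, they cannot be transported by row or runner removal; consequently, for $e=3$, $p=2$ and $w\geq4$ the paper reduces instead to weight-$3$ and weight-$4$ blocks such as $[1,4,7]$ and $[1,4,6]$, where new input is again needed -- the Jantzen sum formula argument of \cref{prop:Rouqblocks}, the $i$-restriction comparison of \cref{lem:rest} to control characteristic-$2$ decomposition numbers in near-Rouquier blocks (\cref{prop:NearRouqblocks}), and conjugation of Scopes classes via the $\#$-automorphism. Without these pieces your plan leaves the entire $e=3$, $p=2$ family, and hence the ``in any characteristic'' claim, unproved.
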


\begin{rem}
In an earlier version of this paper, we needed a result by Bowman and the third author~\cite{bs15}, Fock space theory, cf.~\cite{arikibook}, and a result by the first author and Mathas~\cite[Corollary 3.7]{am04}, to settle several cases in weight four.
In the current version we have developed a more combinatorial method to replace the canonical basis computation, and have merged the original paper by the first and third author with its sequel by the second and third.
\end{rem}

Recall that a full subcategory of an abelian category is a wide subcategory if it is closed under isomorphisms, extensions, kernels and cokernels.
By \cite[Proposition 2.24]{AsaiSemibricks}, we have a bijection between wide subcategories and isomorphism classes of semibricks.
A semibrick is a set of pairwise Hom-orthogonal Schurian modules, and we say that a semibrick is left-finite if the smallest torsion class that contains the semibrick is functorially-finite.
Every semibrick is left-finite if the algebra is Schurian-finite by \cite[Theorem 3.8]{DIJ}.
Taking the bijection between isomorphism classes of left-finite semibricks and isomorphism classes of basic support $\tau$-tilting modules into consideration, we have the following corollary.
 
\begin{cor}
Suppose $e\geq 3$ and that $B$ is a block of $\hhh$.
Then the following are equivalent.
\begin{enumerate}[label=(\roman*)]
\item $B$ is representation-finite.
\item $B$ is Schurian-finite.
\item The category of finite-dimensional $B$-modules has finitely many wide subcategories.
\end{enumerate}
\end{cor}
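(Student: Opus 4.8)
The plan is to treat the three equivalences by separating the genuinely representation-theoretic input from the formal bookkeeping: the equivalence (i) $\Leftrightarrow$ (ii) is exactly the content of \cref{thm:main}, whereas (ii) $\Leftrightarrow$ (iii) is a purely formal consequence of the semibrick/$\tau$-tilting dictionary recalled immediately above the statement. In other words, the representation theory of $\hhh$ enters only through \cref{thm:main}, and everything else is an assembly of known bijections.

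For (i) $\Leftrightarrow$ (ii) I would split on the weight of $B$. If $B$ has weight at least $2$, then \cref{thm:main} gives that $B$ is Schurian-infinite, while $B$ is also representation-infinite (indeed of wild type for $e\geq3$, by the classification of representation type of Erdmann and Nakano~\cite{enreptype}); thus (i) and (ii) both fail. If $B$ has weight $0$ or $1$, then $B$ is representation-finite by~\cite{enreptype}, and representation-finiteness always implies Schurian-finiteness, so (i) and (ii) both hold. In either case the two statements agree, which is precisely the ``in particular'' clause of \cref{thm:main}.

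For (ii) $\Rightarrow$ (iii) I would run the chain of bijections forwards. Schurian-finiteness is equivalent to $\tau$-tilting finiteness by~\cite[Theorem~4.2]{DIJ}, so $B$ admits only finitely many basic support $\tau$-tilting modules, and hence only finitely many left-finite semibricks via the bijection recalled above. By~\cite[Theorem~3.8]{DIJ}, Schurian-finiteness forces \emph{every} semibrick to be left-finite, so in fact $B$ has only finitely many semibricks; the bijection of~\cite[Proposition~2.24]{AsaiSemibricks} then yields only finitely many wide subcategories. For the converse (iii) $\Rightarrow$ (ii) I would use the same bijection in the other direction together with the elementary observation that a single Schurian module $M$ constitutes a semibrick $\{M\}$, the pairwise Hom-orthogonality condition being vacuous for a singleton. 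Distinct isomorphism classes of Schurian modules therefore give distinct semibricks, hence distinct wide subcategories, so if there are only finitely many wide subcategories there can be only finitely many Schurian modules; that is, $B$ is Schurian-finite.

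I do not expect a genuine obstacle here, since every ingredient is already available: the only ``hard'' step is \cref{thm:main} itself, which we are assuming. The one point that requires care is that the $\tau$-tilting bijection a priori controls only the \emph{left-finite} semibricks, so the implication (ii) $\Rightarrow$ (iii) genuinely needs~\cite[Theorem~3.8]{DIJ} to upgrade ``finitely many left-finite semibricks'' to ``finitely many semibricks''. The reverse implication, by contrast, needs no finiteness hypothesis at all and follows from the singleton observation alone.
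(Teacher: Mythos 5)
Your proposal is correct and follows essentially the same route as the paper: (i)\,$\Leftrightarrow$\,(ii) is the ``in particular'' clause of \cref{thm:main} (with weight $\leq 1$ blocks representation-finite by Erdmann--Nakano), and (ii)\,$\Leftrightarrow$\,(iii) is assembled exactly as in the paragraph preceding the corollary, via the bijection of \cite[Proposition~2.24]{AsaiSemibricks} between wide subcategories and semibricks, the upgrade from left-finite semibricks to all semibricks via \cite[Theorem~3.8]{DIJ}, and the bijection with basic support $\tau$-tilting modules. Your singleton-semibrick observation for (iii)\,$\Rightarrow$\,(ii) simply makes explicit a step the paper leaves implicit, and your flagged point of care -- that \cite[Theorem~3.8]{DIJ} is genuinely needed to pass from left-finite semibricks to all semibricks -- matches the paper's stated logic.
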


The paper is organised as follows.
In \cref{sec:background}, we recall various runner-, row-, and column-removal theorems in \cref{subsec:grdec} after we fix notations for Hecke algebras, partitions and abacus displays.
Our method to show Schurian-infiniteness by using graded decomposition numbers is explained in \cref{subsec:reduction}.
We also prove a graded analogue of the Scopes equivalence in \cref{sec:gradedScopes}, which will be of independent interest.
After \cref{sec:gradedScopes}, we exclusively work with graded decomposition numbers -- except for in a few cases -- to 
find partitions that satisfy the assumptions of \cref{prop:matrixtrick}.
As much is known for blocks of weight $2$ and $3$, we give a classification of Schurian-finiteness/infiniteness for those blocks in \cref{sec:wt2,sec:wt3} whenever $e\geq 3$, proving \cref{thm:main} in those cases.
Our approach there largely hinges on the runner-removal result of James and Mathas.
In \cref{sec:highwt}, we assume that $e\geq3$ and focus on blocks of weight greater than or equal to $4$.

\begin{ack}
The first author is partially supported by JSPS Kakenhi grants number 18K03212 and 21K03163.
The second author was supported by an LMS Emmy Noether Fellowship grant which enabled her to visit OIST.
The third author is partially supported by JSPS Kakenhi grant number 20K22316.
The third author thanks Osaka University for their hospitality during his visit, in which this research was initiated.
The first author is grateful for a pleasant stay at OIST in November 2021.
During the visit, we completed most of the work on this paper.
We thank Matt Fayers for making his LaTeX style file for abacus displays publicly available, for pointing out a useful reference within \cite{richardswt2} that we use to characterise \emph{adjacent} partitions in \cref{sec:wt2}, and for helpful conversations when we applied his results to a concrete example.
We also thank Andrew Mathas for providing us with the graded decomposition matrices for various blocks of low rank when $e=3$ and $p>0$ -- these were a great help in proving our result for those blocks.
The third author thanks Eoghan McDowell for help with TikZ.

The first author considered the Schurian-finiteness of block algebras of Hecke algebras two years ago with Ryoichi Kase, Kengo Miyamoto, Euiyong Park, and Qi Wang, trying to find an explicit subquotient algebra and prove its Schurian-infiniteness.
Although unsuccessful in this endeavour, the first author thanks those four colleagues for their time spent together on the project in 2019.
\end{ack}

\section{Background}\label{sec:background}

Throughout, we let $\bbf$ denote an algebraically closed field of characteristic $p\geq 0$.
All our modules are left modules.

\subsection{Partitions}\label{subsec:partitions}

A partition of $n$ is a weakly decreasing sequence of nonnegative integers $\la = (\la_1, \la_2, \dots)$ that sum to $n$.
We write $\la \vdash n$ to mean `$\la$ is a partition of $n$'.
It will be useful to consider each partition to have infinite length, though it will only have finitely many nonzero terms.
Thus we will omit the trailing zeroes when writing $\la$, and always consider $\la_r = 0$ for $r$ large enough.
We will also group together equal terms, so that, for example, we will write $(3,1^2)$ instead of $(3,1,1)$.
We denote the unique partition of $0$ by $\varnothing$.

For $e \in \bbz_{\geq 2}$, we say that a partition $\la$ is \emph{$e$-regular} if it does not have any $e$ nonzero parts being equal, and \emph{$e$-singular} if it does.
For example, $(5,1^2)$ is $3$-regular, but $(4,1^3)$ and $(3,1^4)$ are both $3$-singular.

The \emph{conjugate partition} $\la'$ is defined by 
\[
\la'_i = |\{ j\geq 1 \mid \la_j \geq i\}|.
\]

For a partition $\la$, its \emph{Young diagram} is the set $[\la] = \{(r,c) \in \bbn \times \bbn \mid c \leq \la_r\}$, which we may depict as boxes in the plane, following the English convention, as in the example below.
To each node $A = (r,c) \in [\la]$, we assign the \emph{$e$-residue} $i \in \{0, 1, \dots, e-1\}$ with $i \equiv c-r \pmod e$.

If a node $A \in [\la]$ (respectively $B \notin [\la]$) is such that $[\la]\setminus A$ (respectively $[\la] \cup B$) is a Young diagram for a partition, then we say that $A$ is a \emph{removable node} (respectively $B$ is an \emph{addable node}).
We refer to an addable or removable node $(r,c)$ as an \emph{addable $i$-node} or a \emph{removable $i$-node}, respectively, if $i \equiv c-r \pmod e$.

\begin{eg}
Let $e=4$, and take $\la = (6,4,3,1^2)$.
Then $[\la]$ is drawn below, with the residues written in each node, as well as the residues of addable nodes.
\[
\Yaddables1
\yngres(4,6,4,3,1^2)
\]
\end{eg}

If $\la$ and $\mu$ are partitions of $n$, we say that $\la$ dominates $\mu$, and write $\la \dom \mu$, if $\la_1 + \la_2 + \dots + \la_r \geq \mu_1 + \mu_2 + \dots + \mu_r$ for all $r$, and write $\la \doms \mu$ if $\la \dom \mu$ and $\la \neq \mu$.

\subsection{Hecke algebras}\label{subsec:hecke}

Let $q\in \bbf^\times$.
The Hecke algebra of the symmetric group, denoted by $\hhh$, is the unital associative $\bbf$-algebra generated by $T_1, T_2, \dots, T_{n-1}$ subject to the relations
\begin{alignat*}3
(T_i - q)(T_i + 1) &= 0 \qquad &&\text{ for } i = 1,\dots, n-1;\\
T_i T_j &= T_j T_i \qquad &&\text{ for } 1\leq i,j \leq n-1 \text{ with } |i-j|>1;\\
T_i T_{i+1} T_i &= T_{i+1} T_i T_{i+1} \qquad &&\text{ for } i = 1,\dots, n-2.
\end{alignat*}

An excellent introduction to the representation theory of $\hhh$, which largely parallels that of the symmetric group $\sss$, can be found in \cite{mathas}.
Here, we will briefly recall some key aspects that we will require.

The \emph{quantum characteristic} of $\hhh$ is the smallest positive integer $e$ such that $1 + q + q^2 + \dots + q^{e-1} = 0$, if such an $e$ exists, and we set $e = \infty$ otherwise.

The Hecke algebras are cellular algebras.
For each partition $\la$ of $n$, one may construct the Specht module $\spe\la$, which is also a cell module, with cellular basis given by the \emph{dual Murphy basis} -- see, for example, \cite{durui01,hm10}.
Note that our Specht modules agree with those of Dipper and James~\cite{dj86}.
If $e > n$, $\hhh$ is semisimple, and the set $\{\spe\la \mid \la \vdash n\}$ is a complete set of pairwise non-isomorphic irreducible $\hhh$-modules.
If $e \leq n$, then $\spe\la$ has a simple head, denoted $\D\la$, whenever $\la$ is $e$-regular.
The set $\{\D\la \mid \la \vdash n, \text{ $\la$ is $e$-regular}\}$ is a complete set of pairwise non-isomorphic irreducible $\hhh$-modules when $e < \infty$.

The Specht modules $\spe\la$ can be constructed very explicitly, and have a basis indexed by \emph{standard $\la$-tableaux}.
However, the simple modules $\D\la$ are much harder to explicitly construct, except in special cases.
In general, even the dimensions of $\D\la$ are unknown.

It is well-known that the Specht modules are indecomposable if $e\neq 2$ -- for example, by \cite[Corollary 8.6]{dj91}.
We will abuse notation a little and say that two partitions $\la$ and $\mu$ lie in the same block of $\hhh$ whenever $\spe\la$ and $\spe\mu$ do.
If we remove all length $e$ rim hooks from a Young diagram $[\la]$, then we obtain a Young diagram for its \emph{$e$-core}, i.e.~a partition whose Young diagram has no removable length $e$ rim hooks.
The following result is well-known, and is often referred to as Nakayama's conjecture.

\begin{thmc}{mathas}{Corollary 5.38}
Two partitions of $\hhh$ are in the same block if and only if they have the same $e$-core.
\end{thmc}

Armed with this, we may index the blocks of Hecke algebras by their cores and weights, and will denote by $B(\rho,w)$ the block of $\hhh[|\rho|+ew]$ with core $\rho$ and weight $w$.
More recently, Khovanov and Lauda~\cite{kl09}, and, independently, Rouquier~\cite{rouq}, have introduced \emph{quiver Hecke algebras}, or \emph{KLR algebras}, in order to categorify the negative halves of quantum groups.
Khovanov and Lauda also introduced cyclotomic quotients in their paper, that we refer to as \emph{cyclotomic KLR algebras}, that categorify highest weight irreducible modules over quantum groups~\cite{kk12}.

Importantly for us, Brundan and Kleshchev showed in \cite{bkisom} that if $e = \infty$, $\hhh$ is isomorphic to a level 1 cyclotomic KLR algebra of type $A_\infty$, while if $e<\infty$, $\hhh$ is isomorphic to a level 1 cyclotomic KLR algebra of type $A^{(1)}_{e-1}$.

We will not recall the (long!)~presentation of the cyclotomic KLR algebras, as we will not be directly working with the definition.
For our purposes, it will suffice to note that this framework allows us to study the graded representation theory of $\hhh$, which is further developed in \cite{bk09,bkw11}.
In \cref{subsec:grdec} we will discuss this further.

\subsection{Abacus combinatorics}\label{subsec:abacus}

For a partition $\la$ of $n$, we define its \emph{beta-numbers} to be $\beta_i = \la_i - i$, for all $i\geq 1$.
The $e$-runner abacus is drawn with $e$ infinite vertical runners, with marked positions increasing from left-to-right along successive `rows'.
Our convention is that the 0 position is on the leftmost runner.
For example, if $e=4$, our abacus is marked as follows.

{\scriptsize
\[
\begin{tikzpicture}[scale=.6]
\foreach\x in{0,...,3}{\draw(\x,-2.5)--++(0,5);\draw[dashed](\x,-2.5)--++(0,-1);\draw[dashed](\x,2.5)--++(0,1);}
\foreach\x in{0,...,3}{\draw(\x,0)node[fill=white]{$\x$};};
\foreach\x in{-8,...,-5}\draw(\x+8,2)node[fill=white]{$\x$};
\foreach\x in{-4,...,-1}\draw(\x+4,1)node[fill=white]{$\x$};
\foreach\x in{4,...,7}\draw(\x-4,-1)node[fill=white]{$\x$};
\foreach\x in{8,...,11}\draw(\x-8,-2)node[fill=white]{$\x$};
\end{tikzpicture}
\]
}

The \emph{abacus display} for $\la$ is then obtained by placing a bead in position $\beta_i$, for each $i \geq 1$.
The $e$-core of $\la$ is obtained by stripping off all possible $e$-rim hooks from the Young diagram of $\la$, which translates to pushing all beads up as high as possible on the abacus.

We will adopt the notation and conventions of \cite{fayerswt2data}, which is in turn a sleeker presentation of the results of \cite{richardswt2}.
If $B$ is a block with core $\rho$, we take the abacus display for $\rho$ and define the integers $p_0 < p_1 < \dots < p_{e-1}$ so that each is the position of the lowest bead on one of the runners.
If $B$ is a weight 2 block, then the abacus display for any partition in $B$ is obtained from that for its $e$-core by sliding two beads down one place, or by sliding one bead down two places.

Then, for $0 \leq i \leq j < e$, define
\[
_iB_j = \begin{cases}
1 &\text{if } p_j - p_i < e,\\
0 &\text{if } p_j - p_i > e.
\end{cases}
\]
Collectively, the array $_iB_j$ is the \emph{pyramid} of the weight 2 block corresponding to the $e$-core we started with.
We extend this to all pairs of integers by setting $_iB_j = 0$ if $i<0$ or $j\geq e$, and $_iB_j = 1$ if $i>j$.
Finally, we adopt the shorthand notation $_i0_j$ and $_i1_j$ to mean that $_iB_j = 0$ and $1$, respectively.
In particular, we note that $_i0_e$ for any $i$.

\begin{eg}
Let $e=4$, and let $\rho$ be the 4-core $(2^2)$.
Then the corresponding beta-numbers are $\beta = (1,0,-3,-4,\dots)$, and the corresponding abacus display is below.
\[
\abacus(vvvv,bbbb,bbbb,bbnn,bbnn,nnnn,vvvv)
\]
Note that we have $p_0 = -6$, $p_1 = -5$, $p_2 = 0$, $p_3 = 1$, and thus that $_01_1$, $_00_2$, $_00_3$, $_10_2$, $_10_3$, $_21_3$.
We always have that $_i1_i$, since $p_i - p_i = 0 < e$.
\end{eg}

Next, we introduce notation for the partitions in a weight 2 block.
Number the runners from $0$ to $e-1$ so that runner $i$ contains the marked position $p_i$, recalling that $p_0 < p_1 < \dots < p_{e-1}$.
If the abacus display of $\la$ is obtained from that of its $e$-core by sliding the lowest beads on runners $i$ and $j$ each down one spot, with $i<j$, we denote the partition by $\langle i, j \rangle$.
If it is obtained by sliding the lowest bead on runner $i$ down two spaces, we denote it by $\langle i \rangle$.
Finally, if it is obtained by sliding each of the bottom two beads on runner $i$ down one space, we denote it $\langle i^2 \rangle$.

Thus we may denote any partition in $B$ by one of the above, for some $i$ (and possibly $j$).
These partitions thus index all Specht modules in the weight two block $B$.

\begin{eg}
Continuing our previous example, where $e=4$, and we consider a block $B$ with 4-core $\rho = (2^2)$, and weight $2$.
Then the abacus displays for $\langle 3 \rangle$, $\langle 2 \rangle$, $\langle 1 \rangle$, and $\langle 0 \rangle$ are below, yielding partitions $(10,2)$, $(9,3)$, $(4,3^2,1^2)$, $(3^3,1^3)$, respectively.
\[
\abacus(vvvv,bbbb,bbbb,bbnn,bnnn,nnnn,nbnn,nnnn,vvvv) \qquad \qquad \abacus(vvvv,bbbb,bbbb,bbnn,nbnn,nnnn,bnnn,nnnn,vvvv) \qquad \qquad \abacus(vvvv,bbbb,bbbn,bbnn,bbnb,nnnn,nnnn,nnnn,vvvv) \qquad \qquad \abacus(vvvv,bbbb,bbnb,bbnn,bbbn,nnnn,nnnn,nnnn,vvvv)
\]
The abacus displays for $\langle 2, 3 \rangle$, $\langle 1,3 \rangle$, $\langle 3^2 \rangle$, and $\langle 2^2 \rangle$ are below, yielding partitions $(6^2)$, $(6,2^2,1^2)$, $(6,3^2)$, $(5,3^2,1)$, respectively.
\[
\abacus(vvvv,bbbb,bbbb,bbnn,nnnn,bbnn,nnnn,nnnn,vvvv) \qquad \qquad \abacus(vvvv,bbbb,bbbn,bbnb,bnnn,nbnn,nnnn,nnnn,vvvv) \qquad \qquad \abacus(vvvv,bbbb,bbbb,bnnn,bbnn,nbnn,nnnn,nnnn,vvvv) \qquad \qquad \abacus(vvvv,bbbb,bbbb,nbnn,bbnn,bnnn,nnnn,nnnn,vvvv)
\]
\end{eg}

\subsection{Graded decomposition matrices}\label{subsec:grdec}

Let $\la, \mu \vdash n$, with $\mu$ $e$-regular.
The corresponding \emph{decomposition number} is the multiplicity $d_{\la\mu}^{e,p}(1) = [\spe\la : \D\mu]$ of $\D\mu$ in $\spe\la$.
For a graded $\hhh$-module $D$, let $D\langle d \rangle$ denote the graded shift (by $d$) of the module $D$ -- in other words $D\langle d \rangle_r = D_{r-d}$.
Then the corresponding \emph{graded} decomposition number is the Laurent polynomial
\[
d_{\la\mu}^{e,p}(v) = [\spe\la : \D\mu]_v = \sum_{d\in\bbz} [\spe\la : \D\mu \langle d \rangle] v^d \in \bbn[v,v^{-1}].
\]
It is known that $d_{\la\la}^{e,p}(v) = 1$ and $d_{\la\mu}^{e,p}(v) \neq 0$ only if $\la \domby \mu$ -- this follows, for instance, from the graded cellular basis of Hu--Mathas~\cite[Lemma~2.13, Theorem~6.11 and Section~6.4]{hm10}.

It is also well-known that the \emph{ungraded} decomposition matrices $D_p = (d_{\la\mu}^{e,p}(1))_{\la,\mu}$ and $D_0 = (d_{\la\mu}^{e,0}(1))_{\la,\mu}$ are related by multiplication by the so-called \emph{adjustment matrix}.
For us, we will need the more recent development of the \emph{graded adjustment matrix}, introduced in~\cite[Section~5.6]{bk09}.

By~\cite[Theorem~5.17]{bk09}, $D_p$, the characteristic $p$ graded decomposition matrix, is obtained from the characteristic 0 one $D_0$ by post-multiplication by the lower-unitriangular adjustment matrix, whose entries $a_{\la\mu}(v)$ -- indexed by $\la$ and $\mu$ both $e$-regular partitions of $n$ -- are Laurent polynomials with nonnegative integral coefficients, symmetric in $v, v^{-1}$.
In other words,
\[
d_{\la\mu}^{e,p}(v) = d_{\la\mu}^{e,0}(v) + \sum_{\nu \domsby \mu} d_{\la\nu}^{e,0}(v) a_{\nu\mu}(v).
\]

\begin{lem}\label{lem:charfree}
Suppose that $\la$ and $\mu$ are partitions, with $\mu$ $e$-regular, and that $d_{\la\mu}^{e,p}(1) = d_{\la\mu}^{e,0}(1)$ for a prime $p$.
Then $d_{\la\mu}^{e,p}(v) = d_{\la\mu}^{e,0}(v)$.
\end{lem}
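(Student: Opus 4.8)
The plan is to exploit the graded adjustment formula recalled immediately above,
\[
d_{\la\mu}^{e,p}(v) = d_{\la\mu}^{e,0}(v) + \sum_{\nu \domsby \mu} d_{\la\nu}^{e,0}(v)\, a_{\nu\mu}(v),
\]
together with the crucial positivity of every term appearing on the right-hand side. First I would isolate the correction term
\[
C(v) := \sum_{\nu \domsby \mu} d_{\la\nu}^{e,0}(v)\, a_{\nu\mu}(v) = d_{\la\mu}^{e,p}(v) - d_{\la\mu}^{e,0}(v),
\]
and observe that each graded decomposition number $d_{\la\nu}^{e,0}(v)$ lies in $\bbn[v,v^{-1}]$, while each graded adjustment entry $a_{\nu\mu}(v)$ has nonnegative integer coefficients by~\cite[Theorem~5.17]{bk09}. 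Hence $C(v)$, being a finite sum of products of Laurent polynomials with nonnegative coefficients, itself lies in $\bbn[v,v^{-1}]$.

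The key observation is then that a Laurent polynomial with nonnegative coefficients is the zero polynomial precisely when its value at $v=1$ vanishes, since $C(1)$ is exactly the sum of all the coefficients of $C(v)$. I would therefore specialise the adjustment formula at $v = 1$ and invoke the hypothesis $d_{\la\mu}^{e,p}(1) = d_{\la\mu}^{e,0}(1)$ to deduce that $C(1) = 0$. Combined with the positivity established in the previous step, this forces $C(v) = 0$ identically, and rearranging gives $d_{\la\mu}^{e,p}(v) = d_{\la\mu}^{e,0}(v)$, as required.

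There is no genuine obstacle here beyond correctly marshalling the positivity statements: the entire argument hinges on the fact that nonnegativity of coefficients rules out any cancellation, so that agreement of the two decomposition numbers after specialising $v \mapsto 1$ automatically upgrades to agreement of the full Laurent polynomials. The one point I would state carefully is that the nonnegativity of the $a_{\nu\mu}(v)$ and the nonnegativity of the $d_{\la\nu}^{e,0}(v)$ (the latter from the graded cellular structure, cf.~\cite[Lemma~2.13]{hm10}) are precisely what licenses the passage from $C(1)=0$ to $C(v)=0$.
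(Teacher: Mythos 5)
Your proof is correct and follows essentially the same route as the paper's: both specialise the graded adjustment-matrix formula at $v=1$ and use nonnegativity of the coefficients of $d_{\la\nu}^{e,0}(v)$ and $a_{\nu\mu}(v)$ to conclude that the correction term vanishes identically. Your write-up merely makes explicit the positivity reasoning that the paper's one-line proof leaves implicit.
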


\begin{proof}
Recall that
\[
d_{\la\mu}^{e,p}(v) = d_{\la\mu}^{e,0}(v) + \sum_{\nu \domsby \mu} d_{\la\nu}^{e,0}(v) a_{\nu\mu}(v).
\]
Since $d_{\la\mu}^{e,p}(1) = d_{\la\mu}^{e,0}(1)$, we have that $\sum_{\nu \domsby \mu} d_{\la\nu}^{e,0}(v) a_{\nu\mu}(v) = 0$, and the result follows.
\end{proof}

Our next result will ensure that our calculations for weight 2 and 3 blocks may be characteristic-free, so long as the characteristic is larger than the weight of the block.
For $w=2$, $3$, or $4$, the following is proved in \cite{richardswt2}, \cite{fay08wt3} and \cite{fay07wt4}, respectively.

\begin{thm}\label{thm:jamesconj}
\label{thm:jamesconjsmallwt}
If $B$ is a block of weight $w \leq 4$, then the adjustment matrix is the identity matrix whenever $p>w$.
\end{thm}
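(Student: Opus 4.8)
The statement to prove is \cref{thm:jamesconj}: for a block $B$ of weight $w \leq 4$, the adjustment matrix is the identity whenever $p > w$. This is the James conjecture in the small-weight regime, and the excerpt explicitly attributes the three cases $w=2,3,4$ to Richards, Fayers, and Fayers respectively. So my plan is essentially to assemble the proof by citation, since the individual cases are genuinely hard theorems proved in those separate papers. Let me sketch how I would organise the argument.

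\medskip

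The plan is to reduce the claim about the \emph{graded} adjustment matrix to the classical \emph{ungraded} statement, and then invoke the three cited computations. First I would recall the setup from the paragraph preceding the theorem: the graded adjustment matrix $(a_{\nu\mu}(v))$ is lower-unitriangular with entries that are Laurent polynomials in $v$, symmetric in $v \leftrightarrow v^{-1}$ and with nonnegative integer coefficients, relating $D_p$ to $D_0$ via $d_{\la\mu}^{e,p}(v) = d_{\la\mu}^{e,0}(v) + \sum_{\nu \domsby \mu} d_{\la\nu}^{e,0}(v)\, a_{\nu\mu}(v)$. The key observation is that the adjustment matrix is the identity (as a matrix of polynomials) if and only if its ungraded specialisation at $v=1$ is the identity: since each off-diagonal entry $a_{\nu\mu}(v)$ has nonnegative coefficients, $a_{\nu\mu}(1) = 0$ forces $a_{\nu\mu}(v) = 0$. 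Thus it suffices to prove that the \emph{ungraded} adjustment matrix $(a_{\nu\mu}(1))$ is the identity whenever $p > w$, which is precisely the content of the classical James conjecture for small weight.

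\medskip

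Second, I would dispatch the three weight cases individually. For $w = 2$ the ungraded adjustment matrix being trivial for $p > 2$ follows from Richards's explicit determination of the decomposition numbers of weight $2$ blocks in \cite{richardswt2}, where the characteristic-$p$ decomposition numbers agree with the characteristic-$0$ ones once $p > 2$. For $w = 3$ this is Fayers's result in \cite{fay08wt3}, and for $w = 4$ it is Fayers's result in \cite{fay07wt4}; in each case the cited work establishes that the decomposition matrices in characteristic $p > w$ coincide with those in characteristic $0$, equivalently that the adjustment matrix is the identity. I would state explicitly that each of these references proves the ungraded statement, and that combining them with the first reduction step yields the graded statement uniformly across $w \leq 4$.

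\medskip

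The main obstacle here is genuinely external: the hard content lives entirely inside the three cited papers, each of which requires a substantial and case-specific analysis of decomposition numbers (Richards's pyramid combinatorics for weight $2$, and Fayers's intricate inductive arguments for weights $3$ and $4$). Within the present paper the only real work is the reduction from the graded to the ungraded adjustment matrix, and that reduction is short precisely because of the nonnegativity of the coefficients of $a_{\nu\mu}(v)$. So I would keep the proof brief, making the $v=1$ specialisation argument explicit and then citing \cite{richardswt2,fay08wt3,fay07wt4} for the three weights; the delicate point to state carefully is the nonnegativity-plus-vanishing-at-$1$ implication, which is what licenses passing from the numerical (ungraded) James conjecture to the polynomial (graded) statement.
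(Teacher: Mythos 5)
Your proposal is correct and takes essentially the same route as the paper, which likewise establishes this theorem purely by citing Richards \cite{richardswt2} for $w=2$ and Fayers \cite{fay08wt3,fay07wt4} for $w=3,4$. Your explicit reduction from the graded to the ungraded statement -- using that the entries $a_{\nu\mu}(v)$ have nonnegative coefficients, so $a_{\nu\mu}(1)=0$ forces $a_{\nu\mu}(v)=0$ -- is sound and is precisely the mechanism the paper deploys in \cref{lem:charfree}, so it merely makes explicit what the paper leaves implicit.
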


Note that Low has also extended the above results to $q$-Schur algebras~\cite{lowwt34}.

In weight 2, the characteristic $2$ situation has been solved by Fayers; we will use the following two results in \cref{sec:wt2}, in which we employ the pyramid notation introduced in \cref{subsec:abacus}.

\begin{thmc}{faywt2}{Corollary 2.4}\label{thm:wt2adjust}
Let $\nu$ and $\mu$ be $e$-regular partitions in a weight $2$ block of $\hhh$, and let $p=2$.
Then
\[
a_{\nu\mu}(v) = 
\begin{cases}
1 &\text{ if } \nu = \langle i^2 \rangle, \ \mu =\langle i\rangle, \ _{i-1}0_i \text{ and } _i0_{i+1} \text{ for } 1\leq i \leq e-1;\\
1 &\text{ if } \nu = \langle i^2 \rangle, \ \mu =\langle i, i+1\rangle, \ _{i-1}0_i \text{ and } _i1_{i+1} \text{ for } 1\leq i < e-1;\\
\delta_{\la\mu} &\text{ otherwise.}\\
\end{cases}
\]
\end{thmc}

Recall that the Mullineux map $m$ is a bijection on the set of $e$-regular partitions satisfying
$\D\la \otimes \sgn \cong \D{m(\la)}$ if $e=p$ (so that $\hhh \cong \bbf\sss$), and an analogous statement for the Hecke algebra, corresponding to twisting simple modules by a certain sign automorphism of $\hhh$.
There are several combinatorial algorithms to compute $m$, due to several different authors, but we will not need these here.

The following result will be useful for us later.

\begin{propc}{fay09}{Lemma 3.6 and Proposition 3.7}\label{prop:degrees}
Let $\la$ and $\mu$ be partitions of $n$, and suppose that $\mu$ is $e$-regular.
Then $d_{\la\mu}^{e,0}(v) = 0$ unless $\mu \dom \la \dom m(\mu)'$, where $m$ is the Mullineux map.
Moreover, if $\la = m(\mu)'$, then $d_{\la\mu}^{e,0}(v) = v^w$, where $w$ is the weight of the block containing $\la$ and $\mu$, and $d_{\la\mu}^{e,0}(v)$ has degree at most $w-1$ otherwise.
\end{propc}

In \cite{faywt2}, a partition $\la$ is said to be \emph{adjacent} to an $e$-regular partition $\mu$ if $\mu \dom \la \dom m(\mu)'$ and $|\partial\la - \partial\mu| = 1$, where $\partial$ is defined in \cite[Section 1.4]{faywt2}.
We will not require this definition, and since we only consider situations where both $\la$ and $\mu$ are $e$-regular we will say that $\la$ and $\mu$ are adjacent, without worrying about which is more dominant.
Just above \cite[Theorem~4.4]{richardswt2}, Richards defines a map $\diamond$ on $e$-regular weight two partitions that satisfies $\partial \mu^{\diamond'} = \partial \mu$, and on p399 of loc.~cit., Richards shows that this map agrees with the Mullineux map.
Then by \cite[Theorem~4.4]{richardswt2}, combined with \cref{prop:degrees}, $\la$ and $\mu$ are adjacent if and only if $d_{\la\mu}^{e,0}(v) = v$, where $\mu \doms \la  \doms m(\mu)'$.

\begin{thmc}{faywt2}{Theorem 3.2}\label{thm:wt2ext}
Suppose $p=2$, $B$ is a weight two block and $\la$, $\mu$ are $e$-regular partitions in $B$.
Then:
\begin{itemize}
\item If neither of $\la$ and $\mu$ is of the form $\langle i^2 \rangle$ for some $i$ such that $_{i-1}0_i$, then $\Ext^1(\D\la, \D\mu) \cong \bbf$ if $\la$ and $\mu$ are adjacent, and is trivial otherwise.

\item Suppose $\la = \langle i^2 \rangle$ for some $i$ such that $_{i-1}0_i$.
Then $\Ext^1(\D\la, \D\mu) \cong \bbf$ if $\mu = \langle i \rangle$ with $_i0_{i+1}$ or if $\mu = \langle i, i+1 \rangle$ with $_i1_{i+1}$, and is trivial otherwise.
\end{itemize}
\end{thmc}

\subsection{Runner removal}\label{subsec:runnerem}

Our next result is a runner removal theorem of James and Mathas, though we use a useful reformulation of the statement due to Fayers~\cite[Theorem~2.15]{fay07wt4}.

\begin{thmc}{jm02}{Theorem 3.2}\label{thm:runnerrem}
Suppose $e\geq 3$ and that $\la$ and $\mu$ are partitions of $n$, $\mu$ is $e$-regular, and that we take abacus displays for $\la$ and $\mu$.
Suppose that for some $i$, the last bead on runner $i$ occurs before every unoccupied space on both abacus displays, and define two abacus displays with $e-1$ runners by deleting runner $i$ from the abacus displays of $\la$ and $\mu$.
Let $\la^-$ and $\mu^-$ be the partitions defined by these displays.
If $\mu^-$ is $(e-1)$-regular, then
\[
d^{e,0}_{\la\mu}(v) = d^{e-1,0}_{\la^- \mu^-}(v).
\]
\end{thmc}

\subsection{Row-removal}\label{subsec:rowrem}

The following so-called row- and column-removal theorems will be used in \cref{sec:highwt} for determining that blocks of weight at least 4 are Schurian-infinite.

\begin{thmc}{cmt02}{Theorem 1}\label{thm:rowremFock}
Let $\la = (\la_1, \la_2, \dots)$ and $\mu = (\mu_1, \mu_2, \dots)$.
\begin{enumerate}[label=(\roman*)]
\item If $\la_1 + \la_2 + \dots + \la_r = \mu_1 + \mu_2 + \dots + \mu_r$ for some $r$, and we let

\begin{alignat*}3
\la^{(0)} &= (\la_1, \la_2, \dots, \la_r ), \qquad &&\mu^{(0)} = (\mu_1, \mu_2, \dots, \mu_r),\\
\la^{(1)} &= (\la_{r+1}, \la_{r+2}, \dots ), \qquad &&\mu^{(1)} = (\mu_{r+1}, \mu_{r+2}, \dots),
\end{alignat*}
then $d_{\la\mu}^{e,0}(v) = d_{\la^{(0)}\mu^{(0)}}^{e,0}(v) d_{\la^{(1)}\mu^{(1)}}^{e,0}(v)$.

\item If $\la_1' + \la_2' + \dots + \la_r' = \mu_1' + \mu_2' + \dots + \mu_r'$ for some $r$, and we let

\begin{alignat*}3
\la^{(0)} &= (\min(\la_1,r), \min(\la_2,r), \dots), \qquad &&\mu^{(0)} = (\min(\mu_1,r), \min(\mu_2,r), \dots),\\
\la^{(1)} &= (\max(\la_1-r,0), \max(\la_2-r,0), \dots ), \qquad &&\mu^{(1)} = (\max(\mu_1-r,0), \max(\mu_2-r,0), \dots),
\end{alignat*}
then $d_{\la\mu}^{e,0}(v) = d_{\la^{(0)}\mu^{(0)}}^{e,0}(v) d_{\la^{(1)}\mu^{(1)}}^{e,0}(v)$.
\end{enumerate}
\end{thmc}

\begin{thmc}{Donkin}{4.2(9) and 4.2(15)}\label{thm:rowremDecomp}
If $\la$, $\mu$, $\la^{(0)}$, $\mu^{(0)}$, $\la^{(1)}$, and $\mu^{(1)}$ are as in either case above, then $d_{\la\mu}^{e,p}(1) = d_{\la^{(0)}\mu^{(0)}}^{e,p}(1) d_{\la^{(1)}\mu^{(1)}}^{e,p}(1)$.
\end{thmc}

\begin{rem}
In the special case of $r=1$, this was proved earlier, in \cite[Theorems~1 and 2]{j81}.
\end{rem}

\subsection{Scopes equivalences}\label{subsec:scopes}

In several places, we will make use of certain Morita equivalences between blocks of Hecke algebras, known as Scopes equivalences.
Scopes introduced these for the symmetric groups in~\cite{scopes}, and this theory was easily generalised to the Hecke algebras by Jost~\cite{jost}.

First, note that the definition of abacus that we are using is convenient for our purpose, but usually a truncated version of this abacus is used, where we do not allow the runners to extend infinitely upwards.
In truncating the abacus, we are essentially forcing an abacus display to have finitely many beads.
In this setting, it is common to then use beta-numbers $\beta_i = \la_i - i + r$, for $r\geq \la'_1$, to yield an abacus display with $r$ beads.
We may adjust $r$ in order to give a clean description of the Scopes equivalence.
So a given partition can have many different abacus displays.
When applying the Scopes equivalence, we number the runners of a chosen abacus display $0, 1, \dots, e-1$, so that position $i$ is on runner $i$ for each $i=0,1,\dots,e-1$.

Suppose that $B=B(\rho,w)$ is a block of $\hhh$
, and that for some $i$ an abacus display for $\rho$ (or equivalently, for any partition in $B$) has $k$ more beads on runner $i$ than on runner $i-1$, for some $k\geq w$.
Let $A$ be the block of $\hhh[n-k]$ of weight $w$ and core $\Phi(\rho)$, whose abacus display is obtained from that of $\rho$ by swapping runners $i$ and $i-1$.
We may define this map $\Phi$ in the same way for any partition $\la \in B$.
That is, we swap runners $i$ and $i-1$ of the corresponding abacus display for $\la$, yielding a partition $\Phi(\la) \in A$.

If $i=0$, we actually want to swap runners $0$ and $e-1$, and in doing so we need $k+1$ more beads on runner $0$ than runner $e-1$.
We will favour changing $r$ to avoid the need for this exceptional treatment.

For example, taking (core) partitions $(2^2,1^2)$ and $(2,1^2)$, with $e=3$, we have the following two abacus displays and their truncations.
Choosing $r=9$, we have
\[
\abacus(vvv,bbb,bbn,bbn,bbn,nnn,vvv)
\quad \leftrightarrow \quad
\abacus(lmr,bbb,bbn,bbn,bbn,nnn,vvv)
\]
while $r=10$ gives us the below abacus display, for which we may apply the map $\Phi$ as depicted.
\[
\abacus(vvv,bbb,bbb,nbb,nbb,nnn,vvv)
\quad \leftrightarrow \quad
\abacus(lmr,bbb,bbb,nbb,nbb,nnn,vvv)
\qquad \xrightarrow[]{\phantom{aa} \Phi \phantom{aa}} \qquad
\abacus(lmr,bbb,bbb,bnb,bnb,nnn,vvv)
\quad \leftrightarrow \quad
\abacus(vvv,bbb,bbb,bnb,bnb,nnn,vvv)
\]

Scopes showed that this $\Phi$ is a bijection from $B$ to $A$, that maps $e$-regular partitions to $e$-regular partitions.
We say that such a pair of blocks \emph{forms a $[w:k]$ pair}.
Note that many authors do not assume that $k\geq w$ when using this terminology, unlike in the present paper.

\begin{thmc}{jost}{Theorem 7.3}\label{thm:scopes}
If $B$ and $A$ are blocks of $\hhh$ and $\hhh[n-k]$ as above, forming a $[w:k]$ pair for $k\geq w$, then they are Morita equivalent via $\D\la \leftrightarrow \D{\Phi(\la)}$ and $\spe\la \leftrightarrow \spe{\Phi(\la)}$.
In particular, if $\la$ and $\mu$ are partitions of $n$, with $\mu$ $e$-regular, then for any $p\geq0$, $d_{\la\mu}^{e,p}(1) = d_{\Phi(\la)\Phi(\mu)}^{e,p}(1)$.
\end{thmc}

\subsection{Reduction theorem and Jantzen filtration}\label{subsec:reduction}

\begin{prop}\label{reduction}
Let $A$ be a Schurian-finite algebra.
\begin{enumerate}[label=(\roman*)]
\item
If $B$ is a factor algebra of $A$, then $B$ is Schurian-finite.
\item
If $B=eAe$, for an idempotent $e\in A$,  then $B$ is Schurian-finite.
\end{enumerate}
\end{prop}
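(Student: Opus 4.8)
The plan is to exhibit, in each case, a fully faithful functor from $B\text{-mod}$ into $A\text{-mod}$ that preserves the Schurian property and is injective on isomorphism classes. Finiteness of the Schurian $A$-modules then immediately bounds the Schurian $B$-modules, giving the result. All algebras and modules remain finite-dimensional throughout, since $A/I$ and $eAe$ are finite-dimensional.

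For (i), write $B = A/I$ and let $\iota : B\text{-mod} \to A\text{-mod}$ be inflation along the surjection $A \twoheadrightarrow B$. If $M$ and $N$ are $B$-modules, then every $\bbf$-linear map $M \to N$ commuting with the $A$-action automatically commutes with the $B$-action, and conversely, so $\Hom_A(M,N) = \Hom_B(M,N)$; in particular $\End_A(M) = \End_B(M)$. Thus $\iota$ is fully faithful, sends Schurian modules to Schurian modules, and is injective on isomorphism classes, and (i) follows at once.

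For (ii), set $B = eAe$ and consider the functor $G = Ae \otimes_B (-) : B\text{-mod} \to A\text{-mod}$, which is left adjoint to $F = \Hom_A(Ae, -) \cong e(-)$. The key computation is that $e(Ae \otimes_B N) \cong eAe \otimes_B N = B \otimes_B N \cong N$ naturally in $N$, so the unit $N \to FGN$ of the adjunction is an isomorphism; equivalently, by the tensor--hom adjunction, $\Hom_A(GN, GN') \cong \Hom_B(N, FGN') \cong \Hom_B(N, N')$. Hence $G$ is fully faithful, so $\End_A(GN) \cong \End_B(N)$ and $G$ carries Schurian modules to Schurian modules; being fully faithful it reflects isomorphisms and is injective on isomorphism classes, and since $FG \cong \Id$ it sends nonzero modules to nonzero modules. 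Therefore distinct Schurian $B$-modules yield distinct Schurian $A$-modules, and (ii) follows. Finite-dimensionality is clear, as $Ae$ is a finite-dimensional subspace of $A$, so $GN$ is a quotient of the finite-dimensional space $Ae \otimes_\bbf N$.

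The only real content lies in the adjunction computation in (ii); everything else is formal. The point to verify carefully is the natural isomorphism $e(Ae \otimes_B N) \cong N$, which rests on the identity $e(Ae) = eAe = B$ together with the fact that left multiplication by $e$ picks out the $B$-part. Once this is in hand, full faithfulness of $G$ and preservation of endomorphism algebras are automatic. Alternatively, one could invoke the Demonet--Iyama--Jasso equivalence between Schurian-finiteness and $\tau$-tilting finiteness, combined with the known stability of $\tau$-tilting finiteness under quotients and idempotent truncations, but the direct argument above is shorter and self-contained.
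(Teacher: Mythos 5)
Your proof is correct. It is worth noting how it relates to the paper's treatment: the paper disposes of (i) as obvious (implicitly the same inflation argument you give, since $\Hom_A(M,N)=\Hom_B(M,N)$ for modules inflated along $A\twoheadrightarrow B$) and proves (ii) purely by citation to \cite[Proposition~2.4]{Ad2}, whereas you give a self-contained argument via the adjoint pair $G=Ae\otimes_B(-)$ and $F=e(-)$ and the recollement-style isomorphism $FG\cong\Id$. That buys independence from the literature at the cost of one step that deserves slightly more care than your phrase ``left multiplication by $e$ picks out the $B$-part'': the natural surjection $eAe\otimes_B N\to e(Ae\otimes_B N)$ is injective because $Ae=eAe\oplus(1-e)Ae$ as \emph{right} $B$-modules, so tensoring with $N$ preserves the splitting and $e$ annihilates the summand $(1-e)Ae\otimes_B N$ while acting as the identity on $eAe\otimes_B N\cong B\otimes_B N\cong N$. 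With that decomposition made explicit, the unit of the adjunction is an isomorphism, $G$ is fully faithful, and everything else in your argument (preservation of $\End$, injectivity on isomorphism classes, finite-dimensionality of $GN$) is formal, exactly as you say. Your closing remark that one could instead route through the Demonet--Iyama--Jasso equivalence and known stability of $\tau$-tilting finiteness under quotients and idempotent truncation is essentially the shape of the paper's own (outsourced) proof, so your direct argument is the genuinely new content here.
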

\begin{proof}
(i) is obvious.
(ii) follows from \cite[Proposition~2.4]{Ad2}.
\end{proof}

\begin{cor}\label{cor:Schurianinfinitequiver}
If the Gabriel quiver of a finite-dimensional algebra $A$ over $\bbf$ contains the quiver of an affine Dynkin diagram with zigzag orientation (i.e.~such that every vertex is a sink or a source) as a subquiver, then $A$ is Schurian-infinite.
\end{cor}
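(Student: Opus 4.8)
The plan is to reduce, via \cref{reduction}, from the large algebra $A$ down to the path algebra of the affine Dynkin quiver itself, whose Schurian-infiniteness is classical. Since both Schurian-finiteness ($=\tau$-tilting finiteness) and the Gabriel quiver are Morita invariant, I may assume that $A$ is basic, so that $A = \bbf Q_A/I$ where $Q_A$ is its Gabriel quiver and $I \subseteq \Rad^2(A)$ is an admissible ideal. Write $Q$ for the given affine Dynkin subquiver with zigzag orientation, and let $S$ be its set of vertices.

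First I would pass to an idempotent truncation. Let $e = \sum_{i \in S} e_i$ be the sum of the primitive idempotents indexed by $S$, and set $B = eAe$; by \cref{reduction}(ii) it suffices to show that $B$ is Schurian-infinite. The purpose of this step is that the Gabriel quiver $Q_B$ of $B$ contains the full subquiver of $Q_A$ on the vertex set $S$, and hence contains $Q$. To see this, recall that $\Rad(B) = e\Rad(A)e$, so that $\Rad^2(B) = e\Rad(A)e\Rad(A)e \subseteq \Rad^2(A)$. Now each arrow $\alpha\colon i \to j$ of $Q_A$ with $i,j \in S$ satisfies $\alpha = e_j\alpha e_i \in e\Rad(A)e = \Rad(B)$, while $\alpha \notin \Rad^2(A) \supseteq \Rad^2(B)$ because $I$ is admissible. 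Thus $\alpha$ determines a nonzero class in $\Rad(B)/\Rad^2(B)$, and distinct such arrows give linearly independent classes, so every arrow of $Q$ appears in $Q_B$.

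Next I would realise $\bbf Q$ as a factor algebra of $B$, and here the zigzag hypothesis is essential. Since $Q$ has no path of length two, $\Rad^2(\bbf Q) = 0$, so $\bbf Q$ is radical-square-zero. Consider $B/\Rad^2(B)$, the radical-square-zero algebra on the quiver $Q_B \supseteq Q$. Because products of two arrows vanish in this algebra, the two-sided ideal generated by any set of arrows is simply their linear span; quotienting by the ideal generated by all arrows of $Q_B$ not belonging to a fixed copy of $Q$ (together with any loops) therefore yields exactly the radical-square-zero algebra on $Q$, which is $\bbf Q$. Hence $\bbf Q$ is a factor algebra of $B/\Rad^2(B)$, and so of $B$; by \cref{reduction}(i) it now suffices to prove that $\bbf Q$ is Schurian-infinite.

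Finally, $\bbf Q$ is the path algebra of an acyclic quiver whose underlying graph is an affine Dynkin diagram, so by Gabriel's theorem it is hereditary and representation-infinite. Its preprojective component then contains infinitely many pairwise non-isomorphic indecomposables, each of which is Schurian by the second example of the introduction (see \cite[Chap.~VIII, Lemma~2.7]{ASS1}); thus $\bbf Q$ is Schurian-infinite, which completes the argument. The only delicate point is the persistence of the arrows of $Q$ in the Gabriel quiver of the truncation $B = eAe$, which the radical computation above secures; the role of the zigzag orientation is precisely to force $\bbf Q$ to be radical-square-zero, so that it can be realised as a factor of $B/\Rad^2(B)$.
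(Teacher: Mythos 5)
Your proof is correct and follows essentially the same route as the paper: reduce to a basic algebra, truncate by the idempotent $e$ supported on the subquiver's vertices, surject $eAe$ onto the radical-square-zero path algebra $\bbf Q$ (which is where the zigzag orientation enters), and conclude via the Schurian preprojective modules of $\bbf Q$ together with \cref{reduction}. The only difference is that you spell out the step the paper leaves implicit — verifying via $\Rad(eAe)=e\Rad(A)e$ that the arrows of $Q$ survive the truncation and that $\bbf Q$ is a genuine factor of $eAe/\Rad^2(eAe)$ — which is a welcome addition rather than a deviation.
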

\begin{proof}
We may assume that $A$ is a basic algebra without loss of generality.
Then, $A\cong \bbf Q/I$, for a finite quiver $Q$ and an admissible ideal $I\subseteq \bbf Q$.
Let $e\in A$ be the sum of the idempotents associated with the vertices of the subquiver.
Then, since the path algebra of the subquiver we consider here is radical square zero, we have a surjective algebra homomorphism from $eAe$ to the path algebra of the subquiver.
The latter has preprojective and preinjective components with infinitely many vertices, by \cite[Chap.~VIII, Corollary~2.3]{ASS1}, each of which are Schurian, by \cite[Chap.~VIII, Lemma~2.7]{ASS1}.
Finally, \cref{reduction} implies that $A$ is Schurian-infinite.
\end{proof}

The following is Shan's theorem \cite[Theorem~0.1]{ShanJantzenfilt}.

\begin{thmc}{ShanJantzenfilt}{Theorem~0.1}\label{thm:Shan}
Suppose that $q=\exp(-2\pi i/e)\in \bbc$ with $e\geq 3$.
Let $\la, \mu$ be partitions of $n$ and the modules 
$W(\la')$ and $L(\mu')$ over the $q$-Schur algebra $S_q(n,n)$ are the Weyl module with highest weight $\la'$ and the irreducible module with highest weight $\mu'$, respectively.
Then, the graded decomposition numbers are given by the Jantzen filtration of $W(\la')$ as follows.
\[
d^{e,0}_{\la\mu}(v) = \sum_{i\geq 0} [J^iW(\la')/J^{i+1}W(\la'):L(\mu')] v^i
\]
\end{thmc}

Furthermore, as is pointed out in \cite[Remark~6.6]{ShanJantzenfilt}, the radical filtration of $W(\la')$ coincides with the Jantzen filtration 
by the dual statement of \cite[Lemma~5.2.2]{BB93} and \cite[Proposition~5.5]{ShanJantzenfilt} which proves that the geometric Jantzen filtration gives rise to the Jantzen filtration of $W(\la')$.
The assumption of \cite[Lemma~5.2.2]{BB93} for finite-dimensional Schubert varieties is known to hold.
In particular, if $[J^1W(\la')/J^{2}W(\la'):L(\mu')] \neq 0$, then $W(\la')$ has a quotient that is a uniserial module of length two whose head is $L(\la')$ and whose socle is $L(\mu')$.

\begin{lem}\label{lem:uniserial in char=0}
Suppose that $q=\exp(-2\pi i/e)\in \bbc$ with $e\geq 3$ as above.
If $\la, \mu$ are $e$-regular partitions of $n$ and the coefficient of $v$ in $d^{e,0}_{\la\mu}(v)$ is nonzero,
then $\Ext^1(\D\la, \D\mu) = \Ext^1(\D\mu,\D\la)\neq 0$.
\end{lem}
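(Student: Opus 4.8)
The plan is to deduce extensions between simple $\hhh$-modules from the uniserial structure of Weyl modules over the $q$-Schur algebra $S_q(n,n)$, using the Schur functor to transport the information. First I would invoke \cref{thm:Shan}: since the coefficient of $v$ in $d^{e,0}_{\la\mu}(v)$ is nonzero, we have $[J^1 W(\la')/J^2 W(\la') : L(\mu')]\neq 0$, and by the remark following that theorem (that the radical filtration of $W(\la')$ agrees with its Jantzen filtration), the Weyl module $W(\la')$ admits a quotient that is uniserial of length two with head $L(\la')$ and socle $L(\mu')$. This immediately produces a nonzero element of $\Ext^1_{S_q(n,n)}(L(\la'),L(\mu'))$, since a non-split length-two extension of $L(\mu')$ by $L(\la')$ exists inside $W(\la')$.

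\textbf{Passing through the Schur functor.} Next I would apply the Schur functor $f\colon S_q(n,n)\textup{-mod}\to\hhh\textup{-mod}$, which sends $W(\la')$ to the dual Specht module and $L(\mu')$ to $\D{\mu}$ (with appropriate conjugation/duality conventions to match the indexing $\mu'\leftrightarrow\mu$), and is exact. The key point is that for $e$-regular $\la,\mu$ the Schur functor is fully faithful on the relevant simple modules — $L(\la')$ and $L(\mu')$ are not killed — so the length-two uniserial quotient of $W(\la')$ maps to a length-two uniserial $\hhh$-module with head $\D{\la}$ and socle $\D{\mu}$. This gives $\Ext^1_{\hhh}(\D{\la},\D{\mu})\neq 0$. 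The symmetry $\Ext^1(\D{\la},\D{\mu})\cong\Ext^1(\D{\mu},\D{\la})$ then follows from the fact that $\hhh$ is a (graded) symmetric algebra, so that $\Ext^1$ between simples is symmetric in its two arguments.

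\textbf{Main obstacle.} The hard part will be making the conjugation and duality bookkeeping precise so that the Weyl-module statement over $S_q(n,n)$, which is phrased in terms of the primed partitions $\la',\mu'$ and highest weights, really does transport to a statement about $\D{\la}$ and $\D{\mu}$ (unprimed) under the Schur functor, and in checking that the Schur functor does not annihilate the simple modules in question precisely because $\la$ and $\mu$ are $e$-regular. One must confirm that $L(\mu')$ lies in the image of the Schur functor and that $f(L(\mu'))\cong\D{\mu}$ is nonzero under our conventions; this is where the $e$-regularity hypothesis is essential. Once the dictionary is fixed, the exactness of the Schur functor does the rest mechanically, and the symmetry of $\Ext^1$ is immediate from the symmetric-algebra structure.
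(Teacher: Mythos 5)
Your route is essentially the paper's: invoke \cref{thm:Shan} together with the remark that the Jantzen filtration of $W(\la')$ coincides with its radical filtration, then transport the resulting length-two structure through the Schur functor (with the sign twist $\rrspe{\la'}\cong\spe\la\otimes\sgn$, $L(\mu')\mapsto\D\mu\otimes\sgn$, which is exactly the bookkeeping the paper does). However, one step is justified incorrectly as written. You claim the uniserial length-two quotient of $W(\la')$ maps to a \emph{uniserial} length-two $\hhh$-module because ``the Schur functor is fully faithful on the relevant simple modules.'' The fact that $L(\la')$ and $L(\mu')$ are not annihilated (true, since $\la'$, $\mu'$ are $e$-restricted) only tells you that both composition factors survive; an exact idempotent-truncation functor can perfectly well send a non-split extension to a split one, and full faithfulness on simples does not control $\Ext^1$. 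The correct repair -- and this is precisely the paper's argument -- is that the image of the uniserial quotient is a quotient of $f(W(\la'))\cong\spe\la\otimes\sgn$, which has simple head $\D\la\otimes\sgn$ because $\la$ is $e$-regular and $\D\la$ is the unique head of $\spe\la$; a length-two module with simple head cannot split, so $\D\mu$ sits in the second radical layer of $\spe\la$ and $\Ext^1(\D\la,\D\mu)\neq0$. You have this ingredient in hand but attribute the non-splitting to the wrong mechanism.

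A second, smaller imprecision: the symmetry $\Ext^1(\D\la,\D\mu)\cong\Ext^1(\D\mu,\D\la)$ does not follow merely from $\hhh$ being a symmetric algebra -- for a general symmetric algebra (e.g.\ a modular group algebra) simple modules need not be self-dual and $\Ext^1$ between simples need not be symmetric. What one needs, and what holds here, is that the contragredient duality on $\hhh$-mod induced by the anti-automorphism $T_w\mapsto T_{w^{-1}}$ fixes each simple module $\D\la$ up to isomorphism; applying this duality to a non-split extension of $\D\mu$ by $\D\la$ then produces one of $\D\la$ by $\D\mu$. With these two justifications corrected, your argument agrees with the paper's proof.
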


\begin{proof}
By the remark after \cref{thm:Shan}, $L(\mu')$ appears in $\Rad(W(\la'))/\Rad^2(W(\la'))$.
Since $\mu'$ is $e$-restricted, the Schur functor sends $W(\lambda')$ to the dual Specht module $\rrspe{\la'} \cong \spe\la\otimes\sgn$ and $L(\mu')$ to $\D{\mu}\otimes\sgn$.
That $\D\la$ is the unique head of $\spe\la$ implies that $\D\mu$ appears in the second layer of the radical series of $\spe\la$.
\end{proof}

\begin{prop}\label{prop:matrixtrick}
Suppose that $e\geq3$ and $\bbf$ has characteristic $p\geq 0$.
If a submatrix of the graded decomposition matrix in characteristic $0$ is one of the following matrices, and $d_{\la\mu}^{e,p}(1) = d_{\la\mu}^{e,0}(1)$ holds, for all $e$-regular partitions $\la, \mu$ that label rows of the submatrix, then the block in which those partitions belong is Schurian-infinite.

\begin{multicols}{2}
\begin{equation*}\label{targetmatrix}\tag{\(\dag\)}
\begin{pmatrix}
1\\
v & 1\\
0 & v & 1\\
v & v^2 & v & 1
\end{pmatrix}
\end{equation*}
\begin{equation*}\label{targetmatrix2}\tag{\(\dag'\)}
\begin{pmatrix}
1\\
v & 1\\
v^2 & v & 1\\
v & 0 & v & 1
\end{pmatrix}
\end{equation*}
\begin{equation*}\label{targetmatrix3}\tag{\(\dag''\)}
\begin{pmatrix}
1\\
v & 1\\
v^2 & v & 1\\
v & v^2 & v & 1
\end{pmatrix}
\end{equation*}
\begin{equation*}\label{targetmatrixalt}\tag{\(\ddag\)}
\begin{pmatrix}
1\\
v & 1\\
v & 0 & 1\\
v^2 & v & v & 1
\end{pmatrix}
\end{equation*}
\begin{equation*}\label{targetmatrixaltsquare}\tag{\(\clubsuit\)}
\begin{pmatrix}
1\\
0 & 1\\
v & v & 1\\
v & v & 0 & 1
\end{pmatrix}
\end{equation*}
\begin{equation*}\label{targetmatrixstar}\tag{\(\spadesuit\)}
\begin{pmatrix}
1\\
0 & 1\\
v & v & 1\\
0 & v^2 & v & 1\\
v^2 & 0 & v & 0 &1
\end{pmatrix}
\end{equation*}
\end{multicols}
\end{prop}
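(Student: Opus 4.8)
The plan is to read each displayed submatrix as a fragment of the Gabriel quiver of the block $B$ (or of a suitable idempotent truncation of it), to recognise that fragment as an affine Dynkin diagram carrying a zigzag orientation, and then to apply \cref{cor:Schurianinfinitequiver}. Two mechanisms do all the work and are uniform across the six matrices: turning a coefficient of $v$ into a nonzero $\Ext^1$ between the corresponding simples, and assembling the resulting edges into a zigzag-oriented affine diagram. I would set these up once and then check the six cases.

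The passage to arbitrary characteristic comes first. Since $d^{e,p}_{\la\mu}(1)=d^{e,0}_{\la\mu}(1)$ for every $e$-regular $\la,\mu$ labelling the submatrix, \cref{lem:charfree} upgrades this to $d^{e,p}_{\la\mu}(v)=d^{e,0}_{\la\mu}(v)$, so the displayed graded submatrix is correct in every characteristic. In characteristic $0$ I would then read off extensions directly: by \cref{lem:uniserial in char=0} a nonzero coefficient of $v$ in $d^{e,0}_{\la\mu}(v)$ gives $\Ext^1(\D\la,\D\mu)=\Ext^1(\D\mu,\D\la)\ne0$, that is, an edge between $\la$ and $\mu$ in the Gabriel quiver of $B$, present in both orientations since $\hhh$ is symmetric.

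In positive characteristic \cref{lem:uniserial in char=0} is not available, as it relies on Shan's theorem; this is the main obstacle, and I would get around it by idempotent truncation together with the KLR grading. Let $e\in B$ be the sum of the primitive idempotents attached to the simples indexing the columns and put $A=eBe$. With respect to the graded presentation of $B$, the algebra $A$ is non-negatively graded with semisimple degree-zero part, and each relevant truncated Specht module $e\spe\la$ is generated in degree $0$ with degree-zero part -- hence top -- the simple module $e\D\la$; by the now characteristic-free submatrix, its degree-one composition factors are exactly the $e\D\mu$ for which the coefficient of $v$ in $d_{\la\mu}(v)$ is nonzero. For a module generated in degree $0$ over such an algebra the whole of degree one lies in $\Rad/\Rad^2$, so each simple occurring there forces a nonzero $\Ext^1_A$; thus the same edges appear in the Gabriel quiver of $A$ as appeared for $B$ in characteristic $0$, and again in both orientations because $A$ is symmetric.

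It remains to read off the edge-sets. In $(\dag)$, $(\dag')$ and $(\dag'')$ the entries equal to $v$ yield the edges $\{1,2\},\{2,3\},\{3,4\},\{1,4\}$; in $(\ddag)$ they yield $\{1,2\},\{1,3\},\{2,4\},\{3,4\}$; and in $(\clubsuit)$ they yield $\{1,3\},\{2,3\},\{1,4\},\{2,4\}$ -- in every case a four-cycle, i.e.\ a copy of $\widetilde A_3$. In $(\spadesuit)$ the entries equal to $v$ yield $\{1,3\},\{2,3\},\{3,4\},\{3,5\}$, the star with centre $3$, i.e.\ a copy of $\widetilde D_4$; the $v^2$ entries contribute nothing, as only the coefficient of $v$ is used. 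A four-cycle, being an even cycle, admits a zigzag orientation, and the star admits one with its centre as the unique source, so -- both arrow directions being available on each edge -- we may choose such a zigzag-oriented subquiver. \cref{cor:Schurianinfinitequiver} then gives Schurian-infiniteness of $B$ in characteristic $0$ and of $A$ in characteristic $p$, and in the latter case \cref{reduction}(ii) transfers it back to $B$.
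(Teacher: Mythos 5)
Your characteristic-zero argument is correct and is exactly the paper's: \cref{lem:charfree} makes the graded submatrix characteristic-free, \cref{lem:uniserial in char=0} converts each coefficient of $v$ into $\Ext^1(\D\la,\D\mu)=\Ext^1(\D\mu,\D\la)\neq0$, and your reading of the six edge-sets (four-cycles for the first five matrices, the four-pointed star for $(\spadesuit)$) and the zigzag orientations agrees with the paper's $A^{(1)}_3$/$D^{(1)}_4$ subquivers, feeding into \cref{cor:Schurianinfinitequiver}.

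The gap is in your positive-characteristic mechanism. You assert that the truncation $A=eB_\bbf e$, with its KLR grading, is \emph{non-negatively graded with semisimple degree-zero part}, and that $e\spe\la$ is generated in degree $0$; neither claim is justified, and neither follows from the hypothesis. The cyclotomic KLR algebra has generators of negative degree ($\deg \psi_r e(\mathbf{i})=-2$ when $i_r=i_{r+1}$), so the block is not non-negatively graded as presented, and after passing to a basic algebra the claim amounts to positivity statements about the \emph{graded Cartan matrix} of $eB_\bbf e$ in characteristic $p$ -- equivalently, control of graded decomposition numbers $d^{e,p}_{\nu\la}(v)$ for \emph{all} $\nu$ in the block, since every $\nu$ contributes to $\HOM(P^\mu,P^\la)$. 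The hypothesis of \cref{prop:matrixtrick} only gives ungraded characteristic-independence on the chosen $4\times4$ or $5\times5$ submatrix; the adjustment-matrix entries $a_{\nu\mu}(v)$ elsewhere are merely bar-symmetric Laurent polynomials, so negative-degree or degree-zero morphisms between the chosen projectives, coming from partitions outside your set, are not excluded. (There is also a secondary issue: even granting the grading claims, a truncation $eM$ of a module generated in degree $0$ need not itself be generated in degree $0$.) This is precisely the difficulty the paper's proof circumvents: it never argues gradedly in characteristic $p$, but instead takes the characteristic-zero quotient $\spe\la/\Rad^2(\spe\la)$ -- whose second radical layer is controlled by Shan's theorem (\cref{thm:Shan}) -- reduces it modulo $p$, and uses the \emph{ungraded} equality $d^{e,p}_{\la\mu}(1)=d^{e,0}_{\la\mu}(1)$ to see that each relevant $\D\mu_\bbf$ survives the reduction with multiplicity one while every other composition factor is labelled outside the chosen set; truncating by the idempotent $t$ then kills those extraneous factors, and $\Rad(tB_\bbf t)=t\Rad(B_\bbf)t$ yields $\Ext^1(t\D\la_\bbf,t\D\mu_\bbf)\neq0$. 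To repair your proof you would either need to substitute this modular-reduction step, or prove the non-negativity and degree-zero-semisimplicity of the graded basic algebra of an arbitrary block in characteristic $p$, which is not available.
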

\begin{proof}
Let $B$ denote the block in characteristic $0$ in which the four or the five partitions that label the rows and columns of the submatrix belong, and let $B_\bbf$ denote the corresponding block in characteristic $p$.
Denote the $e$-regular partitions by $\la^{(i)}$, for $1\leq i\leq 4$ or $1\leq i\leq 5$.
Then, \cref{lem:uniserial in char=0} implies that we have an $A^{(1)}_3$ quiver (square) or a $D^{(1)}_4$ quiver (4-pointed star) with zigzag orientation as a subquiver of the Gabriel quiver of $B$.
By \cref{lem:charfree}, $d_{\la\mu}^{e,p}(v) = d_{\la\mu}^{e,0}(v)$.
Let $\D\mu$ and $\D\mu_\bbf$ be the simple $B$-module and the simple $B_\bbf$-module labeled by $\mu$, respectively.
We denote by $\spe\la_\bbf$ the Specht $B_\bbf$-module labeled by $\la$.
Now we consider the modular reduction of $\D\la/\Rad^2(\D\la)$ as a factor module of $\spe\la_\bbf$.
If $d_{\la\mu}^{e,0}(v) = v^2$, then $[\spe\la/\Rad^2(\spe\la):\D\mu] = 0$.
Then, $d_{\la\mu}^{e,p}(1) = d_{\la\mu}^{e,0}(1)\in \{0,1\}$ implies that $\D\mu_\bbf$ does not appear in the modular reduction of $\D\la/\Rad^2(\D\la)$ as a composition factor.

The same argument shows that $\D\la_\bbf$ appears with multiplicity $1$ as the unique head of the modular reduction of $\D\la/\Rad^2(\D\la)$, and $\D\mu_\bbf$ appears 
with multiplicity $1$ as one of the composition factors of the modular reduction if $d^{e,0}_{\la\mu}(v) = v$, 
and all the other composition factors of the modular reduction are $\D\nu_\bbf$ where $\nu$ is not among the four or five partitions $\la^{(i)}$.
Therefore, we may obtain an indecomposable $B_\bbf$-module that has the unique head $\D\la_\bbf$ and submodules $\D\mu_\bbf$ for $\mu$ with $d^{e,0}_{\la\mu}(v)=v$, and all the other composition factors of the module are $\D\nu_\bbf$ for partitions $\nu$ that are not among those we have labelled by $\la^{(i)}$.

Let $P_\bbf^\mu$ be the projective cover of $\D\mu_\bbf$ and let $t$ be the sum of the idempotents in the basic algebras of $B_\bbf$ that are projectors to $P_\bbf^{\la^{(i)}}$, summing over all $i$.
Then $t$ kills every simple module $\D\nu_\bbf$ that is not labelled by some $\la^{(i)}$.
Since $\Rad(t{B_\bbf}t)=t\Rad(B_\bbf)t$, it follows that we have 
$\Ext^1(t\D\la_\bbf, t\D\mu_\bbf)\ne0$, for simple $t{B_\bbf}t$-modules $t\D\la_\bbf$ and $t\D\mu_\bbf$ with $d^{e,0}_{\la\mu}(v) = v$.
This implies that the Gabriel quiver of $t{B_\bbf}t$ contains either $A^{(1)}_3$ or $D^{(1)}_4$ quiver with zigzag orientation, so that the Gabriel quiver of $B_\bbf$ contains one of them as a subquiver.
We apply \cref{cor:Schurianinfinitequiver} to conclude that $B_\bbf$ is Schurian-infinite.
\end{proof}

\section{Graded Scopes Equivalence}\label{sec:gradedScopes}

We consider the Scopes equivalence (c.f.~\cref{subsec:scopes}) in the graded setting.
Suppose that $B$ and $A$ form a $[w : k]$ pair, and consider its graded version, which we denote by  the cyclotomic KLR algebras $R^{\La_0}(\beta)$ and $R^{\La_0}(\beta-k\alpha_i)$, respectively.
Throughout this section we assume that either $p \nmid e$ or $p = e$, so that there are blocks of some Hecke algebras that $R^{\La_0}(\beta)$ and $R^{\La_0}(\beta-k\alpha_i)$ are isomorphic to.

We may define the Scopes restriction and induction functors in the graded setting to be the (graded) cyclotomic divided powers $e_i^{(k)}$ and $f_i^{(k)}$ (c.f.~\cite[Section 4.6]{bk09}), but we take a different approach.

Recall that if a pair of blocks $B$ and $A$ forms a $[w:k]$ pair, then the ungraded Scopes restriction functor is the composition of four functors \cite{jost}, and that the ungraded Scopes induction functor is its adjoint functor.
We may consider the graded version of the Scopes restriction functor.

Define $\mathcal{N}_k$ to be the graded algebra with generators $u_j$ with $\deg u_j=-2$, $1\leq j\leq k-1$, obeying $u_j^2=0$ and the braid relations.
Let $\bbf[x_1,\dots,x_k]$ be the polynomial ring with degree given by $\deg x_a=2$.
Let $e_j$, for $1\leq j\leq k$, be the elementary symmetric polynomials in $x_1,\dots,x_k$, 
and $I = (e_1,\dots,e_k)$ the ideal generated by them.
Then, $\mathcal{N}_k\otimes \bbf[x_1,\dots,x_k]$, with the relations
\[
u_jx_a = x_au_j\;(a\neq j,j+1), \quad u_jx_{j+1}-x_ju_j=1=x_{j+1}u_j-u_jx_j
\]
is a graded algebra that is isomorphic to $R(k\alpha_i)$.
We denote by $Y$ the $R(k\alpha_i)$-module realised on $\bbf[x_1,\dots,x_k]/I$, where $x_a$ acts by multiplication and $u_j$ acts as the divided difference $\partial_i = (x_{i+1} - x_i)^{-1}(1-s_i)$.
The graded $R(k\alpha_i)$-module $Y$ is the unique irreducible graded $R(k\alpha_i)$-module up to shift.

Let $\hhh[k]$ be the subalgebra of $R(k\alpha_i)$ generated by $T_i =  u_i x_i - q x_i u_i$, for $1\leq i\leq k-1$.
We consider $\hhh[k]$ as a graded algebra concentrated in degree $0$, and may view any $R(k\alpha_i)$-module as a graded $\hhh[k]$-module by restriction.
Then, $Y$ is a graded $\hhh[k]$-module whose degree zero component is isomorphic to the sign module $\spe{(1^k)}$.
We denote by $P$ the projective cover of the $\hhh[k]$-module $\spe{(1^k)}$ concentrated in degree zero.

Let $e_{\beta-k\alpha_i, k\alpha_i}\in R^{\La_0}(\beta)$ be the sum of the idempotents $e(\res(T)*i^k)$, where $T$ runs through standard tableaux of partitions of $n-k$ which belong to the block $A$.
Then, for a graded $R^{\La_0}(\beta)$-module $M$, $e_{\beta-k\alpha_i,k\alpha_i} M$ is a graded $R^{\La_0}(\beta-k\alpha_i)\boxtimes \hhh[k]$-module.

\begin{defn}\label{def:grscopes}
The graded Scopes functor is the functor from the category of finite-dimensional graded $R^{\La_0}(\beta)$-modules to the category of  finite-dimensional graded $R^{\La_0}(\beta-k\alpha_i)$-modules given by
\[
M \longmapsto \Hom_{\hhh[k]}(P, e_{\beta-k\alpha_i,k\alpha_i} M),
\]
where homomorphism are degree preserving homomorphisms.
This is an exact functor. 
\end{defn}

Since $w\leq k$, every partition $\la$ that belongs to $R^{\La_0}(\beta)$ satisfies $\varepsilon_i(\la)=k$, as was proved in the proof of \cite[Lemma~2.1]{scopes}, so that the graded Scopes functor is a subfunctor of $e_i^{\rm max}=e_i^k$.
The correspondence $\la\mapsto \Phi(\la)$ is nothing but $\la\mapsto \tilde{e}_i^{\rm max}\la=\tilde{e}_i^{k}\la$.

\begin{prop}\label{prop:grScopes}
Suppose $B$ and $A$ form a $[w:k]$ pair.
\begin{enumerate}[label=(\roman*)]
\item
The graded Scopes functor sends graded Specht module $\spe\la$ to $\spe{\Phi(\la)}$, for partitions $\la$.

\item
The graded Scopes functor sends graded simple module $\D\la$ to $\D{\Phi(\la)}$, for $e$-regular partitions $\la$.

\item
If we forget the grading, the graded Scopes functor coincides with the ungraded Scopes functor defined in \cite{jost}.

\item
$d^{e,p}_{\la\mu}(v) = d^{e,p}_{\Phi(\la)\Phi(\mu)}(v)$, for any $p\geq0$.

\item
The category of finite-dimensional graded $B$-modules is category equivalent to the category of finite-dimensional graded $A$-modules.
\end{enumerate}
\end{prop}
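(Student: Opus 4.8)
The plan is to establish parts (i)--(iii) first, since the remaining claims (iv) and (v) follow formally from them, and then deduce those two. First I would analyse the structure of the graded Scopes functor $M \mapsto \Hom_{\hhh[k]}(P, e_{\beta-k\alpha_i,k\alpha_i}M)$. The key observation is that $P$ is the projective cover of the sign module $\spe{(1^k)}$, and that $Y$, the unique irreducible graded $R(k\alpha_i)$-module, has degree-zero component isomorphic to $\spe{(1^k)}$ as an $\hhh[k]$-module. The functor $e_{\beta-k\alpha_i,k\alpha_i}$ picks out, from a graded $R^{\La_0}(\beta)$-module, the part living on standard tableaux whose residue sequence ends in $i^k$ (coming from partitions in $A$), and this lands in the category of graded $R^{\La_0}(\beta-k\alpha_i)\boxtimes \hhh[k]$-modules. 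Applying $\Hom_{\hhh[k]}(P,-)$ then strips off the $\hhh[k]$-factor, leaving a graded $R^{\La_0}(\beta-k\alpha_i)$-module.

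For part (iii) I would argue that forgetting the grading, each of the three pieces ($e_{\beta-k\alpha_i,k\alpha_i}$, the $\hhh[k]$-restriction, and $\Hom_{\hhh[k]}(P,-)$) recovers the corresponding step in Jost's four-functor composition defining the ungraded Scopes restriction functor \cite{jost}. The idempotent truncation $e_{\beta-k\alpha_i,k\alpha_i}$ corresponds to projection onto the relevant block after restriction of residues; the passage through $\hhh[k]$ and the $\Hom$ against the projective cover of the sign module implements exactly Jost's use of Specht/sign-module data. Since the condition $w \leq k$ forces $\varepsilon_i(\la) = k$ for every $\la$ in $B$ (as recalled just before the proposition, following \cite{scopes}), the graded Scopes functor is a subfunctor of $e_i^{\max} = e_i^{k}$, so that on objects it realises $\la \mapsto \tilde{e}_i^{\max}\la = \Phi(\la)$. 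Establishing (iii) reduces (i) and (ii) to identifying the grading: ungradedly we already know from \cref{thm:scopes} that Specht modules map to Specht modules and simples to simples under $\Phi$. For (i), I would track the grading explicitly: since $Y$ has its sign module in degree zero and $P$ is concentrated appropriately, the functor is degree-preserving, so the graded Specht module $\spe\la$ maps to $\spe{\Phi(\la)}$ on the nose with no grading shift; (ii) follows because a graded exact functor sending Specht modules to Specht modules compatibly with the unitriangular graded decomposition data must send the graded head $\D\la$ to $\D{\Phi(\la)}$.

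Given (i)--(iii), part (iv) is immediate: the graded Scopes functor is exact and degree-preserving, so it induces an isomorphism on graded Grothendieck groups sending $[\spe\la]$ to $[\spe{\Phi(\la)}]$ and $[\D\mu]$ to $[\D{\Phi(\mu)}]$; comparing the graded multiplicities $d^{e,p}_{\la\mu}(v) = [\spe\la : \D\mu]_v$ on both sides yields $d^{e,p}_{\la\mu}(v) = d^{e,p}_{\Phi(\la)\Phi(\mu)}(v)$ for every $p \geq 0$. For part (v), I would promote the Morita equivalence of \cref{thm:scopes} to a graded statement: having shown the graded Scopes functor is exact, sends (graded) projectives to projectives and simples to simples bijectively, and is compatible with grading shifts, it is a graded exact functor inducing a bijection on simples, which forces it to be a category equivalence between the categories of finite-dimensional graded $B$- and $A$-modules.

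I expect the main obstacle to be part (iii), and more precisely the careful bookkeeping needed to match the $\Hom_{\hhh[k]}(P,-)$-against-sign-module construction with Jost's original four-functor composition while simultaneously controlling the grading. The subtle point is that the graded structure of $Y$ (and hence of $P$ and of the truncated module) must be shown to interact with the divided-difference action of the $u_j$ in such a way that no unexpected grading shifts appear when one passes to $\spe{\Phi(\la)}$; verifying that $Y$'s sign-module component genuinely sits in degree zero, and that this forces the functor to be strictly degree-preserving rather than merely degree-preserving up to an overall shift, is the technical heart of the argument. Once the degree-zero placement of the sign module in $Y$ and the concentration of $\hhh[k]$ in degree zero are pinned down, the remaining verifications are formal.
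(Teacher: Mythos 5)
Your overall architecture is close to the paper's (the paper also analyses the functor $M\mapsto\Hom_{\hhh[k]}(P,e_{\beta-k\alpha_i,k\alpha_i}M)$ and derives (iv) formally, exactly as you do), but two of your steps have genuine gaps. First, part (i): saying the functor is ``degree-preserving'' does not identify the image. The paper's proof of (i) rests on the nontrivial graded isomorphism $e_{\beta-k\alpha_i,k\alpha_i}\spe\la\cong\spe{\Phi(\la)}\otimes Y$, which is a result of Mathas on graded restriction of Specht modules \cite[Corollary~5.8]{Mathas17}; without this input (or an argument via uniqueness of gradings on indecomposables up to shift, plus a computation pinning the shift to zero, neither of which you supply) the identification of $\Phi_1(\spe\la)$ is unsubstantiated. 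Relatedly, your proof order is backwards relative to the paper and risks circularity: your argument for (iii) asserts that the three constituent steps ``recover the corresponding steps'' of Jost's functor, but the intermediate functors do \emph{not} literally match -- the idempotent $e_{\beta-k\alpha_i,k\alpha_i}$ truncates to the $i^k$-residue component of the restriction, not to the block component Jost uses, and $\Hom_{\hhh[k]}(P,-)$ is not $(\spe{(k)})^*\otimes_{\hhh[k]}-$. The paper proves ${\rm For}\circ\Phi_1\cong\Phi_2\circ{\rm For}$ by induction on a composition series with the five lemma, and that induction uses how simples transform under both functors, i.e.\ part (ii) gradedly and \cite[Corollary~6.3]{jost} ungradedly; so deriving (i) and (ii) \emph{from} (iii), as you plan, cannot be carried out along the lines you sketch.

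Second, and most seriously, part (v): the principle you invoke -- an exact functor compatible with shifts, sending projectives to projectives and inducing a bijection on simples, is ``forced'' to be an equivalence -- is false. Restriction from $\bbf Q$, with $Q\colon 1\to 2$, to $\bbf\times\bbf$ is exact, sends the two simples bijectively to the two simples and projectives to projectives, yet is not an equivalence. What does follow formally from (iii) together with the ungraded equivalence of \cref{thm:scopes} is full faithfulness (compare graded Hom spaces degreewise inside the ungraded Hom), and this is how the paper argues; the genuinely nontrivial half is \emph{density} of the graded functor, which your proposal omits entirely. The paper proves density by induction on length: given a graded $A$-module realised as an extension of $N$ by $\D{\Phi(\la)}$, it transports the extension class through $\Ext^1(N,\D{\Phi(\la)})\cong\Ext^1(L,\D\la)$ (the ungraded Scopes equivalence), builds a graded extension $X$ on the $B$-side, and then -- the key point -- checks that the resulting ungraded isomorphism $X'\cong M$, a priori a sum of components of various degrees, is pure of degree zero because all maps in the two short exact sequences are of degree zero. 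That purity argument is the missing idea in your write-up; your part (iv), by contrast, is correct as stated.
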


\begin{proof}
\begin{enumerate}[label=(\roman*)]
\item
We see that $e_{\beta-k\alpha_i,k\alpha_i} \spe\la\cong \spe{\Phi(\la)}\otimes Y$ by~\cite[Corollary~5.8]{Mathas17}.
Then, $\Hom_{\hhh[k]}(P, Y) = \Hom_{\hhh[k]}(P, \spe{(1^k)}) = \bbf$ and 
\[
\Hom_{\hhh[k]}(P, \spe{\Phi(\la)}\otimes Y)\cong
\spe{\Phi(\la)}\otimes \Hom_{\hhh[k]}(P, Y)\cong \spe{\Phi(\la)}
\]
follows. 

\item
Note that the composition factors of $\spe\la$ are $\D{\mu}$ with $\mu \doms \la$, up to shift.
Since $\la\mapsto \Phi(\la)$ respects the lexicographic order~\cite[Lemma~2.2]{scopes}, and $d^{e,p}_{\la\mu}(1) = d^{e,p}_{\Phi(\la)\Phi(\mu)}(1)$, for any $p\geq0$ \cite[Lemma~5.3]{jost}, 
we have the result for the graded Scopes functor by using (i) and induction on the reverse lexicographic order.

\item
Recall the definition of Jost's Scopes functor.
After restricting an $\hhh$-module to $\hhh[n-k]\boxtimes \hhh[k]$, we tensor it with the ungraded right $\hhh[k]$-module ${\spe{(k)}}^*$.
He showed that the restriction of $\D\la$ is isomorphic to $\D{\Phi(\la)}\boxtimes \hhh[k]$ 
\cite[Corollary~6.3]{jost}.
Hence, the restricted module viewed as an $\hhh[k]$-module is free of finite rank.

\begin{itemize}
\item[(a)]
If we restrict an ungraded $R^{\La_0}(\beta)$-module to $\hhh[n-k]\boxtimes \hhh[k]$, and view it as an $R^{\La_0}(\beta-k\alpha_i)\boxtimes \hhh[k]$-module through the algebra homomorphism
\[
R^{\La_0}(\beta-k\alpha_i)\boxtimes \hhh[k] \hookrightarrow \hhh[n-k]\boxtimes \hhh[k] \hookrightarrow \hhh,
\]
each composition factor $\D\la$ changes to $\D{\Phi(\la)}\otimes \hhh[k]$, and tensoring it with ${\spe{(k)}}^*$ over $\hhh[k]$ has the effect that 
we replace $\hhh[k]$ with $\bbf$.

\item[(b)]
If we restrict a graded $R^{\La_0}(\beta)$-module to $R^{\La_0}(\beta-k\alpha)\boxtimes \hhh[k]$, each composition factor $\D{\la}\langle d\rangle$ changes to $\D{\Phi(\la)}\langle d\rangle\otimes Y$, and taking the space of degree preserving $\hhh[k]$-module homomorphisms from $P$ has the effect that we replace $Y$ with $\bbf$.
\end{itemize}
Comparing (a) and (b), we know that our graded Scopes functor coincides with Jost's, if we forget the grading.
More precisely, we prove the next result.

\begin{quote}
Let $\Phi_1$ be our functor defined in \cref{def:grscopes}, and let $\Phi_2$ be Jost's functor.
Then, ${\rm For}\circ\Phi_1\cong \Phi_2\circ {\rm For}$, where ${\rm For}$ is the forgetful functor.
\end{quote}

\medskip
For any $R^{\La_0}(\beta)$-module $M$, we have the inclusion of graded $R^{\La_0}(\beta-k\alpha_i)$-modules
\[
\Phi_1(M) = (e_{\beta-k\alpha_i,k\alpha_i}M)_0 \longrightarrow M,
\]
where $(e_{\beta-k\alpha_i,k\alpha_i}M)_0$ is the degree $0$ part of $e_{\beta-k\alpha_i,k\alpha_i}M$ with respect to the graded $R(k\alpha_i)$-module structure.
We choose a composition series
\[
0=M_0\subset M_1\subset \cdots \subset M_l=M.
\]
Then, $0 \to M_{l-1} \to M \to \D{\la}\langle d\rangle \to 0$, for some $\la$ and $d$.
We show by induction on the length of modules that the inclusion above induces ${\rm For}\circ\Phi_1(M)\cong\Phi_2\circ {\rm For}(M)$.
Assume that ${\rm For}\circ\Phi_1(M_k)\cong \Phi_2\circ {\rm For}(M_k)$ holds for $1\leq k\leq l-1$, and we restrict the module $M$ to 
$R^{\La_0}(n-k)\boxtimes R(k)$.
Then, we have the commutative diagram of $\hhh[n-k]$-modules
\[
\xymatrix@C=1em{
0 \ar[rr] &&  {\rm For}(e_{\beta-k\alpha_i,k\alpha_i}M_{l-1})_0 \ar[rr] \ar[d] &&  {\rm For}(e_{\beta-k\alpha_i,k\alpha_i}M)_0 \ar[rr] \ar[d] && {\rm For}(e_{\beta-k\alpha_i,k\alpha_i}\D\la\langle d\rangle)_0 \ar[rr] \ar[d] && 0 \\
0 \ar[rr] &&  {\rm For}(M_{l-1}) \ar[rr] \ar[d] &&  {\rm For}(M) \ar[rr] \ar[d] &&  {\rm For}(\D{\la}) \ar[rr] \ar[d] && 0 \\
&& {\spe{(k)}}^*\otimes_{\hhh[k]}  {\rm For}(M_{l-1}) \ar[rr]  \ar[d] && {\spe{(k)}}^*\otimes_{\hhh[k]} {\rm For}(M) \ar[rr] \ar[d] && {S^{(k)}}^*\otimes_{\hhh[k]} ({\rm For}(\D{\Phi(\la)})\boxtimes \hhh[k]) \ar[rr]  \ar[d] && 0 \\
0 \ar[rr] && \Phi_2( {\rm For}(M_{l-1})) \ar[rr] && \Phi_2({\rm For}(M)) \ar[rr] && \Phi_2( {\rm For}(\D\la\langle d\rangle)) \ar[rr] && 0
}
\]
where the first vertical arrow is the inclusion, the third vertical arrow is the block truncation.
Note that ${\rm For}(\D\la)$ restricts to 
${\rm For}(\D{\Phi(\la)})\boxtimes \hhh[k]$.
Hence, we have the commutative diagram
\[
\xymatrix@C=1em{
0 \ar[rr] &&  {\rm For}(\Phi_1(M_{l-1})) \ar[rr] \ar[d] &&  {\rm For}(\Phi_1(M)) \ar[rr] \ar[d] && {\rm For}(\Phi_1(\D\la\langle d\rangle)) \ar[rr] \ar[d] && 0 \\
0 \ar[rr] && \Phi_2( {\rm For}(M_{l-1})) \ar[rr] && \Phi_2({\rm For}(M)) \ar[rr] && \Phi_2( {\rm For}(\D\la\langle d\rangle)) \ar[rr] && 0
}
\]
such that the left vertical arrow and the right vertical arrow are isomorphisms by the induction hypothesis.
By the five lemma, we deduce that ${\rm For}\circ\Phi_1(M)\cong \Phi_2\circ{\rm For}(M)$.

\item
Since $d^{e,p}_{\la\mu}(1) = d^{e,p}_{\Phi(\la)\Phi(\mu)}(1)$, for any $p\geq0$, this follows from (i) and (ii).

\item
We have to show that the graded Scopes functor is fully faithful and dense.
Since the ungraded Scopes functor induces category equivalence, (iii) implies the fully faithfulness.
To see that it is dense, we argue by induction on the length of graded modules.
Suppose that
\[
0 \rightarrow \D{\Phi(\la)} \rightarrow M \rightarrow N \rightarrow 0
\]
and that $L$ maps to $N$ under the graded Scopes functor.
Choose the element in \linebreak
$\EXT^1(N,\D{\Phi(\la)})$ which represents this extension.

Since $\Ext^1(N,D^{\Phi(\la)})\cong \Ext^1(L,D^\la)$ by the ungraded Scopes equivalence~\cite[Theorem~7.3]{jost}, 
we may choose the corresponding element in $\EXT^1(L,D^\la)$ which gives a short exact sequence of graded modules
\[
0 \rightarrow D^\la \rightarrow X \rightarrow L \rightarrow 0.
\]
Then, it induces $0 \rightarrow D^{\Phi(\la)} \rightarrow X' \rightarrow N \rightarrow 0$, 
where $X'$ is the image of $X$ under the graded Scopes functor, and we have an isomorphism $X'\cong M$ if we forget the grading.
This isomorphism is a sum of homomorphisms of various degrees.
However, since the homomorphisms in the short exact sequences are all degree $0$ homomorphisms, this isomorphism is pure of degree $0$.
Hence the graded Scopes functor is a dense functor. \qedhere
\end{enumerate}
\end{proof}

\begin{rem}
Since the indecomposable direct summands of the regular module of $R(k\alpha_i)$ are $\bbf[x_1,\dots,x_k]\langle -2j \rangle$, where $0\leq j\leq k(k-1)/2$, 
and 
\[
f_i^kN = R^{\La_0}(\beta)e_{\beta-k\alpha_i,k\alpha_i}\otimes_{R^{\La_0}(\beta-k\alpha_i)} N
\cong R^{\La_0}(\beta)e_{\beta-k\alpha_i,k\alpha_i}\otimes_{R^{\La_0}(\beta-k\alpha_i)\boxtimes R(k\alpha_i)} N\otimes R(k\alpha_i),
\]
we may define the graded Scopes induction functor by
\[
N \longmapsto R^{\La_0}(\beta)e_{\beta-k\alpha_i,k\alpha_i}\otimes_{R^{\La_0}(\beta-k\alpha_i)\boxtimes R(k\alpha_i)} N\otimes \bbf[x_1,\dots,x_k]\langle -k(k-1)\rangle.
\]
This is an exact functor.
Moreover, \cite[Corollary~4.7]{hm12} implies that the graded Scopes functor sends $\spe{\Phi(\la)}$ to $\spe\la$.

Unlike the Scopes restriction functor, we may generalise Scopes and Jost's induction functor in a straightforward manner.
Namely, we may define
\[
N \longmapsto R^{\La_0}(\beta)e_{\beta-k\alpha_i,k\alpha_i}\otimes_{R^{\La_0}(\beta-k\alpha_i)\boxtimes R(k)} N\otimes \spe{(k)}.
\]
We have not checked whether it coincides with our definition (up to shift) or not.
\end{rem}

\section{Weight 2 blocks}\label{sec:wt2}

In this section, we will prove \cref{thm:main} for weight $2$ blocks.
We will follow the notations and conventions of the note of Fayers~\cite{fayerswt2data}.
This will allow us to work in a great deal of generality.
As we noted in the remarks before the statement of \cref{thm:main}, if $e=2$ and $p\neq 2$, weight $2$ blocks of the Hecke algebra are known to be Schurian-finite, and we are unable to handle the case $e=p=2$.
So we will assume throughout this paper that $e\geq 3$.
Originally, we were able to prove \cref{thm:main} for weight 2 blocks when $p\neq 2$ by applying the results of~\cite{fayerswt2data}, but the proof we present here extends more readily into the weight $3$ case.

By~\cref{thm:jamesconj}, when $w=2$, $d_{\la\mu}^{e,0}(v) = d_{\la\mu}^{e,p}(v)$ for all $p\neq 2$, so we may rely on characteristic 0 results throughout this section, unless $p=2$.
Thus, we also assume that $p\neq 2$ in this section, except where explicitly stated, but otherwise we allow $e$ and $p$ to be arbitrary.

We work across five separate cases, depending on the abacus for the core $\rho$ indexing a given block $B(\rho,2)$, finding the submatrices, (\ref{targetmatrix}), (\ref{targetmatrixalt}), or (\ref{targetmatrixstar}) from \cref{prop:matrixtrick} in each case.
The runner-removal result in \cref{thm:runnerrem} will be key to reducing our necessary computations to known small rank cases.

\subsection{$p_{e-1} - p_{e-3} < e$}\label{subsec:wt2firstcase}

First, suppose that $\rho$ satisfies $p_{e-1} - p_{e-3} < e$, and $p\neq2$.

We will obtain the $4\times4$ submatrix of the graded decomposition matrix for the block, with rows and columns indexed by the partitions $\langle e-1\rangle$, $\langle e-2\rangle$, $\langle e-3\rangle$, and $\langle e-3, e-1\rangle$, and see that it is (\ref{targetmatrix}).

It is clear that for these four partitions, we may apply the runner removal result \cref{thm:runnerrem} to remove all but runners $e-3$, $e-2$, and $e-1$, leaving the abacus configuration of the core as one of those depicted below.
\[
\abacus(vvv,bbb,bbb,nnn,nnn,vvv) \qquad\qquad \abacus(vvv,bbb,bbb,bnn,nnn,vvv) \qquad\qquad \abacus(vvv,bbb,bbb,bbn,nnn,vvv)
\]
We place $p_{e-3}$ on the leftmost runner.
Then,
we may, without loss of generality, work with the first display above.
The corresponding four partitions then become $(6)$, $(5,1)$, $(4,1^2)$, and $(3,2,1)$, from which we may deduce that the corresponding submatrix (for $e=3$) is known to be (\ref{targetmatrix}) -- for instance, see \cite[Appendix~B]{mathas}.
The result now follows.\\

%

Now, if $p=2$, we may directly apply \cref{thm:wt2ext}; note that we do so for our four partitions on the $e$-runner abacus, as we cannot appeal to a runner removal result here.
None of our partitions are of the form $\langle i^2 \rangle$ for any $i$, and $\la$ and $\mu$ are adjacent whenever $d_{\la\mu}^{e,0}(v) = v$, so that we get the same subquiver of the Gabriel quiver of $B$ in characteristic $2$ as characteristic $0$ -- i.e.~we have an $A^{(1)}_3$ quiver (square) with zigzag orientation as a subquiver.

Alternatively, we may apply \cref{thm:wt2adjust}.
Since
\[
d_{\la\mu}^{e,2}(v) = d_{\la\mu}^{e,0}(v) + \sum_{\nu \domsby \mu} d_{\la\nu}^{e,0}(v) a_{\nu\mu}(v),
\]
and $d_{\la\nu}^{e,0}(v) = 0$ unless $\la \domby \nu$, it suffices to note that no $\langle e-3, e-1\rangle \domby \nu \domsby \langle e-1 \rangle$ can be of the form $\langle i^2 \rangle$, and thus that every summand on the right has either $d_{\la\nu}^{e,0}(v) = 0$ or $a_{\nu\mu}(v) = 0$.

In other words, the submatrix we found in other characteristics is identical in characteristic $2$, and we may apply \cref{prop:matrixtrick}.

\subsection{$p_{e-1} - p_{e-2} < e$ and $p_{e-2} - p_{e-3} < e$, but $p_{e-1} - p_{e-3} > e$}\label{subsec:wt2secondcase}

Next, suppose that $\rho$ satisfies $p_{e-1} - p_{e-2} < e$ and $p_{e-2} - p_{e-3} < e$, but $p_{e-1} - p_{e-3} > e$.

We will obtain the $4\times4$ submatrix of the graded decomposition matrix for the block, with rows and columns indexed by the partitions $\langle e-1\rangle$, $\langle e-2\rangle$, $\langle e-2, e-1\rangle$, and $\langle (e-1)^2\rangle$, and see that it is (\ref{targetmatrix}).

As in \cref{subsec:wt2firstcase}, it is clear that for these four partitions, we may apply the runner removal result \cref{thm:runnerrem} to remove all but runners $e-3$, $e-2$, and $e-1$, leaving the abacus configuration of the core as depicted below, where we place $p_{e-3}$ on the leftmost runner.
Then it is easy to see that $p_{e-2}$ cannot be on the middle runner.
\[
\abacus(vvv,bbb,bbb,nbn,nnn,vvv)
\]
The corresponding four partitions then become $(7)$, $(5,2)$, $(4,3)$, and $(4,2,1)$, from which we may deduce that the corresponding submatrix (for $e=3$) is known to be (\ref{targetmatrix}) -- for instance, see \cite[Appendix~B]{mathas}.
The result now follows.\\

%

Now, if $p=2$, we may directly apply \cref{thm:wt2ext} as in \cref{subsec:wt2firstcase} -- only one of our partitions is of the form $\langle i^2 \rangle$ for any $i$ (that is, when $i=e-1$), but we do not have $_{e-2}0_{e-1}$.
Again, $\la$ and $\mu$ are adjacent whenever $d_{\la\mu}^{e,0}(v) = v$, so that we get the same subquiver of the Gabriel quiver in characteristic $2$ as characteristic $0$ -- i.e.~an $A^{(1)}_3$ quiver (square) with zigzag orientation.

Alternatively, we may apply \cref{thm:wt2adjust} as in \cref{subsec:wt2firstcase}.
Now, note that the only $\langle (e-1)^2 \rangle \domby \nu \domsby \langle e-1 \rangle$ of the form $\langle i^2 \rangle$ is $\langle (e-1)^2 \rangle$, and thus that every summand on the right has either $d_{\la\nu}^{e,0}(v) = 0$ or $a_{\nu\mu}(v) = 0$, except the summand with $\nu = \langle (e-1)^2 \rangle$, which only occurs if $\la = \langle (e-1)^2 \rangle$ to.
In this case, if $\mu$ is any one of our four possible partitions, the above becomes
\begin{align*}
d_{\langle (e-1)^2 \rangle\mu}^{e,2}(v) &= d_{\langle (e-1)^2 \rangle\mu}^{e,0}(v) + d_{\langle (e-1)^2 \rangle \langle (e-1)^2 \rangle}^{e,0}(v) a_{\langle (e-1)^2 \rangle \mu}(v)\\
&= d_{\langle (e-1)^2 \rangle\mu}^{e,0}(v) + a_{\langle (e-1)^2 \rangle \mu}(v)\\
&= d_{\langle (e-1)^2 \rangle\mu}^{e,0}(v) \text{ by \cref{thm:wt2adjust}, as $p_{e-1}-p_{e-2}<e$, or $_{e-2}1_{e-1}$}.
\end{align*}
In other words, the submatrix we found in other characteristics is again identical in characteristic $2$.

\subsection{$p_{e-1} - p_{e-2} > e$ and $p_{e-2} - p_{e-3} < e$}\label{subsec:wt2thirdcase}

Next, suppose that $\rho$ satisfies $p_{e-1} - p_{e-2} > e$ and $p_{e-2} - p_{e-3} < e$.

We will obtain the $4\times4$ submatrix of the graded decomposition matrix for the block, with rows and columns indexed by the partitions $\langle e-1\rangle$, $\langle e-2, e-1\rangle$, $\langle e-2\rangle$, and $\langle e-3\rangle$, and see that it is (\ref{targetmatrix}).

As in \cref{subsec:wt2firstcase,subsec:wt2secondcase}, it is clear that for these four partitions, we may apply the runner removal result \cref{thm:runnerrem} to remove all but runners $e-3$, $e-2$, and $e-1$, leaving abacus displays with $e=3$.
We must have either $p_{e-2} = p_{e-3} + 1$ and $p_{e-1} = p_{e-2} + (k-1)e + 1$ or $p_{e-2} = p_{e-3} + 2$ and $p_{e-1} = p_{e-2} + (k-1)e + 2$, for some $k\geq 2$.
The corresponding $k+3$ bead abacus displays are depicted below.
\[
\abacus(vvv,bbb,bnn,bnn,vvv,bnn,bnn,nnn,vvv)
\qquad\qquad
\abacus(vvv,bbb,nbn,nbn,vvv,nbn,nbn,nnn,vvv)
\]
Here, the diagrams have $k$ more beads on the first and second runners, respectively, than on the other two.
Because of this, swapping the $0$- and $1$-runners (i.e.~the leftmost and middle runners) or the $1$- and $2$-runners (i.e.~the middle and rightmost runners) always involves swapping runners on which the number of beads differ by at least $w=2$.

For $k\geq 2$, we let $\rho^{(2k-3)} := (2k-2, 2k-4,\dots, 2)$ denote the partition with abacus display on the left above (satisfying $p_{e-2} = p_{e-3} + 1$ and $p_{e-1} = p_{e-2} + (k-1)e + 1$) and $\rho^{(2k-2)} := (2k-1,2k-3,\dots,1)$ denote the partition with abacus display on the right above (satisfying $p_{e-2} = p_{e-3} + 2$ and $p_{e-1} = p_{e-2} + (k-1)e + 2$)
Then our first few partitions are $\rho^{(1)}=(2)$, $\rho^{(2)}=(3,1)$, $\rho^{(3)}=(4,2)$, $\rho^{(4)}=(5,3,1)$, and $\rho^{(5)}=(6,4,2)$.
It is clear that swapping the $0$- and $1$-runners swaps $\rho^{(2k-3)}$ and $\rho^{(2k-2)}$, while swapping the $1$- and $2$-runners of the above abacus display for $\rho^{(2k-2)}$ yields a $k+3$ bead abacus display for $\rho^{(2k-1)}$.
But $\rho^{(2k-1)}$ has a $(k+1)+3$ bead abacus display as above, so that we can again swap the $0$- and $1$-runners to obtain $\rho^{(2k)}$, and so on.
Then we see that all possible cores in this case are among our partitions $\rho^{(i)}$, and they may be recursively obtained from $\rho^{(1)} = (2)$ by swapping adjacent runners, where one has $k\geq 2 = w$ more beads than the others.
So we may apply \cref{prop:grScopes}, and reduce our computation to the case of the block with core $(2)$.

Notice that when we swap the $0$- and $1$-runners to interchange $\rho^{(2k-3)}$ and $\rho^{(2k-2)}$, this corresponds to performing $(1-k)$-induction or $(1-k)$-restriction on the core (where we take the residue of $1-k$ modulo $3$).
Similarly, when swapping the $1$- and $2$-runners to interchange $\rho^{(2k-2)}$ and $\rho^{(2k-1)}$, this corresponds to performing $(2-k)$-induction or $(2-k)$-restriction on the core.

Thus, we may alternatively characterise our runner swaps above as applying (maximal powers of) Kashiwara operators recursively to the cores, and the partitions in those blocks, as follows.
For $i \geq 0$,
\begin{align*}\label{eq:inducecase3cores}
\begin{split}
\rho^{(6i+2)} = \tilde{f_2}^{3i+2} \rho^{(6i+1)}, \quad \rho^{(6i+3)} &= \tilde{f_0}^{3i+2} \rho^{(6i+2)}, \quad \rho^{(6i+4)} = \tilde{f_1}^{3i+3} \rho^{(6i+3)},\\
\rho^{(6i+5)} = \tilde{f_2}^{3i+3} \rho^{(6i+4)}, \quad \rho^{(6i+6)} &= \tilde{f_0}^{3i+4} \rho^{(6i+5)}, \quad \rho^{(6i+7)} = \tilde{f_1}^{3i+4} \rho^{(6i+6)}.
\end{split}
\end{align*}
Since Scopes equivalences preserve our labelling of Specht modules and simple modules when we reduce to the block with core $(2)$, we have for this block that
$\langle e-1\rangle = (8)$, $\langle e-2, e-1\rangle = (5,2,1)$, $\langle e-2\rangle = (4,3,1)$, and $\langle e-3\rangle = (3^2,1^2)$, from which we may deduce that the corresponding submatrix is known to be (\ref{targetmatrix}) -- for instance, see \cite[Appendix~B]{mathas}.
The result now follows.\\

%

Now, if $p=2$, we may argue as in \cref{subsec:wt2firstcase}, by directly applying \cref{thm:wt2ext} -- none of our partitions are of the form $\langle i^2 \rangle$ for any $i$, and $\la$ and $\mu$ are adjacent whenever $d_{\la\mu}^{e,0}(v) = v$, so that we get the same subquiver of the Gabriel quiver in characteristic $2$ as characteristic $0$ -- i.e.~an $A^{(1)}_3$ quiver (square) with zigzag orientation.

\subsection{$p_{e-1} - p_{e-2} < e$ and $p_{e-2} - p_{e-3} > e$}\label{subsec:wt2fourthcase}

Next, suppose that $\rho$ satisfies $p_{e-1} - p_{e-2} < e$ and $p_{e-2} - p_{e-3} > e$.

We will obtain the $4\times4$ submatrix of the graded decomposition matrix for the block, with rows and columns indexed by the partitions $\langle e-1\rangle$, $\langle e-2\rangle$, $\langle e-2, e-1\rangle$, and $\langle (e-1)^2\rangle$, and see that it is (\ref{targetmatrix}).

This case proceeds analogously to the one handled in \cref{subsec:wt2thirdcase}.
As before, it is clear that for these four partitions, we may apply the runner removal result \cref{thm:runnerrem} to remove all but runners $e-3$, $e-2$, and $e-1$, leaving abacus displays with $e=3$.
We must have either $p_{e-2} = p_{e-3} + (k-1)e + 1$ and $p_{e-1} = p_{e-2} + 1$ or $p_{e-2} = p_{e-3} + (k-1)e + 2$ and $p_{e-1} = p_{e-2} + 2$, for some $k\geq 2$.
The corresponding $(2k+3)$ bead abacus displays are depicted below.
\[
\abacus(vvv,bbb,bbn,bbn,vvv,bbn,bbn,nnn,vvv)
\qquad\qquad
\abacus(vvv,bbb,bnb,bnb,vvv,bnb,bnb,nnn,vvv)
\]
Here, the diagrams have $k$ more beads on the leftmost and middle runners than the rightmost, or $k$ more beads on the leftmost and rightmost runners than the middle one, respectively.

Because of this, swapping the $1$- and $2$-runners (i.e.~the middle and rightmost runners) or the $0$- and $1$-runners (i.e.~the leftmost and middle runners) always involves swapping runners on which the number of beads differ by at least $w=2$.

For $k\geq 2$, we let $\sigma^{(2k-3)} = (\rho^{(2k-3)})' = ((k-1)^2, (k-2)^2,\dots, 1^2)$ denote the partition with abacus display on the left above (satisfying $p_{e-2} = p_{e-3} + (k-1)e + 1$ and $p_{e-1} = p_{e-2} + 1$) and $\sigma^{(2k-2)} = (\rho^{(2k-2)})' = (k,(k-1)^2, (k-2)^2,\dots, 1^2)$ denote the partition with abacus display on the right above (satisfying $p_{e-2} = p_{e-3} + (k-1)e + 2$ and $p_{e-1} = p_{e-2} + 2$).
Then our first few partitions are $\sigma^{(1)}=(1^2)$, $\sigma^{(2)}=(2,1^2)$, $\sigma^{(3)}=(2^1,1^2)$, $\sigma^{(4)}=(3,2^2,1^2)$, and $\sigma^{(5)}=(3^2,2^2,1^2)$.
It is clear that swapping the $1$- and $2$-runners swaps $\sigma^{(2k-3)}$ and $\sigma^{(2k-2)}$, while swapping the $0$- and $1$-runners of the above abacus display for $\sigma^{(2k-2)}$ yields a $2k+3$ bead abacus display for $\sigma^{(2k-1)}$.
But $\sigma^{(2k-1)}$ has a $2(k+1)+3$ bead abacus display as above, so that we can again swap the $0$- and $1$-runners to obtain $\sigma^{(2k)}$, and so on.
Then we see that all possible cores in this case are among our partitions $\sigma^{(i)}$, and they may be recursively obtained from $\sigma^{(1)} = (1^2)$ by swapping adjacent runners, where one has $k\geq 2 = w$ more beads than the others.
So we may apply \cref{prop:grScopes}, and reduce our computation to the case of the block with core $(1^2)$.

Notice that when we swap the $0$- and $1$-runners to interchange $\sigma^{(2k-3)}$ and $\sigma^{(2k-2)}$, this corresponds to performing $k$-induction on the core (where we take the residue of $k$ modulo $3$).
Similarly, when swapping the $1$- and $2$-runners to interchange $\sigma^{(2k-2)}$ and $\sigma^{(2k-1)}$, this corresponds to performing $(k+1)$-induction on the core.

Thus, we may alternatively characterise our runner swaps above as applying (maximal powers of) Kashiwara operators recursively to the cores, and the partitions in those blocks, as follows.
\begin{align*}\label{eq:inducecase4cores}
\begin{split}
\sigma^{(6i+2)} = \tilde{f_1}^{3i+2} \sigma^{(6i+1)}, \quad \sigma^{(6i+3)} &= \tilde{f_0}^{3i+2} \sigma^{(6i+2)}, \quad \sigma^{(6i+4)} = \tilde{f_2}^{3i+3} \sigma^{(6i+3)},\\
\sigma^{(6i+5)} = \tilde{f_1}^{3i+3} \sigma^{(6i+4)}, \quad \sigma^{(6i+6)} &= \tilde{f_0}^{3i+4} \sigma^{(6i+5)}, \quad \sigma^{(6i+7)} = \tilde{f_2}^{3i+4} \sigma^{(6i+6)}.
\end{split}
\end{align*}
Noting, as before, that Scopes equivalences preserve our labelling of Specht modules and simple modules when we reduce to the block with core $(1^2)$, we have for this block that $\langle e-1\rangle = (7,1)$, $\langle e-2\rangle = (6,2)$, $\langle e-2, e-1\rangle = (4^2)$, and $\langle (e-1)^2\rangle = (4,2^2)$, from which we may deduce that the corresponding submatrix is known to be (\ref{targetmatrix}) -- for instance, see \cite[Appendix~B]{mathas}.
The result now follows.\\


Now, if $p=2$, the exact same argument we used in \cref{subsec:wt2secondcase} shows that we get the same subquiver of the Gabriel quiver in characteristic $2$ as characteristic $0$ -- i.e.~an $A^{(1)}_3$ quiver (square) with zigzag orientation.

\subsection{$p_{e-1} - p_{e-2} > e$ and $p_{e-2} - p_{e-3} > e$}\label{subsec:wt2fifthcase}

Finally, suppose that $\rho$ satisfies $p_{e-1} - p_{e-2} > e$ and $p_{e-2} - p_{e-3} > e$.

We will obtain the $5\times5$ submatrix of the graded decomposition matrix for the block, with rows and columns indexed by the partitions $\langle e-1\rangle$, $\langle (e-1)^2\rangle$, $\langle e-2, e-1\rangle$, $\langle e-2\rangle$, and $\langle (e-2)^2\rangle$, and see that it is (\ref{targetmatrixstar}).

This case proceeds analogously to the one handled in \cref{subsec:wt2thirdcase}.
As before, it is clear that for these five partitions, we may apply the runner removal result \cref{thm:runnerrem} to remove all but runners $e-3$, $e-2$, and $e-1$, leaving abacus displays with $e=3$.
Then we have either $p_{e-1} = p_{e-2} + ke + 1$ and $p_{e-2} = p_{e-3} + je + 1$ or $p_{e-1} = p_{e-2} + ke + 2$ and $p_{e-2} = p_{e-3} + je + 2$, for some $j, k \geq 1$.
In this way, we index the cores by the pair of integers $(ke+1,je+1)$ or $(ke+2, je+2)$, so we will denote such core by $\kappa_{(ke+1,je+1)}$ or $\kappa_{(ke+2,je+2)}$, as appropriate.
We will draw the $(k+2j+3)$ bead abacus display for $\kappa_{(ke+1,je+1)}$ as below.
\[
\abacus(vvv,bbb,nbb,nbb,vvv,nbb,nnb,nnb,vvv,nnb,vvv)
\]
Note that the number of beads on the $0$- and $1$-runners differ by $j$, while those on the $1$- and $2$-runners differ by $k$.

Similarly, we draw the $(k+2j+5)$ bead abacus display for $\kappa_{(ke+2,je+2)}$ as below.
\[
\abacus(vvv,bbb,bnb,bnb,vvv,bnb,nnb,nnb,vvv,nnb,vvv)
\]
Note that the number of beads on the $0$- and $1$-runners differ by $j+1$, while those on the $1$- and $2$-runners differ by $k+j+1$.

It is easy to see that swapping the $1$- and $2$-runners on the above display
yields a $(k+2j+5)$ bead abacus display for $\kappa_{(ke+1,je+1)}$.
On the other hand, if we start with our $(k+2j+3)$ bead abacus display for $\kappa_{(ke+1,je+1)}$, swapping the $0$- and $1$-runners (which differ by $j$ beads) yields a $(k+2j+3)$ bead abacus display for $\kappa_{(ke+2,(j-1)e+2)}$, while 
swapping the $1$- and $2$-runners (which differ by $k$ beads) yields a $(k+2j+3)$ bead abacus display for $\kappa_{((k-1)e+2,je+2)}$.

Thus we may use the graded Scopes equivalences of \cref{prop:grScopes} to pass between the blocks with these cores, as in \cref{subsec:wt2thirdcase,subsec:wt2fourthcase}.
Noting, as before, that Scopes equivalences preserve our labelling of Specht modules and simple modules, it is clear that we can use Scopes equivalences to reduce all the way down to the block with core $\kappa_{(e+1,e+1)}$, for which the corresponding five partitions then become $(9,1^2)$, $(6,4,1)$, $(6,3,2)$, $(5,4,2)$, and $(3^2,2^2,1)$, from which we may deduce that the corresponding submatrix is known to be (\ref{targetmatrixstar}) -- for instance, see \cite[Appendix~B]{mathas}.
Our result now follows.\\

Now, if $p=2$, we must use different partitions.
If $e=3$, then the block is labelled by the $3$-core $(3,1^2)$.
By applying \cref{thm:wt2ext} our Gabriel quiver is
\[
\begin{tikzcd}
	{\langle (e-1)^2 \rangle} & {\langle e-1 \rangle} & {\langle e-2,e-1 \rangle} & {\langle e-2 \rangle} & {\langle (e-2)^2 \rangle}
	\arrow[from=1-1, to=1-2, shift left=.9ex, "\alpha_1" above]
	\arrow[from=1-2, to=1-1, shift left=.9ex, "\beta_1" below]
	\arrow[from=1-2, to=1-3, shift left=.9ex, "\alpha_2" above]
	\arrow[from=1-3, to=1-2, shift left=.9ex, "\beta_2" below]
	\arrow[from=1-3, to=1-4, shift left=.9ex, "\alpha_3" above]
	\arrow[from=1-4, to=1-3, shift left=.9ex, "\beta_3" below]
	\arrow[from=1-4, to=1-5, shift left=.9ex, "\alpha_4" above]
	\arrow[from=1-5, to=1-4, shift left=.9ex, "\beta_4" below]
\end{tikzcd}
\]
so that our methods are unable to determine the Schurian-infiniteness of the block.
We use a different method to prove that it is Schurian-infinite, so we will come back to this case at the end of the section.
Until then, we will assume that $e\geq4$.

If $_{e-4}0_{e-3}$, we take our partitions to be $\langle e-2, e-1 \rangle$, $\langle e-3, e-1 \rangle$, $\langle e-2 \rangle$, $\langle e-3, e-2 \rangle$.
Then \cref{thm:wt2ext} gives us that
\begin{multline*}
\Ext^1(\D{\langle e-2, e-1 \rangle}, \D{\langle e-3, e-1 \rangle}) \cong 
\Ext^1(\D{\langle e-2, e-1 \rangle}, \D{\langle e-2 \rangle})\\
\cong
\Ext^1(\D{\langle e-3, e-2 \rangle}, \D{\langle e-3, e-1 \rangle}) \cong 
\Ext^1(\D{\langle e-3, e-2 \rangle}, \D{\langle e-2 \rangle}) \cong \bbf,
\end{multline*}
provided we can show that the corresponding pairs of partitions are adjacent.
In order to do this, we should again show that these partitions give us (\ref{targetmatrixalt}) as the corresponding submatrix of the characteristic $0$ graded decomposition matrix -- in fact it suffices to pick out the four $v$ entries.
We may use \cref{thm:runnerrem} to reduce to the $e=4$ situation.
Then we may argue as for $p\neq 2$, but this time we must keep track of $p_3-p_2$, $p_2 - p_1$, and $p_1 - p_0$ -- we will again denote the corresponding core by $\kappa_{p_3-p_2, p_2 - p_1, p_1 - p_0}$.

Then since we can't ever have $p_i - p_j \equiv 0 \pmod 4$, it is not so difficult to check that all cores must fit into one of the following six families:
\[
\kappa_{ie+3, je+3, ke+3}, \; \kappa_{ie+2, je+3, ke+2}, \; \kappa_{ie+1, je+2, ke+3}, \; \kappa_{ie+3, je+2, ke+1}, \; \kappa_{ie+2, je+1, ke+2}, \text{ or } \kappa_{ie+1, je+1, ke+1},
\]
for $i,j,k \in \bbz_{>0}$.
We may check that, as before, we may perform runner swaps to get between these blocks, and that these will always involve runners whose number of beads differ by at least $2$, so that we may again apply \cref{prop:grScopes} to reduce down to the block with core $\kappa_{e+1, e+1, e+1} = (6,3^2,1^3)$.
In this block, we use the partitions $\langle e-2, e-1 \rangle = (10,6,4,1^3)$, $\langle e-3, e-1 \rangle = (10,3^3,2^2)$, $\langle e-2 \rangle = (9,7,4,1^3)$, and $\langle e-3, e-2 \rangle = (6^2,4,3,2^2)$, and the result follows.

To see the runner swaps explicitly, we use the Kashiwara operators $\tilde{e}_i^{\rm max}$ on the cores, so that it is presented more cleanly.
Then, taking subscripts modulo $e$, we have
\begin{itemize}
\item $\tilde{e}_{3i+2j+k+2}^{i+j+k+2} \kappa_{ie+3,je+3,ke+3} = \kappa_{ie+2, je+3, ke+2}$;

\item $\tilde{e}_{3i+2j+k+1}^{i+j+1} \kappa_{ie+2,je+3,ke+2} = \kappa_{ie+1, je+2, ke+3}$ and $\tilde{e}_{3i+2j+k-1}^{j+k+1} \kappa_{ie+2,je+3,ke+2} = \kappa_{ie+3, je+2, ke+1}$;

\item $\tilde{e}_{3i+2j+k-1}^{j+k+1} \kappa_{ie+1, je+2, ke+3} = \kappa_{ie+2, je+1, ke+2} = \tilde{e}_{3i+2j+k+1}^{i+j+1} \kappa_{ie+3, je+2, ke+1}$;

\item $\tilde{e}_{3i+2j+k}^{i} \kappa_{ie+1, je+2, ke+3} = \kappa_{(i-1)e+3, je+3, ke+3}$,
$\tilde{e}_{3i+2j+k-2}^{j} \kappa_{ie+2, je+1, ke+2} = \kappa_{ie+3, (j-1)e+3, ke+3}$,
and\linebreak
$\tilde{e}_{3i+2j+k}^{k} \kappa_{ie+3, je+2, ke+1} = \kappa_{ie+3, je+3, (k-1)e+3}$;

\item $\tilde{e}_{3i+2j+k}^{i+j+k+1} \kappa_{ie+2, je+1, ke+2} = \kappa_{ie+1, je+1, ke+1}$;

\item $\tilde{e}_{3i+2j+k-1}^{i} \kappa_{ie+1, je+1, ke+1} = \kappa_{(i-1)e+3, je+2, ke+1}$,
$\tilde{e}_{3i+2j+k-2}^{j} \kappa_{ie+1, je+1, ke+1} = \kappa_{ie+2, (j-1)e+3, ke+2}$,
and\linebreak
$\tilde{e}_{3i+2j+k-3}^{k} \kappa_{ie+1, je+1, ke+1} = \kappa_{ie+1, je+2, (k-1)e+3}$.
\end{itemize}
Each of these may be easily deduced by analysing the beta-numbers for these cores.

On the other hand, if $_{e-4}1_{e-3}$, we take our partitions to be $\langle e-2 \rangle$, $\langle e-3, e-2 \rangle$, $\langle e-3 \rangle$, $\langle e-4 \rangle$.
Then \cref{thm:wt2ext} gives us that
\[
\Ext^1(\D{\langle e-2 \rangle}, \D{\langle e-3, e-2 \rangle}) \cong \Ext^1(\D{\langle e-2 \rangle}, \D{\langle e-4 \rangle}) \cong \Ext^1(\D{\langle e-3 \rangle}, \D{\langle e-3, e-2 \rangle}) \cong \Ext^1(\D{\langle e-3 \rangle}, \D{\langle e-4 \rangle}) \cong \bbf,
\]
provided we can show that the corresponding pairs of partitions are adjacent.
This latter point is proved in an analogous manner to the previous case, essentially setting $k=0$ in the previous case.
Then Scopes equivalences reduce this to the case of the block with core $\kappa_{e+1,e+1,1} = (5,2^2)$, so that our four partitions are $\langle e-2 \rangle = (8,6,3)$, $\langle e-3, e-2 \rangle = (5^2,3,2,1^2)$, $\langle e-3 \rangle = (5,4,3^2,1^2)$, and $\langle e-4 \rangle = (5,3^3,1^3)$, giving the matrix (\ref{targetmatrix}).

In both cases, we obtain an $A^{(1)}_3$ quiver (square) with zigzag orientation as a subquiver of the Gabriel quiver.\\

Finally, we return to the case $e=3$, recalling that up to Scopes equivalence, we are looking at the block with core $(3,1^2)$.
As preparation, we first show that Specht modules in this block are all uniserial.

The simples in this block are labelled by the five partitions $\la_1 := \langle 2 \rangle = (9,1^2)$, $\la_2 := \langle 2^2 \rangle = (6,4,1)$, $\la_3 := \langle 1,2 \rangle = (6,3,2)$, $\la_4 := \langle 1 \rangle = (5,4,2)$ and $\la_5 := \langle 1^2 \rangle = (3^2,2^2,1)$.
The block also contains the $3$-singular partitions $\la_6 := (6,1^5)$, $\la_7 := (3^2,2,1^3)$, $\la_8 := (3,2^3,1^2)$, and $\la_9 := (3,1^8)$.
Using the LLT algorithm~\cite{LLT}, which computes decomposition numbers, by~\cite{ariki96} and \cref{thm:wt2adjust}, we may deduce that the (ungraded) decomposition matrix is as below.
\[
\begin{array}{c|ccccc}
 & \la_1 & \la_2 & \la_3 & \la_4 & \la_5 \\\hline
\la_1 & 1 & \cdot & \cdot & \cdot & \cdot \\
\la_2 & 1 & 1 & \cdot & \cdot & \cdot \\
\la_3 & 2 & 1 & 1 & \cdot & \cdot \\
\la_4 & 1 & 1 & 1 & 1 & \cdot \\
\la_5 & 1 & 0 & 1 & 1 & 1 \\
\la_6 & 0 & 0 & 1 & 0 & 0 \\
\la_7 & 0 & 0 & 1 & 2 & 1 \\
\la_8 & 0 & 0 & 0 & 1 & 1 \\
\la_9 & 0 & 0 & 0 & 1 & 0
\end{array}
\]
The Mullineux map swaps $\la_1$ and $\la_4$, $\la_2$ and $\la_5$, and fixes $\la_3$.

In the following, we use the terminology of string modules and band modules, for a general finite-dimensional algebra~\cite{stv21}.
However, as we use left modules, some care must be taken.
To define the string module $M(w)$, for a walk $w$ on the double quiver of the Gabriel quiver, we read the walk $w$ from right to left.
For example, $M(\beta_2\beta_1)$ is the uniserial module such that the head is $\D{\la_2}$, the heart is $\D{\la_1}$ and 
the socle is $\D{\la_3}$.

By the decomposition matrix, $\spe{\la_9}\cong \D{\la_4}$.
As $\spe{\la_8}$ is a submodule of $P^{\la_5}$ and $[\spe{\la_8}] = [\D{\la_4}] + [\D{\la_5}]$, $\spe{\la_8}$ is the uniserial module of length two whose head is $\D{\la_4}$ and whose socle is $\D{\la_5}$.
This is the string module $M(\beta_4)$.
Next we consider $\spe{\la_7}$.
As it is a submodule of $P^{\la_3}$, and $[S^{\la_7}/\Soc(\spe{\la_7})] = 2[\D{\la_4}] + [\D{\la_5}]$, $\Ext^1(\D{\la_4},\D{\la_4}) = 0$ and $\Ext^1(\D{\la_4},\D{\la_5})=\Ext^1(\D{\la_5},\D{\la_4}) = \bbf$, we must have that 
$\spe{\la_7}/\Soc(\spe{\la_7})\cong M(\alpha_4\beta_4)$, so that $\spe{\la_7}\cong M(\alpha_3\alpha_4\beta_4)$.
Hence
\[
\spe{\la_9}\cong \D{\la_4}, \quad \spe{\la_8}\cong M(\beta_4), \quad \spe{\la_7}\cong M(\alpha_3\alpha_4\beta_4).
\]
By similar arguments, we obtain
\begin{gather*}
\spe{\la_6}\cong \D{\la_3}, \quad \spe{\la_5}\cong M(\alpha_2\alpha_3\alpha_4), \quad \spe{\la_4}\cong M(\alpha_1\alpha_2\alpha_3), \\
\spe{\la_3}\cong M(\beta_1\alpha_1\alpha_2),\quad \spe{\la_2}\cong M(\beta_1), \quad \spe{\la_1}\cong \D{\la_1}.
\end{gather*}

We have Specht filtration $P^{\la_2} = V_0\supset V_1\supset V_2$ such that 
\[
V_0/V_1\cong \spe{\la_2}, \quad V_1/V_2\cong \spe{\la_3}, \quad V_2\cong \spe{\la_4}. 
\]
By the shape of the Gabriel quiver, the head and the socle of $\Rad(P^{\la_2})/\Soc(P^{\la_2})$ is $\D{\la_1}$ and its composition factors are $4[\D{\la_1}]+[\D{\la_2}]+2[\D{\la_3}]+[\D{\la_4}]$.
Since $\Rad(P^{\la_2})/\Soc(P^{\la_2})$ is self-dual, $\D{\la_i}$ appears in the head of $\Rad(P^{\la_2})/\Soc(P^{\la_2})$ only when 
$\D{\la_i}$ appears at least twice as the composition factor, so that $i=1$ or $i=3$, and we may rule out $i=3$ since $\Ext^1(\D{\la_2},\D{\la_3}) = 0$.
Hence the head and the socle of $\Rad(P^{\la_2})/\Soc(P^{\la_2})$ is $D^{\la_1}$ and $\Rad^2(P^{\la_2})/\Soc^2(P^{\la_2})$ is self-dual with composition factors $2[\D{\la_1}] + [\D{\la_2}] + 2[\D{\la_3}] + [\D{\la_4}]$.
Then, by the same argument as above, $\D{\la_i}$ appears in the head of $\Rad^2(P^{\la_2})/\Soc^2(P^{\la_2})$ only when $\D{\la_i}$ appears at least twice as the composition factor, so that $i=1$ or $i=3$, and we may rule out $i=1$ since $\Ext^1(\D{\la_1},\D{\la_1}) = 0$.
Hence the head and the socle of $\Rad^2(P^{\la_2})/\Soc^2(P^{\la_2})$ is $\D{\la_3}$ and $\Rad^3(P^{\la_2})/\Soc^3(P^{\la_2})$ is self-dual.
It follows that 
\[
\Rad^3(P^{\la_2})/\Soc^3(P^{\la_2})\cong M(\beta_1\alpha_1)\oplus \D{\la_4}.
\]
Therefore, relabelling the vertices of the Gabriel quiver as $1,2,\dots,5$ from left to right, we may identify the corresponding indecomposable projective module of the basic algebra of the block as
\begin{gather*}
\bbf e_1 \\
\bbf \beta_1 \\
\bbf \beta_2\beta_1 \\
H \\
\bbf \alpha_3\beta_3\beta_2\beta_1 \\
\bbf \alpha_2\alpha_3\beta_3\beta_2\beta_1 \\
\bbf \alpha_1\alpha_2\alpha_3\beta_3\beta_2\beta_1
\end{gather*}
where $H$ is the direct sum of $\bbf \beta_3\beta_2\beta_1$ and the uniserial module
\begin{gather*}
\bbf \alpha_2\beta_2\beta_1 \\
\bbf \alpha_1\alpha_2\beta_2\beta_1 \\
\bbf \beta_1\alpha_1\alpha_2\beta_2\beta_1.
\end{gather*}
Further, we may obtain several relations, which importantly include $\alpha_1\beta_1 = 0$.

By swapping $\la_1$ and $\la_4$, $\la_2$ and $\la_5$, we obtain the analogous result for $P^{\la_5}$.

In the following, we denote by $P^{\la_i}$ the indecomposable projective modules over the basic algebra by abuse of notation. Then right multiplication by $\alpha_1$ gives the unique homomorphism $P^{\la_2}\to P^{\la_1}$ up to nonzero scalar multiple.
Let $S$ and $T$ be the submodule of $P^{\la_1}$ generated by 
$\beta_2\beta_1\alpha_1$ and $\beta_2$, respectively.
Then, $\Rad(P^{\la_1})$ is the sum of $T$ and the module
\begin{gather*}
\bbf \alpha_1 \\
\bbf \beta_1 \alpha_1 \\
S.
\end{gather*}
Further, $\Rad(S)$ is generated by $\alpha_2\beta_2\beta_1\alpha_1$ and $\beta_3\beta_2\beta_1\alpha_1$, $\Rad(T)$ is generated by 
$\alpha_2\beta_2$ and $\beta_3\beta_2$.
We have a similar result for $P^{\la_4}$.
On the other hand, $\Rad(P^{\la_3})$ is generated by 
$\alpha_2$ and $\beta_3$.

Let $A$ be the quotient algebra of the basic algebra of the block by setting $\alpha_2 = 0$ and $\beta_3 = 0$.
Further, let $t = e_2+e_3+e_4$ and consider $tAt$.
It is easy to check that $tAt\cong \bbf Q/(\varepsilon^2, \varphi^2)$, where $Q$ is the following quiver.
\[
\begin{tikzcd}[scale=2]
	{2} & {3} & {4}
	\arrow[from=1-1, to=1-1, loop left, "\varepsilon" above]
	\arrow[from=1-2, to=1-1, "\beta" above]
	\arrow[from=1-2, to=1-3, "\alpha" above]
	\arrow[from=1-3, to=1-3, loop right, "\phi" above]
\end{tikzcd}
\]
Thus, $tAt$ is a gentle algebra.
Then $tAt$ is Schurian-infinite, by \cite[Proposition~3.3]{Plam19}.
We may also see this by applying \cite[Theorem~1.1]{Plam19}, observing that there is a band $\beta^-\alpha\varphi\alpha^-\beta\varepsilon$, which implies that $tAt$ is representation-infinite.
The result now follows by \cref{reduction}.

\section{Weight 3 blocks}\label{sec:wt3}

In this section, we will prove \cref{thm:main} for weight $3$ blocks.
As in \cref{sec:wt2}, we will heavily rely on \cref{prop:matrixtrick}, and thus, by \cref{thm:jamesconjsmallwt} we may assume that $p=0$, and deduce the result for all $p>3$.

We once again split our blocks into five separate cases, as in \cref{sec:wt2}.
In order to index partitions nicely, we tweak our notation from \cref{subsec:abacus}.
Our notation is analogous to that used for example in \cite{fay08wt3}, but slightly different, to keep it compatible with our weight 2 notation choices.
Recalling that runner $i$ of the abacus display is by definition the runner containing the position $p_i$, in this subsection, $\langle i \rangle$ will denote the partition obtained from the core by sliding the lowest bead on runner $i$ down three places, while $\langle i, j \rangle$ will denote the partition obtained from the core by sliding the lowest bead on runner $i$ down two places, and the lowest bead on runner $j$ down one place.
Note that we allow $i=j$, so that $\langle i, i \rangle$ is obtained by sliding the lowest bead on runner $i$ down two places, and the second lowest bead down one place.
We let $\langle i^2, j \rangle$ denote the partition whose abacus display is obtained from that of the core by sliding down the lowest two beads on runner $i$ and the lowest bead on runner $j$ one place each.
We let $\langle i^3 \rangle$ denote the partition whose abacus display is obtained from that of the core by sliding the lowest three beads on runner $i$ down one place each.
Finally, we let $\langle i, j, k\rangle$, for $i<j<k$, denote the partition whose abacus display is obtained from that of the core by sliding down the lowest bead on runners $i$, $j$, and $k$ down one place each.

In order to also compute the graded decomposition numbers in characteristics 2 and 3, we will use results of Fayers and Tan to determine the adjustment matrices.

\begin{defn}
If $B=B(\rho,3)$ is a weight 3 block of $\hhh$ and the abacus display for $\rho$ satisfies $p_i - p_{i-1} > 2e$ for all $i = 1,\dots, e-1$, then we say that $B$ is a \emph{Rouquier block}.
The collection of all such blocks of weight 3 forms a Scopes equivalence class.
A Rouquier block can be thought of as the maximal Scopes class for each weight.
\end{defn}

\begin{thmc}{faytan06}{Proposition 3.1}
Suppose that $B$ is a weight $3$ Rouquier block of $\hhh$.
\begin{enumerate}[label=(\roman*)]
\item If $\nchar \bbf = 2$, then
$a_{\nu\mu}(v) = 1$ if $(\nu,\mu) = (\langle i^3 \rangle, \langle i \rangle)$ or $(\langle i^2,k \rangle, \langle i,k \rangle)$, for $1\leq i,k \leq e-1$ with $i\neq k$.

\item If $\nchar \bbf = 3$, then
$a_{\nu\mu}(v) = 1$ if $(\nu,\mu) = (\langle i^3 \rangle, \langle i,i \rangle)$ or $(\langle i,i \rangle, \langle i \rangle)$, for $1\leq i \leq e-1$.
\end{enumerate}
For all other $\nu$ and $\mu$, $a_{\nu\mu}(v) = \delta_{\nu\mu}$.
\end{thmc}

For the following result, the reader is invited to see \cite{faytan06} for the exact definition of inducing semi-simply and almost semi-simply -- we will only require the explicit determination of when the former occurs.

\begin{thmc}{faytan06}{Theorem 3.3 and Proposition 3.4}\label{thm:wt3adj}
Suppose that $B$ is a weight $3$ block of $\hhh$.
\begin{enumerate}[label=(\roman*)]
\item If $\nchar \bbf = 2$, then
$a_{\nu\mu}(v) = 1$ if there is a Rouquier block $C$ and some $1\leq i\neq k \leq e-1$ such that
\begin{itemize}
\item $\nu$ induces semi-simply or almost semi-simply to $\langle i^3 \rangle$ in $C$, while $\mu$ induces semi-simply to $\langle i \rangle$ in $C$; or such that
\item $\nu$ induces semi-simply to $\langle i^2, k \rangle$ in $C$, while $\mu$ induces semi-simply to $\langle i, k \rangle$ in $C$.
\end{itemize}

\item If $\nchar \bbf = 3$, then
$a_{\nu\mu}(v) = 1$ if there is a Rouquier block $C$ and some $1\leq i \leq e-1$ such that
\begin{itemize}
\item $\nu$ induces semi-simply to $\langle i^3 \rangle$ in $C$, while $\mu$ induces semi-simply to $\langle i, i \rangle$ in $C$; or such that
\item $\nu$ induces semi-simply to $\langle i, i \rangle$ in $C$, while $\mu$ induces semi-simply to $\langle i \rangle$ in $C$.
\end{itemize}
\end{enumerate}
For all other $\nu$ and $\mu$, $a_{\nu\mu}(v) = \delta_{\nu\mu}$.

In particular, a partition $\la$ in $B$ induces up semi-simply to a partition $\omega$ in $C$ of one of the forms above if and only if $\omega$, $\la$ and $B$ satisfy one of the following sets of conditions, where $1 \leq i < k$.
(Note that as a matter of convention, we consider $p_e$ to be an arbitrary integer satisfying $p_e > p_{e-1} + 2e$.)

\begin{table}
\centering
\begin{tabular}{ccc}\toprule
$\omega$ & $\la$ & Conditions on $B$\\\midrule
& $\langle i \rangle$ & $p_{i+1} - p_i > 2e$\\\cmidrule{2-3}
$\langle i \rangle$ & $\langle i, i+1 \rangle$ & $p_{i+1} - p_i < 2e$, $p_{i+2} - p_i > e$\\\cmidrule{2-3}
& $\langle i, i+1, i+2 \rangle$ & $p_{i+2} - p_i < e$\\\midrule
\multirow{2}{*}{$\langle i, i \rangle$} & $\langle i, i \rangle$ & $p_{i+1} - p_i > e$, $p_i - p_{i-1} > e$\\\cmidrule{2-3}
& $\langle i^2, i+1 \rangle$ & $p_{i+1} - p_i < e$, $p_i - p_{i-1} > e$\\\midrule
$\langle i^3 \rangle$ & $\langle i^3 \rangle$ & $p_i - p_{i-1} > 2e$\\\midrule
& $\langle i, k \rangle$ & $p_k - p_i > 2e$, $p_{i+1} - p_i > e$\\\cmidrule{2-3}
$\langle i, k \rangle$ & $\langle i, i+1, k \rangle$ & $p_k - p_{i+1} > e$, $p_{i+1} - p_i < e$\\\cmidrule{2-3}
& $\langle k^2, i \rangle$ & $p_k - p_{i+1} < e$, $p_k - p_i < 2e$, $p_k - p_{i-1} > e$ \\\midrule
& $\langle k, i \rangle$ & $p_{k+1} - p_k > e$, $p_k - p_i > e$\\\cmidrule{2-3}
\multirow{2}{*}{$\langle k, i \rangle$} & $\langle k, k \rangle$ & $p_{k+1} - p_k > e$, $p_k - p_i < e$, $p_k - p_{i-1} > e$\\\cmidrule{2-3}
& $\langle i, k, k+1 \rangle$ & $p_{k+1} - p_k < e$, $p_k - p_i > e$\\\cmidrule{2-3}
& $\langle k^2, k+1 \rangle$ & $p_{k+1} - p_k < e$, $p_k - p_i < e$, $p_k - p_{i-1} > e$\\\midrule
\multirow{2}{*}{$\langle i^2, k \rangle$} & $\langle i^2, k \rangle$ & $p_k - p_i > e$, $p_i - p_{i-1} > e$\\\cmidrule{2-3}
& $\langle k^3 \rangle$ & $p_k - p_i < e$, $p_k - p_{i-1} > 2e$\\\midrule
\multirow{2}{*}{$\langle k^2, i \rangle$} & $\langle k^2, i \rangle$ & $p_k - p_i > 2e$, $p_k - p_{k-1} > e$\\\cmidrule{2-3}
& $\langle k^3 \rangle$ & $2e > p_k - p_i > e$, $p_k - p_{i-1} > 2e$, $2e > p_k - p_{k-1} > e$\\
\bottomrule
\end{tabular}
\end{table}
\FloatBarrier
\end{thmc}

We will use the above result in the following way.
We will find a set of four partitions in our block such that for each $\mu$ among those four partitions, we have $a_{\nu\mu}(v) = \delta_{\nu\mu}$.
It then follows, e.g.~by the formula above \cref{lem:charfree}, that $d_{\la\mu}^{e,p}(v) = d_{\la\mu}^{e,0}(v)$ for $\la$ and $\mu$ among our four partitions, and thus we may apply \cref{prop:matrixtrick}.
Where possible, we choose our partitions so that $\mu$ does not induce up semi-simply to $\langle i \rangle$ or $\langle i,k \rangle$ if $p=2$, or likewise does not induce up semi-simply to $\langle i, i \rangle$ or $\langle i \rangle$ if $p=3$.
When this is not possible, it can be checked that no partition $\nu$ induces up semi-simply to $\langle i^3 \rangle$ or $\langle i^2, k \rangle$, or to $\langle i^3 \rangle$ or $\langle i, i \rangle$, respectively.

\subsection{$p_{e-1} - p_{e-3} < e$}\label{subsec:wt3firstcase}

We take the four partitions $\langle e-3 \rangle$, $\langle e-1, e-3 \rangle$, $\langle e-2, e-3 \rangle$, and $\langle e-3, e-2 \rangle$.

To compute the corresponding graded decomposition numbers, note that all but runners $e{-}3$, $e{-}2$ and $e{-}1$ may be removed, using \cref{thm:runnerrem}, leaving an abacus display with just three runners.
Since $p_{e-1} - p_{e-3} < e$, the abacus display for the core must be one of the following.
\[
\abacus(vvv,bbb,bbb,nnn,vvv) \qquad\qquad \abacus(vvv,bbb,bbn,nnn,vvv) \qquad\qquad \abacus(vvv,bbb,bnn,nnn,vvv)
\]
They are equivalent, and by convention we work with the first, as in \cref{subsec:wt2firstcase}.
Then our four partitions mentioned above become $(7,1^2)$, $(6,2,1)$, $(5,2^2)$ and $(4,3,2)$, respectively, from which we may deduce that the corresponding submatrix (for $e=3$) is known to be (\ref{targetmatrix}) -- for instance, see \cite[Appendix~B]{mathas}.\\

If $p=2$ or $3$, we may apply \cref{thm:wt3adj} and note that we have chosen our four partitions so that $a_{\nu\mu}(v) = \delta_{\nu\mu}$ for any $\mu$ among them, in both characteristics (since none of these partitions induce up semi-simply to any $\langle i \rangle$, $\langle i, i \rangle$, or $\langle i, k \rangle$). Thus we obtain the same submatrix of the graded decomposition matrix in any characteristic.

\subsection{$p_{e-1} - p_{e-2} < e$ and $p_{e-2} - p_{e-3} < e$, but $p_{e-1} - p_{e-3} > e$}\label{subsec:wt3secondcase}

The four partitions we must examine are $\langle e-2 \rangle$, $\langle e-1, e-2 \rangle$, $\langle e-3 \rangle$, and $\langle e-2, e-3 \rangle$.

We may again remove all but runners $e{-}3$, $e{-}2$ and $e{-}1$, using \cref{thm:runnerrem}, leaving an abacus display with just three runners.
After doing so, the remaining configuration for the $e=3$ abacus display is as below.
\[
\abacus(vvv,bbb,bbn,bnn,nnn,vv)
\]

Thus, our runner removal applied to the abacus displays for $\langle e-2 \rangle$, $\langle e-1, e-2 \rangle$, $\langle e-3 \rangle$, and $\langle e-2, e-3 \rangle$ yields the four partitions $(8,2)$, $(7,3)$, $(6,2,1^2)$, and $(5,2^2,1)$, respectively.
Now, the corresponding submatrix is known to be (\ref{targetmatrix}) -- e.g.~by \cite[Appendix~B]{mathas} -- which completes this case.\\

If $p=2$ or $3$, we may apply \cref{thm:wt3adj} and note, as in \cref{subsec:wt3firstcase}, that we have chosen our four partitions so that $a_{\nu\mu}(v) = \delta_{\nu\mu}$ for any $\mu$ among them, in both characteristics (since none of these partitions induce up semi-simply to any $\langle i \rangle$, $\langle i, i \rangle$, or $\langle i, k \rangle$). Thus we obtain the same submatrix of the graded decomposition matrix in any characteristic.

\subsection{$p_{e-1} - p_{e-2} > e$ and $p_{e-2} - p_{e-3} < e$}\label{subsec:wt3thirdcase}

Our choice of partitions depends on the values of $p_{e-1}, p_{e-2}$ and $p_{e-3}$, or in other words on which Scopes class our block lies in.
The four partitions we will use are either
\begin{enumerate}[label=(\roman*)]
\item $\langle e-1,e-1 \rangle$, $\langle e-1, e-2 \rangle$, $\langle (e-1)^2, e-2 \rangle$, and $\langle (e-1)^2,e-3 \rangle$;
\item $\langle (e-1)^2, e-2 \rangle$, $\langle e-3, e-1 \rangle$, $\langle e-3 \rangle$, and $\langle e-2, e-3 \rangle$;
or
\item $\langle e-2, e-1 \rangle$, $\langle e-3, e-1 \rangle$, $\langle e-2 \rangle$, and $\langle e-3 \rangle$.
\end{enumerate}
For each case, to compute the corresponding graded decomposition numbers, we may remove all but the $(e{-}3)$-, $(e{-}2)$-, and $(e{-}1)$-runners from the abacus displays, leaving an abacus display for $e=3$.
The remaining core may be any of the cores $\rho^{(i)}$ from \cref{subsec:wt2thirdcase}, and the reader may refer to that section for abacus displays for the first few.
Recall that for $j\geq 1$, we may swap adjacent runners to get between $\rho^{(2j)}$ and either $\rho^{(2j-1)}$ or $\rho^{(2j+1)}$, each involve swapping runners on which the number of beads differs by $j+1$.
Then if $j\geq 2$, this gives us a Scopes equivalence between blocks.
In other words, if we can prove our result for the blocks with cores $\rho^{(1)}=(2)$, $\rho^{(2)}=(3,1)$, and $\rho^{(3)}=(4,2)$, the result will follow.

For the block with core $(2)$ (after reduction by runner removal), we take the four partitions in (i) above, which are $(8,3)$, $(8,2,1)$, $(5,3^2)$, and $(5,3,2,1)$, respectively.
One can check that the corresponding submatrix is (\ref{targetmatrix}), e.g.~by the LLT algorithm~\cite{LLT}.

For the block with core $(3,1)$, we take the four partitions in (ii) above, which are $(6,4,3)$, $(6,3,2,1^2)$, $(5,4,2,1^2)$, and $(4^2,2^2,1)$, respectively.
One can check that the corresponding submatrix is (\ref{targetmatrix3}).

For the block with core $(4,2)$, we take the four partitions in (iii) above, which are $(7,4,3,1)$, $(7,3^2,1^2)$, $(6,5,3,1)$, $(5^2,3,1^2)$, respectively.
Then one can check that the corresponding submatrix is (\ref{targetmatrixalt}), e.g.~by the LLT algorithm.
The result now follows when $p>3$.\\

If $p=2$ or $3$, we may apply \cref{thm:wt3adj} and note, as in \cref{subsec:wt3firstcase}, that we have chosen our four partitions so that $a_{\nu\mu}(v) = \delta_{\nu\mu}$ for any $\mu$ among them, in both characteristics for cores $(2)$ and $(3,1)$, and in characteristic $3$ for the core $(4,2)$.

To see this, note the following.
\begin{enumerate}[label=(\roman*)]
\item In the block with core $(2)$, we have that 
\begin{itemize}
\item $\langle e-1, e-1 \rangle$ induces up semi-simply to $\langle e-1, e-1 \rangle$ in a Rouquier block;

\item $\langle e-1, e-2 \rangle$ induces up semi-simply to $\langle e-1, e-2 \rangle$ in a Rouquier block;

\item $\langle (e-1)^2, e-2 \rangle$ induces up semi-simply to $\langle e-2, e-1 \rangle$ in a Rouquier block;

\item $\langle (e-1)^2, e-3 \rangle$
doesn't induce up semi-simply to any $\langle i \rangle$, $\langle i, i \rangle$, or $\langle i, k \rangle$ in a Rouquier block.
\end{itemize}

Since no $\nu$ induces up semi-simply to $\langle (e-1)^3 \rangle$, $\langle (e-1)^2, e-2 \rangle$, or $\langle (e-2)^2, e-1 \rangle$ in a Rouquier block, we still get that $a_{\nu\mu}(v) = \delta_{\nu\mu}$ for any $\mu$ among our four partitions when $p=2$ or $3$.

\item In the block with core $(3,1)$, we have that none of our partitions induce up semi-simply to any $\langle i \rangle$, $\langle i, i \rangle$, or $\langle i, k \rangle$ except for $\langle (e-1)^2, e-2 \rangle$, which induces up semi-simply to $\langle e-2, e-1 \rangle$ in a Rouquier block.
Since no $\nu$ induces up semi-simply to $\langle (e-2)^2, e-1 \rangle$ in a Rouquier block, we still get that $a_{\nu\mu}(v) = \delta_{\nu\mu}$ for any $\mu$ among our four partitions when $p=2$ or $3$.

\item In the block with core $(4,2)$, none of our partitions induce up semi-simply to any $\langle i \rangle$ or $\langle i, i \rangle$, except for $\langle e-2 \rangle$, which induces up semi-simply to $\langle e-2 \rangle$ in a Rouquier block.
Since no $\nu$ induces up semi-simply to $\langle e-2, e-2 \rangle$ in a Rouquier block, we still get that $a_{\nu\mu}(v) = \delta_{\nu\mu}$ for any $\mu$ among our four partitions when $p = 3$.
\end{enumerate}

Thus we obtain the same submatrix of the graded decomposition matrix in any characteristic, except for the block with core $(4,2)$ in characteristic $2$.
For this, we must choose different partitions -- we take $\langle e-1, e-1 \rangle$, $\langle (e-1)^2, e-2 \rangle$, $\langle e-2, e-1 \rangle$, and $\langle e-3, e-1 \rangle$.
We may again use runner-removal to reduce the computation of the characteristic zero graded decomposition numbers, yielding partitions $(10,5)$, $(7,5,2,1)$, $(7,4,3,1)$, and $(7,3^2,1^2)$, whence we obtain the submatrix (\ref{targetmatrix3}).

Now, since none of our four partitions induce semi-simply to any $\langle i \rangle$ or $\langle i, k \rangle$, by \cref{thm:wt3adj}, 
we have $a_{\nu\mu}(v) = \delta_{\nu\mu}$ for any $\mu$ among them, and thus that we obtain the same submatrix of the graded decomposition matrix in characteristics $0$ and $2$.

Thus, in all cases above, \cref{prop:matrixtrick} yields our result.

\subsection{$p_{e-1} - p_{e-2} < e$ and $p_{e-2} - p_{e-3} > e$}\label{subsec:wt3fourthcase}

Our choice of partitions depends on the values of $p_{e-1}, p_{e-2}$ and $p_{e-3}$, or in other words on which Scopes class our block lies in.
The four partitions we will use are either
\begin{enumerate}[label=(\roman*)]
\item $\langle e-1,e-1 \rangle$, $\langle e-2, e-2 \rangle$, $\langle (e-1)^2, e-2 \rangle$, and $\langle (e-2)^2,e-1 \rangle$;

\item $\langle e-2 \rangle$, $\langle e-1, e-2 \rangle$, $\langle e-1, e-1 \rangle$, and $\langle e-2, e-2 \rangle$;
or

\item $\langle e-1 \rangle$, $\langle e-2 \rangle$, $\langle e-1, e-2 \rangle$, and $\langle e-1, e-1 \rangle$.
\end{enumerate}
For each case, to compute the corresponding graded decomposition numbers, we may remove all but the $(e{-}3)$-, $(e{-}2)$-, and $(e{-}1)$-runners from the abacus displays, leaving an abacus display for $e=3$.

The remaining core may be any of the cores $\sigma^{(i)}$ from \cref{subsec:wt2fourthcase}, and the reader may refer to that section for abacus displays for the first few.
As in \cref{subsec:wt3thirdcase}, our Scopes equivalences reduce the problem down to solving for the blocks with cores 
$\sigma^{(1)} = (1^2)$, $\sigma^{(2)} = (2,1^2)$, and $\sigma^{(3)} = (2^2,1^2)$.

For the block with core $(1^2)$ (after reduction by runner removal), we take the four partitions in (i) above, which are $(7,2^2)$, $(6,2^2,1)$, $(4^2,3)$, and $(4^2,2,1)$, respectively.
One can check that the corresponding submatrix is (\ref{targetmatrixalt}), e.g.~by the LLT algorithm.

For the block with core $(2,1^2)$, we take the four partitions in (ii) above, which are $(9,3,1)$, $(8,4,1)$, $(8,3,2)$, and $(6,3,2^2)$, respectively.
One can check that the corresponding submatrix is (\ref{targetmatrix3}).

For the block with core $(2^2,1^2)$, we take the four partitions in (iii) above, which are $(11,2,1^2)$, $(10,3,1^2)$, $(8,5,1^2)$, and $(8,3^2,1)$, respectively.
One can check that the corresponding submatrix is (\ref{targetmatrix}).
The result now follows when $p>3$.\\

If $p=2$ or $3$, we may apply \cref{thm:wt3adj} and note, as in \cref{subsec:wt3firstcase}, that we have chosen our four partitions so that $a_{\nu\mu}(v) = \delta_{\nu\mu}$ for any $\mu$ among them, in both characteristics for cores $(1^2)$ and $(2,1^2)$, and in characteristic $3$ for the core $(2^2,1^2)$.

To see this, note the following.
\begin{enumerate}[label=(\roman*)]
\item In the block with core $(1^2)$, we have that 
\begin{itemize}
\item $\langle e-1, e-1 \rangle$ induces up semi-simply to $\langle e-1, e-2 \rangle$ in a Rouquier block $C$;

\item $\langle e-2, e-2 \rangle$ doesn't induce up semi-simply to any $\langle i \rangle$, $\langle i, i \rangle$, or $\langle i, k \rangle$ in a Rouquier block $C$;

\item $\langle (e-1)^2, e-2 \rangle$ induces up semi-simply to $\langle e-2, e-1 \rangle$ in a Rouquier block $C$;

\item $\langle (e-2)^2, e-1 \rangle$ induces up semi-simply to $\langle e-2, e-2 \rangle$ in a Rouquier block $C$.
\end{itemize}

Since no $\nu$ induces up semi-simply to $\langle (e-1)^2, e-2 \rangle$, $\langle (e-2)^2, e-1 \rangle$, or $\langle (e-2)^3 \rangle$ in $C$, we still get that $a_{\nu\mu}(v) = \delta_{\nu\mu}$ for any $\mu$ among our four partitions when $p=2$ or $3$.

\item In the block with core $(2,1^2)$, we have that none of our partitions induce up semi-simply to any $\langle i \rangle$, $\langle i, i \rangle$, or $\langle i, k \rangle$ except for $\langle e-1, e-1 \rangle$, which induces up semi-simply to $\langle e-1, e-2 \rangle$ in a Rouquier block $C$.
Since no $\nu$ induces up semi-simply to $\langle (e-1)^2, e-2 \rangle$ in a Rouquier block $C$, we still get that $a_{\nu\mu}(v) = \delta_{\nu\mu}$ for any $\mu$ among our four partitions when $p=2$ or $3$.

\item In the block with core $(2^2,1^2)$, none of our partitions induce up semi-simply to any $\langle i \rangle$ or $\langle i, i \rangle$, except for $\langle e-1 \rangle$, which induces up semi-simply to $\langle e-1 \rangle$ in a Rouquier block.
Since no $\nu$ induces up semi-simply to $\langle e-1, e-1 \rangle$ in a Rouquier block, we still get that $a_{\nu\mu}(v) = \delta_{\nu\mu}$ for any $\mu$ among our four partitions when $p = 3$.
\end{enumerate}

Thus we obtain the same submatrix of the graded decomposition matrix in any characteristic, except for the block with core $(2^2,1^2)$ in characteristic $2$.
For this, we must choose different partitions -- we take $\langle e-2 \rangle$, $\langle e-1, e-2 \rangle$, $\langle e-1, e-1 \rangle$, and $\langle e-2, e-2 \rangle$.
We may again use runner-removal to reduce the computation of the characteristic zero graded decomposition numbers, yielding partitions $(10,3,1^2)$, $(8,5,1^2)$, $(8,3^2,1)$, and $(7,3^2,2)$, whence we obtain the submatrix (\ref{targetmatrix3}).

Now, by \cref{thm:wt3adj}, none of our partitions induce semi-simply to any $\langle i \rangle$ or $\langle i, k \rangle$, except for $\langle e-1, e-1 \rangle$, which induces semi-simply to $\langle e-1, e-2 \rangle$ in a Rouquier block.
But since no $\nu$ induces semi-simply to $\langle (e-1)^2, e-2 \rangle$,
we have $a_{\nu\mu}(v) = \delta_{\nu\mu}$ for any $\mu$ among our four partitions, and thus we obtain the same submatrix of the graded decomposition matrix in characteristics $0$ and $2$.

Thus, in all cases above, \cref{prop:matrixtrick} yields our result.

\subsection{$p_{e-1} - p_{e-2} > e$ and $p_{e-2} - p_{e-3} > e$}\label{subsec:wt3fifthcase}

Across various cases, we will use partitions $\langle e-1,e-1 \rangle, \langle e-1,e-2 \rangle, \langle (e-1)^2,e-2 \rangle, \langle e-2,e-1 \rangle, \langle (e-1)^3 \rangle, \langle (e-2)^2,e-1 \rangle, \langle e-2, e-2 \rangle$.
Notice that in all cases, we are able to apply \cref{thm:runnerrem}, reducing our computations down to the $e=3$ situation.

Recall that in \cref{subsec:wt2fifthcase}, we were able to  index the cores that arise in this case by the pair of integers $(ke+1,je+1)$ or $(ke+2, je+2)$, where $k, j \in \bbz_{>0}$, and denote these cores by $\kappa_{(ke+1,je+1)}$ or $\kappa_{(ke+2,je+2)}$.
As stated in \cref{subsec:wt2fifthcase}, we may pass between these cores by runner swaps, and these swaps involve swapping runners on which the number of beads differs by either
$k+j+1$ (for passing between $\kappa_{(ke+1,je+1)}$ and $\kappa_{(ke+2,je+2)}$),
$k$ (for passing between $\kappa_{((k-1)e+2,je+2)}$ and $\kappa_{(ke+1,je+1)}$),
or $j$ (for passing between $\kappa_{(ke+2,(j-1)e+2)}$ and $\kappa_{(ke+1,je+1)}$).
Since we are now in weight $3$, these are Scopes equivalences except for small $k$ and $j$, and we can see that there are four Scopes classes in total to check, with (minimal) representatives
$\kappa_{(e+1,e+1)} = (3,1^2)$, $\kappa_{(2e+1,e+1)} = (5,3,1^2)$, $\kappa_{(e+1,2e+1)} = (4,2^2,1^2)$, and $\kappa_{(2e+1,2e+1)} = (6,4,2^2,1^2)$.

For the block with core $(3,1^2)$, we may take the four partitions $\langle e-1, e-1 \rangle = (9,4,1)$, $\langle e-1,e-2 \rangle = (9,3,2)$, $\langle (e-1)^2,e-2 \rangle = (6,4^2)$, and $\langle (e-1)^3 \rangle = (6,4,2^2)$, respectively.
One can check that the corresponding submatrix is (\ref{targetmatrix}), e.g.~by the LLT algorithm.

For the block with core $(5,3,1^2)$, we may take the four partitions $\langle (e-1)^2,e-2 \rangle = (8,6,3,2)$, $\langle e-2,e-1 \rangle = (8,5,4,2)$, $\langle (e-2)^2,e-1 \rangle = (8,3^2,2^2,1)$, and $\langle e-2, e-2 \rangle = (5^2,4,2^2,1)$, respectively.
One can check that the corresponding submatrix is (\ref{targetmatrixalt}), e.g.~by the LLT algorithm.

For the block with core $(4,2^2,1^2)$, we may take the four partitions $\langle e-1,e-2 \rangle = (10,4,3,1^2)$, $\langle (e-1)^2,e-2 \rangle = (7,5^2,1^2)$, $\langle (e-1)^3 \rangle = (7,5,3^2,1)$, and $\langle (e-2)^2,e-1 \rangle = (7,4,3^2,2)$, respectively.
One can check that the corresponding submatrix is (\ref{targetmatrix2}).

For the block with core $(6,4,2^2,1^2)$, we may take the four partitions $\langle e-1,e-2 \rangle = (12,4^2,3,1^2)$, $\langle (e-1)^2,e-2 \rangle = (9,7,4,3,1^2)$, $\langle e-2,e-1 \rangle = (9,6,5,3,1^2)$, and $\langle (e-2)^2,e-1 \rangle = (9,4^2,3^2,2)$, respectively.
One can check that the corresponding submatrix is (\ref{targetmatrixaltsquare}).
The result now follows if $p>3$.

If $p=3$, we may apply \cref{thm:wt3adj} and note, as in \cref{subsec:wt3firstcase}, that we have chosen our four partitions so that $a_{\nu\mu}(v) = \delta_{\nu\mu}$ for any $\mu$ among them.
Checking this is similar to previous cases, and we leave the details to the reader.

Now suppose that $p=2$.

For the block with core $(3,1^2)$, we may take the four partitions $\langle e-1, e-2 \rangle = (9,3,2)$, $\langle e-2 \rangle = (8,4,2)$, $\langle e-2, e-1 \rangle = (6^2,2)$, and $\langle (e-1)^2, e-2 \rangle = (6,4^2)$, respectively.
One may check as above that the corresponding submatrix is (\ref{targetmatrix}) in characteristics $0$ and $2$.

For the block with core $(4,2^2,1^2)$, we may take the four partitions $\langle e-1, e-2 \rangle = (10,4,3,1^2)$, $\langle e-2 \rangle = (9,5,3,1^2)$, $\langle e-2, e-1 \rangle = (7^2,3,1^2)$, and $\langle (e-1)^2, e-2 \rangle = (7,5^2,1^2)$, respectively.
One may check as above that the corresponding submatrix is (\ref{targetmatrix}) in characteristics $0$ and $2$.

For the block with core $(5,3,1^2)$, it suffices to note that the $\#$-automorphism on $\hhh$ (e.g.~see \cite[Exercise~3.14]{mathas}) induces an isomorphism between this block and the one with core $(5,3,1^2)' = (4,2^2,1^2)$.

If $p=2$ the block with core $(6,4,2^2,1^2)$ (i.e.~the Rouquier block) is not susceptible to our methods when $e=3$, and we must treat it separately.

For now we will assume that $e\geq 4$.
Under this assumption, we will take either the partition $\langle (e-1)^3 \rangle$ or the partition $\langle (e-1)^2, e-2 \rangle$ as our most dominant partition, along with the three partitions $\langle (e-1)^2, e-3 \rangle$, $\langle (e-2)^2, e-1 \rangle$, and $\langle e-3, e-2, e-1 \rangle$.
Then, for any pair of these partitions, we may apply \cref{thm:runnerrem} to remove all but four runners when computing the characteristic zero graded decomposition numbers.
One may easily check, using the system of runner swaps at the end of \cref{subsec:wt2fifthcase}, that up to Scopes equivalence, these decomposition numbers boil down to those of just 10 blocks, with the following cores (in the notation of \cref{subsec:wt2fifthcase}).
\begin{multicols}{4}
\begin{enumerate}[label=(\roman*)]
\item $\kappa_{e+1, e+1, 1}$,

\item $\kappa_{e+1, e+2, 3}$,

\item $\kappa_{e+1, e+1, e+1}$,

\item $\kappa_{e+1, e+1, 2e+1}$,

\item $\kappa_{e+1, 2e+1, e+1}$,

\item $\kappa_{e+1, 2e+1, 2e+1}$,

\item $\kappa_{2e+1, e+1, e+1}$,

\item $\kappa_{2e+1, e+1, 2e+1}$,

\item $\kappa_{2e+1, 2e+1, e+1}$,

\item $\kappa_{2e+1, 2e+1, 2e+1}$.
\end{enumerate}
\end{multicols}

Since none of the cores in cases (i)--(viii) satisfy both $p_{e-1} - p_{e-2}>2e$ and $p_{e-2} - p_{e-3}>2e$, we may remove an extra runner and handle them as above.
However, the cores (ix) and (x) each have $p_{e-1} - p_{e-2}=2e+1$ and $p_{e-2} - p_{e-3}=2e+1$, and would thus reduce to the Rouquier block with core $(6,4,2^2,1^2)$ if we attempted to strip off an extra runner.

One may check that if we choose the partitions  $\langle (e-1)^2, e-2 \rangle$, $\langle (e-1)^2, e-3 \rangle$, $\langle (e-2)^2, e-1 \rangle$, and $\langle e-3, e-2, e-1 \rangle$, it yields (\ref{targetmatrixalt}) as the corresponding submatrix of the graded decomposition matrix in each case.

Concretely, in each case the corresponding partitions are as follows.

\begin{enumerate}[label=(\roman*)]
\item[(ix)] $(15,12,8,6,3^2,1^3)$, $(15,12,5^2,3^3,2^2)$, $(15,8^2,6^2,4,1^3)$, $(15,8^2,6,3^3,2^2)$;

\item[(x)] $(16,13,9,7,4^2,2^3,1^3)$, $(16,13,6^2,4^3,3^2,1^3)$, $(16,9^2,7^2,5,2^3,1^3)$, $(16,9^2,7,4^3,3^2,1^3)$.
\end{enumerate}

Now, for any $e$, one may check that none of $\langle (e-1)^2, e-3 \rangle$, $\langle (e-2)^2, e-1 \rangle$, or $\langle e-3, e-2, e-1 \rangle$ induce up semi-simply to any $\langle i \rangle$ or $\langle i, k \rangle$ in either of these cases (i.e.~whenever $p_{e-1} - p_{e-2} > e$ and $p_{e-2} - p_{e-3} > e$), 
while $\langle (e-1)^2, e-2 \rangle$ doesn't induce up semi-simply to any $\langle i \rangle$ or $\langle i, k \rangle$ in either case (i.e.~whenever $p_{e-1} - p_{e-2}, p_{e-2} - p_{e-3} > 2e$).
Thus $a_{\nu \mu} = 0$ whenever $\mu$ is any one of our chosen partitions, and the result follows as in the previous sections.

It is worth noting that even though our chosen partitions involve sliding beads on only three runners, if we use runner-removal to reduce to abacus displays with three runners, our partitions are not all $3$-regular.

We now return to the Rouquier block when $e=3$ and $p=2$, and will directly show that this block is Schurian-infinite.

Note that we may use the LLT algorithm to compute the characteristic $0$ decomposition matrices, and then apply \cref{thm:wt3adj} (along with \cite[Proposition~3.5]{faytan06} in order to determine when a partition induces almost semi-simply to another) to determine those in characteristic $2$.
In the block with core $(6,4,2^2,1^2)$, the five most dominant $3$-regular partitions are $\la_1 := (15,4,2^2,1^2)$, $\la_2 := (12,7,2^2,1^2)$, $\la_3 := (12,4^2,3,1^2)$, $\la_4 := (9,7,5,2,1^2)$, and $\la_5 := (9,7,4,3,1^2)$.
We see that the characteristic $2$ decomposition matrix starts with the following 5 rows.
\[
\begin{array}{c|ccccc}
 & \la_1 & \la_2 & \la_3 & \la_4 & \la_5 \\\hline
\la_1 & 1 & \cdot & \cdot & \cdot & \cdot \\
\la_2 & 0 & 1 & \cdot & \cdot & \cdot \\
\la_3 & 1 & 1 & 1 & \cdot & \cdot \\
\la_4 & 1 & 0 & 0 & 1 & \cdot \\
\la_5 & 1 & 1 & 1 & 1 & 1 \\
\end{array}
\]
It follows that we have
\[
\spe{\la_1} = \D{\la_1}, \quad \spe{\la_2} = \D{\la_2}, \quad \Rad(\spe{\la_4}) \cong \D{\la_1}.
\]
We may compute that there are homomorphisms from $\spe{\la_1}$ and $\spe{\la_2}$ to $\spe{\la_3}$, so that $\Rad(\spe{\la_3}) \cong \D{\la_1} \oplus \D{\la_2}$.
For example, working with graded (column) Specht modules for KLR algebras \cite[Section~7]{kmr}, one may check that these homomorphisms are given by 
\[
z^{\la_1} \longmapsto (\psi_{16} \psi_{15} \psi_{14} \psi_{13} \psi_{17} \psi_{16} \psi_{18} \psi_{17} \dots \psi_{14} + \psi_{22} \psi_{21} \dots \psi_{13} \psi_{23} \psi_{22} \dots \psi_{16} \psi_{24} \psi_{23} \dots \psi_{14}) z^{\la_3}
\]
and
\[
z^{\la_2} \longmapsto \psi_{15} \psi_{14} \psi_{13} \psi_{17} \psi_{16} \psi_{19} \psi_{18} \psi_{17} \psi_{16} \psi_{15} \psi_{14} z^{\la_3}.
\]
It is easy to check that these define nonzero elements in $\spe{\la_3}$ -- replacing each generator $\psi_i$ with the corresponding basic transposition $s_i$ and acting naturally on the (column) initial $\la_3$-tableau yields a standard $\la_3$-tableau, and indeed each expression is equal to the element of the standard homogenous basis (c.f.~\cite[Corollary~7.20]{kmr}) indexed by that tableau, up to some lower order terms.

We may also compute that there are homomorphisms from $\spe{\la_1}$ and $\spe{\la_2}$ to $\spe{\la_5}$, so that $\D{\la_1} \oplus \D{\la_2}$ is a submodule of $\spe{\la_5}$, and that there are homomorphisms from $\spe{\la_3}$ and $\spe{\la_4}$ to $\spe{\la_5}$.
These are given by
\[
z^{\la_1} \longmapsto \psi_{19} \psi_{18} \psi_{17} \psi_{16} \psi_{20} \psi_{19} \psi_{18} \psi_{21} \psi_{20} \psi_{22} \psi_{21} \dots \psi_{13} \psi_{23} \psi_{22} \dots \psi_{16} \psi_{24} \psi_{23} \dots \psi_{14} z^{\la_5},
\]
\[
z^{\la_2} \longmapsto \psi_{15} \psi_{14} \psi_{13} \psi_{17} \psi_{16} \psi_{19} \psi_{18} \dots \psi_{14} \psi_{22} \psi_{21} \psi_{20} \psi_{19} \psi_{23} \psi_{22} \psi_{21} \psi_{24} \psi_{23} z^{\la_5},
\]
\[
z^{\la_3} \longmapsto \psi_{22} \psi_{21} \psi_{20} \psi_{19} \psi_{23} \psi_{22} \psi_{21} \psi_{24} \psi_{23} z^{\la_5},
\]
and
\[
z^{\la_4} \longmapsto \psi_{18} \psi_{17} \psi_{16} \psi_{15} \psi_{14} z^{\la_5}.
\]
One may check that composing homomorphisms from $\spe{\la_1}$ and $\spe{\la_2}$ to $\spe{\la_3}$ and from $\spe{\la_1}$ to $\spe{\la_4}$ with these latter homomorphisms gives nonzero homomorphisms, so that $\Soc(\spe{\la_5}) \cong \D{\la_1} \oplus \D{\la_2}$ and $\spe{\la_5}$ has heart $\D{\la_3} \oplus \D{\la_4}$.

It follows that we have extensions yielding $A^{(1)}_3$ with zigzag orientation as a subquiver of the Gabriel quiver, on vertices $\la_1$, $\la_3$, $\la_4$, and $\la_5$.
The result now follows by \cref{cor:Schurianinfinitequiver}.

\section{Blocks of weight at least 4}\label{sec:highwt}

Since we will deal with blocks of arbitrarily large weight, we must employ some new conventions for partitions and their abacus displays.

We will use the beta-numbers for an abacus display with $r$ beads, as in \cref{subsec:scopes}, and recall that we have fixed the integers $p_0 < p_1 < \dots < p_{e-1}$, determined by the core $\rho$, when looking at partitions in a block $B(\rho,w)$, as before.
We will use an $e$-quotient notation for our partitions, adapted so that it follows this ordering on $p_i$.
Ordering the runners starting with the runner containing the $p_{e-1}$ position, and then the runner containing the $p_{e-2}$ position, and so on until the $p_0$ runner, we may read a partition from each runner, considered as a $1$-runner abacus display.
We will use the shorthand notation $\varnothing^k$ to denote a string of $k$ components, each equal to the empty partition, in the $e$-quotient notation for a partition $\la$.

\begin{eg}
Let $e=4$, $\rho = (3,1^2)$, $\la = (8^2,2^2,1)$, and take $r=7$.
Then $\rho$ has beta numbers $(9,6,5,3,2,1,0)$, while $\la$ has beta numbers $(14,13,6,5,3,1,0)$, which are displayed on an abacus as below.
\[
\rho: \; \abacus(lmmr,bbbb,nbbn,nbnn,nnnn)
\qquad\qquad\qquad
\la: \; \abacus(lmmr,bbnb,nbbn,nnnn,nbbn)
\]
The partition $\la$ has weight $4$.
In the abacus display for its core, $\rho$, we can read off the integers $p_0 = 0$, $p_1 = 3$, $p_2 = 6$, and $p_3 = 9$.
In $e$-quotient notation, we write $\la = ((1),(2,1), \varnothing^2)$.
\end{eg}

While the $e$-quotient is usually only well-defined up to some cyclic permutation, our variant that orders the runners in terms of the integers $p_i$ is unique.
In other words, the $e$-quotient notation for a partition $\la$ does not change if we choose a different $r$, though the abacus display itself does.

It was shown in \cref{sec:wt2} that weight $2$ blocks are Schurian-infinite, by breaking into five cases depending on the the comparative values of $p_{e-1}$, $p_{e-2}$, and $p_{e-3}$ for the block's core $\rho$.
When $p_{e-1} - p_{e-2} >e$, handled in \cref{subsec:wt2thirdcase,subsec:wt2fifthcase}, the methods in characteristic $2$ did not directly appeal to \cref{prop:matrixtrick}, as no such submatrices are available in such cases.
Instead, results of Fayers about extensions between simples \cite{faywt2} were used directly, and when $p_{e-2} - p_{e-3} > e$ and $e=3$, separate ad hoc methods were required.

We will initially avoid the case $e=3$, $p=2$, and thus have a bit more flexibility to work with.
The following result essentially deduces the same result as \cref{subsec:wt2thirdcase,subsec:wt2fifthcase} -- that $B(\rho,2)$ is Schurian-infinite when $p_{e-1} - p_{e-2} >e$ -- using \cref{prop:matrixtrick} directly, under the additional assumption that we're not in this difficult $e=3$, $p=2$ case.
This lemma will be useful to quickly prove that all blocks of weight at least $4$ are Schurian-infinite, in conjunction with those submatrices already determined in \cref{sec:wt2}.

\begin{lem}\label{lem:wt2rouquish}
Suppose $e\geq 4$ and $\rho$ is a core satisfying $p_{e-1} - p_{e-2} >e$.
Define four partitions as follows.
\[
\la^{(1)} = ((1),(1),\varnothing^{e-2}), \qquad \la^{(2)} = ((1),\varnothing,(1),\varnothing^{e-3}).
\]
\begin{enumerate}[label=(\roman*)]
\item
If $p_{e-2} - p_{e-3} <e$, then define 
\[
\la^{(3)} = (\varnothing,(2),\varnothing^{e-2}), \qquad \la^{(4)} = (\varnothing,\varnothing,(2),\varnothing^{e-3}).
\]

\item
If $p_{e-2} - p_{e-3} >e$, then define 
\[
\la^{(3)} = (\varnothing,(1^2),\varnothing^{e-2}), \qquad \la^{(4)} = (\varnothing,(1),(1),\varnothing^{e-3}).
\]
\end{enumerate}
Then the four partitions $\la^{(1)}$, $\la^{(2)}$, $\la^{(3)}$, and $\la^{(4)}$ give (\ref{targetmatrixalt}) as a submatrix of the graded decomposition matrix in any characteristic.
It follows from \cref{prop:matrixtrick} that $B(\rho,2)$ is Schurian-infinite.
\end{lem}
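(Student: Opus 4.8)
The plan is to verify the hypotheses of \cref{prop:matrixtrick} for the matrix (\ref{targetmatrixalt}): first compute the characteristic-$0$ graded decomposition numbers of the four partitions, and then control their behaviour in positive characteristic. I would begin by noting that all four partitions lie in $B(\rho,2)$ and, in the $e$-quotient notation, are supported only on the runners carrying $p_{e-1}$, $p_{e-2}$, and $p_{e-3}$. Rewriting them in the $\langle\,\rangle$ notation of \cref{subsec:abacus} (using $\langle i\rangle \leftrightarrow (2)$, $\langle i^2\rangle \leftrightarrow (1^2)$, $\langle i,j\rangle \leftrightarrow (1),(1)$ on the relevant runners), in case (i) they become $\langle e-2,e-1\rangle$, $\langle e-3,e-1\rangle$, $\langle e-2\rangle$, $\langle e-3\rangle$, and in case (ii) they become $\langle e-2,e-1\rangle$, $\langle e-3,e-1\rangle$, $\langle (e-2)^2\rangle$, $\langle e-3,e-2\rangle$.

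For the characteristic-$0$ computation I would proceed exactly as in \cref{subsec:wt2thirdcase} (which handles case (i), where $p_{e-2}-p_{e-3}<e$) and \cref{subsec:wt2fifthcase} (which handles case (ii), where $p_{e-2}-p_{e-3}>e$): apply \cref{thm:runnerrem} to strip away all but the three runners $e-3,e-2,e-1$, reducing to the $e=3$ setting, and then use the graded Scopes equivalence \cref{prop:grScopes} to pass down to one of the finitely many minimal cores appearing in those sections. In each such small block the graded decomposition numbers of the images of the four partitions can be read off directly (via \cite[Appendix~B]{mathas} or the LLT algorithm~\cite{LLT}), and listing the partitions in decreasing dominance order one checks that they assemble into (\ref{targetmatrixalt}).

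The remaining point, and the one requiring the most care, is the characteristic-$p$ hypothesis $d^{e,p}_{\la\mu}(1)=d^{e,0}_{\la\mu}(1)$ demanded by \cref{prop:matrixtrick}. For $p\neq 2$ this is immediate from \cref{thm:jamesconj}, since the block has weight $2$. For $p=2$ I would invoke \cref{thm:wt2adjust}: it suffices to show that for each of our four column-labels $\mu$ one has $a_{\nu\mu}(v)=\delta_{\nu\mu}$ for all $\nu$. The only candidates of the special forms $\langle i\rangle$ or $\langle i,i+1\rangle$ among our partitions are $\langle e-2\rangle,\langle e-3\rangle$ in case (i) and $\langle e-3,e-2\rangle,\langle e-2,e-1\rangle$ in either case; in every instance one of the required conditions fails, since $p_{e-1}-p_{e-2}>e$ forces $_{e-2}0_{e-1}$ (killing any $_{e-2}1_{e-1}$ requirement) while the sign of $p_{e-2}-p_{e-3}-e$ forces $_{e-3}1_{e-2}$ in case (i) or $_{e-3}0_{e-2}$ in case (ii) (killing the remaining requirements). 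The partition $\langle e-3,e-1\rangle$ is not of either special form, and $\langle (e-2)^2\rangle$ is never an adjustment \emph{target}. Hence the adjustment matrix acts trivially on these columns, so $d^{e,2}_{\la\mu}(v)=d^{e,0}_{\la\mu}(v)$ and in particular the integral decomposition numbers agree.

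With (\ref{targetmatrixalt}) established in characteristic $0$ and the equality $d^{e,p}_{\la\mu}(1)=d^{e,0}_{\la\mu}(1)$ in every characteristic, \cref{prop:matrixtrick} applies verbatim and yields that $B(\rho,2)$ is Schurian-infinite. The main obstacle is precisely the $p=2$ bookkeeping: one must be certain that no nonzero adjustment entry disturbs the triangular shape of the chosen submatrix, which is exactly why these four partitions are selected rather than the more dominant sets used in \cref{subsec:wt2thirdcase,subsec:wt2fifthcase} (where $\langle i^2\rangle$-type targets do receive adjustments at $p=2$).
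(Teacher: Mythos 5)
Your translation of the four partitions into the $\langle\,\cdot\,\rangle$ notation is correct, and your $p=2$ argument via \cref{thm:wt2adjust} (checking that the pyramid conditions $_{e-2}0_{e-1}$ and $_{e-3}1_{e-2}$ or $_{e-3}0_{e-2}$ kill every potential adjustment column among the four labels) is sound and matches what the paper does. But there is a genuine gap in your characteristic-$0$ computation: you cannot strip down to three runners. \cref{thm:runnerrem} requires the column label $\mu^-$ to be $(e-1)$-regular after each removal, and for your chosen partitions this fails once you reach $e=3$. Concretely, in case (i) the $3$-runner blocks that arise are those with cores $\rho^{(i)}$ from \cref{subsec:wt2thirdcase}; in the Scopes-minimal block with core $(2)$ the image of $\la^{(2)}=\langle e-3,e-1\rangle$ is $(5,1^3)$, which is $3$-singular (and the same happens for cores $(3,1)$ and $(4,2)$, where one gets $(6,1^4)$ and $(7,2,1^3)$). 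In case (ii), in the minimal block with core $(3,1^2)$ the images of $\la^{(2)}$ and $\la^{(4)}=\langle e-3,e-2\rangle$ are $(6,1^5)$ and $(3^2,2,1^3)$ respectively, both $3$-singular -- indeed these are among the singular partitions $\la_6,\la_7$ listed explicitly in \cref{subsec:wt2fifthcase}. So the columns of your would-be submatrix simply do not exist in the $e=3$ decomposition matrices, and the entries cannot be ``read off'' from \cite[Appendix~B]{mathas} or the LLT algorithm there. This is exactly the pitfall the paper flags in the remark at the end of \cref{subsec:wt3fifthcase}: partitions that slide beads on only three runners need not remain $3$-regular after reduction to three runners.

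The paper's proof avoids this by applying \cref{thm:runnerrem} only $e-4$ times, keeping \emph{four} runners -- this is where the hypothesis $e\geq 4$ earns its keep -- and then using Scopes equivalence among the resulting $e=4$ blocks to reduce to five concrete blocks, with cores $(3)$, $(4,1^2)$, $(4,1^3)$ in case (i) and $(5,2^2)$, $(6,3^2,1^3)$ in case (ii), where all four images stay $4$-regular and the submatrix (\ref{targetmatrixalt}) is verified by the LLT algorithm. A secondary inaccuracy in your write-up: \cref{subsec:wt2thirdcase,subsec:wt2fifthcase} never computed decomposition numbers for your partition sets (neither $\langle e-3,e-1\rangle$ nor the case-(ii) quadruple appears there, and the matrices found there are (\ref{targetmatrix}) and (\ref{targetmatrixstar}), not (\ref{targetmatrixalt})), so even absent the regularity obstruction you would need fresh computations rather than a citation to those subsections. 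To repair your proof, replace the reduction to $e=3$ by the paper's reduction to $e=4$ and verify the five small blocks directly.
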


\begin{proof}
We essentially argue as in the proofs in \cref{sec:wt2}.
We may first apply \cref{thm:runnerrem} $e-4$ times to determine $d_{\la\mu}^{e,0}$ by examining the leading four runners alone.
One may check that up to Scopes equivalence, the remaining partitions live in one of five blocks, with cores $(3)$, $(4,1^2)$, and $(4,1^3)$ satisfying $p_{e-2} - p_{e-3} <e$, and cores $(5,2^2)$ and $(6,3^2,1^3)$ satisfying $p_{e-2} - p_{e-3} >e$.
Since we are in a block of weight 2, the decomposition matrix in characteristic $p$ for $p \geq 3$ is identical to the decomposition matrix in characteristic $0$, by~\cref{thm:jamesconj}.
Hence in each case, we may easily verify, for example by the LLT algorithm, that the corresponding submatrices are each (\ref{targetmatrixalt}) if $p\neq 2$.
By applying \cref{thm:wt2adjust}, we may check that these submatrices are in fact identical when $p=2$.
\end{proof}

\subsection{When $e\neq 3$ or $p \neq 2$}

\begin{thm}\label{thm:mostblocks}
Let $e\geq 3$, and $w\geq 4$, and let $\rho$ be an $e$-core.
Unless $e=3$ and $p = 2$, the weight $w$ block $B(\rho,w)$ of $\hhh$ is Schurian-infinite.
\end{thm}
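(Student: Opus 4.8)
The plan is to reduce every weight $w\geq 4$ block to the weight $2$ computations already carried out in \cref{sec:wt2}, by peeling off a common weight-$(w-2)$ part with the row- and column-removal theorems. Concretely, I would first use the graded Scopes equivalence of \cref{prop:grScopes} to replace $B(\rho,w)$ by a Scopes-equivalent block whose core is spread out enough that the lowest beads on the top runners (those carrying $p_{e-1},p_{e-2},p_{e-3}$) sit well above the remaining beads on some low runner. In such a block I would choose four partitions $\la^{(1)},\dots,\la^{(4)}$ (or five, if a $D^{(1)}_4$ matrix is needed) whose abacus displays agree in a fixed way below a certain row -- encoding a single weight-$(w-2)$ partition $\nu$ -- and differ only among the high beads, where they realise exactly the four weight-$2$ partitions $\mu^{(1)},\dots,\mu^{(4)}$ used in the corresponding case of \cref{sec:wt2}. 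The comparison of the gaps $p_{e-1}-p_{e-2}$ and $p_{e-2}-p_{e-3}$ with $e$ is inherited by the top portion of the core, so the case division is exactly the one from \cref{sec:wt2} (and from \cref{lem:wt2rouquish} when $p_{e-1}-p_{e-2}>e$).

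With such partitions in hand, row/column removal does the work. Cutting between the high and low beads, \cref{thm:rowremFock} gives, in characteristic $0$,
\[
d^{e,0}_{\la^{(i)}\la^{(j)}}(v) = d^{e,0}_{\mu^{(i)}\mu^{(j)}}(v)\, d^{e,0}_{\nu\nu}(v) = d^{e,0}_{\mu^{(i)}\mu^{(j)}}(v),
\]
since the second factor is the diagonal entry $d^{e,0}_{\nu\nu}(v)=1$. Thus the $\la^{(i)}$ reproduce exactly the target submatrix -- one of (\ref{targetmatrixalt}), (\ref{targetmatrix}), or (\ref{targetmatrixstar}) -- produced in the relevant weight-$2$ case. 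For the characteristic-$p$ hypothesis of \cref{prop:matrixtrick} I would invoke the ungraded row/column-removal of Donkin, \cref{thm:rowremDecomp}, which gives the same factorisation at $v=1$ in any characteristic:
\[
d^{e,p}_{\la^{(i)}\la^{(j)}}(1) = d^{e,p}_{\mu^{(i)}\mu^{(j)}}(1)\, d^{e,p}_{\nu\nu}(1) = d^{e,p}_{\mu^{(i)}\mu^{(j)}}(1).
\]
The weight-$2$ factor is independent of $p$ -- by \cref{thm:jamesconj} when $p\neq 2$, and by the explicit characteristic-$2$ verifications in \cref{sec:wt2} and \cref{lem:wt2rouquish} when $p=2$ and $e\geq 4$. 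Hence $d^{e,p}_{\la^{(i)}\la^{(j)}}(1)=d^{e,0}_{\la^{(i)}\la^{(j)}}(1)$, and \cref{prop:matrixtrick} applies, giving Schurian-infiniteness. The single point where this argument breaks is precisely when the weight-$2$ input fails to be characteristic-independent, namely $e=3$ and $p=2$ (where \cref{lem:wt2rouquish} is unavailable and the relevant weight-$2$ block needed the ad hoc treatment of \cref{sec:wt2}); this is exactly the excluded case.

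The main obstacle I anticipate is purely combinatorial bookkeeping: realising, for an arbitrary core and after the Scopes reduction, four partitions of $B(\rho,w)$ whose beta-number sequences split cleanly into a top block encoding the chosen $\mu^{(i)}$ and a common bottom block encoding a single $\nu$, so that a single value of $r$ simultaneously satisfies the row- (or column-) sum hypothesis of \cref{thm:rowremFock,thm:rowremDecomp} for all pairs $(\la^{(i)},\la^{(j)})$ -- this holds because the top parts $\mu^{(i)},\mu^{(j)}$ lie in a common weight-$2$ block and so have equal size -- and so that the $e$-regularity needed to index simple modules is retained. I would organise this along the five-case division of \cref{sec:wt2} according to $p_{e-1}-p_{e-2}$ and $p_{e-2}-p_{e-3}$ versus $e$, checking in each case that the top portion of the (spread-out) core lands in the weight-$2$ case yielding one of the matrices of \cref{prop:matrixtrick}. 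Apart from this set-up, no new decomposition-number computation should be required, since everything is reduced to the weight-$2$ submatrices already established.
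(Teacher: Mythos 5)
Your reduction-to-weight-2 strategy is the right instinct -- it is in fact the paper's -- but the way you set up the cut contains a genuine gap. You propose to first ``spread out'' the core by the graded Scopes equivalence so that the beads carrying $p_{e-1},p_{e-2},p_{e-3}$ sit far above a low runner on which the common weight-$(w-2)$ partition $\nu$ is placed. But Scopes moves are only available when two adjacent runners differ by $k\geq w$ beads, and Scopes classes are finite: many blocks admit no such move at all. For instance, for the principal block $B(\vn,w)$ every pair of adjacent runners of the core's abacus display differ by at most one bead, so this block is alone in its Scopes class and cannot be spread out; the same applies to any block whose class contains no suitably spread representative. For such blocks your four partitions cannot be arranged so that the ``high'' beads (carrying the weight-2 configurations) all lie above the ``low'' beads carrying $\nu$: sliding a low bead down $w-2$ spaces raises its position by $(w-2)e$, which overtakes the top beads when the gaps are small, so the row-sum hypothesis of \cref{thm:rowremFock,thm:rowremDecomp} cannot be met at a single cut and the factorisation $d_{\la^{(i)}\la^{(j)}}=d_{\mu^{(i)}\mu^{(j)}}\,d_{\nu\nu}$ is simply not available.

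The paper avoids this by inverting your picture: the common weight-$(w-2)$ part is loaded onto the single lowest bead of the display (the bead at $p_{e-1}$), i.e.~onto the \emph{first row} of each of the four (or five) partitions, which automatically has the largest beta-number for \emph{every} core, so the cut after row one ($r=1$ in \cref{thm:rowremFock,thm:rowremDecomp}) is always legitimate with no Scopes preprocessing. The price -- which your claim that ``the case division is exactly the one from \cref{sec:wt2}'' misses -- is that removing the first row removes the bead at $p_{e-1}$, and since a core's beads are pushed up, the new lowest bead on that runner sits at $p_{e-1}-e$; when $p_{e-1}-p_{e-2}<e$ the runner ordering of the residual weight-2 block changes, and one must additionally compare $p_{e-1}-p_{e-3}$ and $p_{e-1}-p_{e-4}$ against $e$ and $2e$. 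This is exactly why the paper's proof runs to the eleven cases of Table~\ref{tab1} rather than the five cases of \cref{sec:wt2} together with \cref{lem:wt2rouquish}. If you replace your spreading step by the paper's first-row device and extend your case analysis to track the new runner ordering, the remainder of your argument -- row removal in characteristic $0$ and at $v=1$ in characteristic $p$ via \cref{thm:rowremDecomp}, characteristic-independence of the weight-2 inputs, then \cref{prop:matrixtrick} -- goes through essentially as you describe.
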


\begin{proof}
In our proof, we consider 11 different cases, two of which then split into two further cases. We describe these cases, together with our method of dealing with each of them, in Table \ref{tab1}. 

Our method of proof will be to reduce to weight 2 and then apply the results of \cref{sec:wt2}.
In order to do this, we will slide the lowest bead on the abacus display of $\rho$ down $w-2$ spaces on each of four (or five) partitions, and then use the row-removal results of \cref{thm:rowremFock,thm:rowremDecomp} to remove this bead (i.e.~the first row of each of the four partitions).
To then apply results of \cref{sec:wt2}, we must know the position of the bead above the lowest bead on the abacus display for $\rho$, in order to know the new ordering on runners.
If $p_{e-1}-p_{e-2} > e$, then the ordering on runners is unchanged -- this is reflected in our first seven cases below, five of which are essentially identical to the five subsections of \cref{sec:wt2} once we've removed the first row, while the other two are handled by \cref{lem:wt2rouquish}.

Thus we will list the four partitions required in each case below.
The proof then follows by applying the known decomposition numbers in weight 2, where our chosen partitions were shown in \cref{sec:wt2} and \cref{lem:wt2rouquish} to satisfy the conditions of \cref{prop:matrixtrick}.

We summarise our results in Table \ref{tab1}, where the following shorthand is used: 
\begin{align*}
&\alpha \text{ denotes } p_i -p_j <e, && \beta \text{ denotes } e<p_i -p_j <2e, \\
 &\gamma \text{ denotes } 2e < p_i-p_j,&& \delta \text{ denotes } e< p_i -p_j.\qedhere
\end{align*}

\afterpage{%
    \clearpage
    \thispagestyle{empty}
\begin{landscape}
\begin{center}
\renewcommand{\arraystretch}{1.2}
\[\begin{array}{|m{10pt}m{10pt}m{10pt}|m{15pt}m{15pt}|m{10pt}m{10pt}m{10pt}|m{15pt}m{15pt}|ll|ll|l} \hline
\multicolumn{3}{|c|}{p_{e-1}-p_{e-2}} & \multicolumn{2}{c|}{p_{e-2}-p_{e-3}} & 
\multicolumn{3}{c|}{p_{e-1}-p_{e-3}} & \multicolumn{2}{c|}{p_{e-1}-p_{e-4}} & \multicolumn{4}{c|}{} \\ 
$\alpha$ &$\beta$&$\gamma$ &\multicolumn{1}{c}{\alpha} &\multicolumn{1}{c|}{\delta} & $\alpha$ &  $\beta$& $\gamma$ & \multicolumn{1}{c}{\alpha} & \multicolumn{1}{c|}{\delta} & \multicolumn{4}{c|}{} \\ \hline 
&$\bullet$&&\multicolumn{1}{c}{\bullet}&&&$\bullet$&&&&((w-2,2), \vn^{e-1}) & ((w-2),(2),\vn^{e-2}) & \text{(\ref{targetmatrix})} &\text{\cref{subsec:wt2firstcase}} \\
&&&&&&&&&&((w-2),\vn,(2),\vn^{e-3}) & ((w-2,1),\vn,(1),\vn^{e-3}) && \\ \hline 
&$\bullet$&&\multicolumn{1}{c}{\bullet} &&&&$\bullet$&&&((w-2,2), \vn^{e-1}) & ((w-2),(2),\vn^{e-2}) & \text{(\ref{targetmatrix})} &\text{\cref{subsec:wt2secondcase}}\\
&&&&&&&&&&((w-2,1),(1),\vn^{e-2}) & ((w-2,1^2),\vn^{e-1}) &&\\ \hline 
&&$\bullet$&\multicolumn{1}{c}{\bullet}&&&&&& &((w-2,2), \vn^{e-1}) & ((w-2,1),(1),\vn^{e-2}) & \text{(\ref{targetmatrix})} &\text{\cref{subsec:wt2thirdcase}} \hspace{12pt} p\neq2\\
&&&&&&&&&&((w-2),(2),\vn^{e-2}) & ((w-2),\vn,(2),\vn^{e-3}) &  & \\ \hline 
&&$\bullet$&\multicolumn{1}{c}{\bullet}&&&&&& &((w-2,1),(1), \vn^{e-2}) & ((w-2,1),\vn,(1),\vn^{e-3}) &\text{(\ref{targetmatrixalt})} & \text{\cref{lem:wt2rouquish}} \hspace{28pt} p=2  \\
&&&&&&&&&&((w-2),(2),\vn^{e-2}) & ((w-2),\vn,(2),\vn^{e-3}) & &\hspace{82pt} e \geq 4 \\ \hline 
&$\bullet$ & & &$\bullet$ &&&&&& ((w-2,2),\vn^{e-1}) & ((w-2),(2),\varnothing^{e-2}) & \text{(\ref{targetmatrix})} &\text{\cref{subsec:wt2fourthcase}}\\
&&&&&&&&&&((w-2,1),(1),\vn^{e-2}) & ((w-2,1^2),\vn^{e-1}) && \\ \hline
&&$\bullet$ &&$\bullet$ &&&&&& ((w-2,2),\vn^{(e-1)}) & ((w-2,1^2),\vn^{e-1}) &  \text{(\ref{targetmatrixstar})} &\text{\cref{subsec:wt2fifthcase}} \hspace{12pt} p\neq 2\\
&&&&&&&&&&((w-2,1),(1),\vn^{e-2}) & ((w-2),(2),\vn^{e-2}) & &\\
&&&&&&&&&&((w-2),(1^2),\vn^{e-2}) &&&  \\ \hline 
&&$\bullet$ &&$\bullet$ &&&&&& ((w-2,1),(1),\vn^{(e-2)}) & ((w-2,1),\vn,(1),\vn^{e-3}) &  \text{(\ref{targetmatrixalt})} &\text{\cref{lem:wt2rouquish}} \hspace{27pt} p =2\\
&&&&&&&&&&((w-2),(1^2),\vn^{e-2}) & ((w-2),(1),(1),\vn^{e-3}) & &\hspace{81pt} e \geq 4 \\ \hline
&&&&&&&&\multicolumn{1}{c}{\bullet} && ((w-2),(2),\vn^{e-2}) & ((w-2),\vn,(2),\vn^{e-3}) & \text{(\ref{targetmatrix})} &\text{\cref{subsec:wt2firstcase}} \\
&&&&&&&&&&((w-2),\vn^2,(2),\vn^{e-4}) & ((w-2),(1),\vn,(1),\vn^{e-4}) && \\ \hline
&&&&& $\bullet$&&&& \multicolumn{1}{c|}{\bullet} & ((w-2),(2),\vn^{e-2}) & ((w-2),\vn,(2),\vn^{e-3}) &  \text{(\ref{targetmatrix})} &\text{\cref{subsec:wt2firstcase}}\\
&&&&&&&&&&((w-2,1),\vn^{e-1}) & ((w-2,1),(1),\vn^{e-2}) && \\ \hline 
$\bullet$ &&&\multicolumn{1}{c}{\bullet} &&& $\bullet$ &&&& ((w-2),(2),\vn^{e-2}) &((w-2,2),\vn^{e-1}) &  \text{(\ref{targetmatrix})} &\text{\cref{subsec:wt2firstcase}}\\
&&&&&&&&&& ((w-2),\vn,(2),\vn^{e-3}) & ((w-2),(1),(1),\vn^{e-3}) && \\ \hline 
$\bullet$ &&& & $\bullet$ && && &&((w-2),(2),\vn^{e-2}) & ((w-2,2),\vn^{e-1}) &  \text{(\ref{targetmatrix})} &\text{\cref{subsec:wt2secondcase,subsec:wt2fourthcase}} \\ 
&&&&&&&&&&((w-2,1),(1),\vn^{e-2}) & ((w-2),(1^2),\vn^{e-3}) &&  \\ \hline 
\end{array}\]
\captionof{table}{A case-by-case analysis of \cref{thm:mostblocks}} \label{tab1}
\end{center}
\end{landscape}
    \clearpage
}
\end{proof}

\begin{eg}
Let $w=e=5$ and $p\geq 0$.
Take $\rho=(10,6,4,3,2^2,1^4)$ and consider the block $B(\rho,5)$.
The abacus display of $\rho$ is given by
\[\abacus(lmmmr,bbbbb,nbbbb,nbbnb,nbnnb,nnnnb)\]
and we see that 
$p_0=0$, $p_1=8$, $p_2=12$, $p_3=16$ and $p_4=24$ so that
$e<p_{e-1} - p_{e-2} <2e$, $p_{e-2}-p_{e-3}<e$ and $p_{e-1}-p_{e-3}>2e$
and we are in the second case in Table \ref{tab1}.
We therefore take the partitions given by the following four abacus displays.
\[
\abacus(lmmmr,bbbbb,nbbbb,nbbnb,nbnnn,nnnnn,nnnnb,nnnnn,nnnnb) \qquad 
\abacus(lmmmr,bbbbb,nbbbb,nbbnb,nnnnb,nnnnn,nbnnn,nnnnn,nnnnb) \qquad 
\abacus(lmmmr,bbbbb,nbbbb,nbbnb,nnnnn,nbnnb,nnnnn,nnnnn,nnnnb) \qquad 
\abacus(lmmmr,bbbbb,nbbbb,nbbnn,nbnnb,nnnnb,nnnnn,nnnnn,nnnnb)
\]
To compute the partial decomposition matrix indexed by these partitions, we first apply row removal, as described in \cref{thm:rowremFock,thm:rowremDecomp}, to remove the first bead from each abacus configuration.
We are then in a block of weight 2 and we follow \cref{subsec:wt2firstcase} to show that the partial decomposition matrix indexed by these partitions is (\ref{targetmatrix}) in characteristic $0$, and that the corresponding ungraded decomposition numbers are characteristic-free, so that applying \cref{prop:matrixtrick} yields that the block $B(\rho,5)$ is Schurian-infinite.
%
\end{eg}

\subsection{When $e=3$ and $p=2$}
We now consider the remaining cases.
For the remainder of this section, fix $e=3$ and $p=2$.

If a core $\rho$ has an abacus display with $s_i$ beads on runner $i$ for $i=0,1,2$ then we will write it as a tuple $[s_0,s_1,s_2]$. By placing $p_0$ in the top left position, we can ensure that there is just one bead on the leftmost runner, so $s_0=1$ while the middle and right-hand runners have $s_1\geq 1$ and $s_2 \geq 1$ beads on, respectively.
By applying the Scopes equivalences of \cref{prop:grScopes}, we may further assume that $s_1 \leq w$ and $s_2 \leq s_1 + w - 1$.
In this way, the triples $[1,s_1,s_2]$ index Scopes classes of blocks of a fixed weight, and we thus identify the triples with the corresponding Scopes classes.
We will sometimes write $B([1,s_1,s_2],w)$ for the associated block, rather than $B(\rho,w)$.

Note that we may also make the following observation to simplify the task of proving that each $B(\rho,w)$ is Schurian-infinite.
If we replace the core $\rho$ with its conjugate, $\rho'$, then $B(\rho,w)$ is isomorphic to $B(\rho',w)$, with an isomorphism being induced by the $\#$-automorphism on $\hhh$ (e.g.~see \cite[Exercise~3.14]{mathas}).
In such a situation, we will say that two Scopes classes are conjugate.

\begin{thm}\label{thm:mostblocks32}
Let $w\geq 4$, and let $\rho$ be a $3$-core.
If $\rho$ satisfies $p_{2} - p_{1} <2e$, the weight $w$ block $B(\rho,w)$ of $\hhh$ is Schurian-infinite.
\end{thm}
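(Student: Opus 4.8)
The plan is to reduce, via row removal, to a weight~$2$ block already treated in \cref{sec:wt2}, and to use the hypothesis $p_2-p_1<2e$ to force this weight~$2$ block to be one of the tractable cases rather than the troublesome fifth case (the $e=3$, $p=2$ weight~$2$ Rouquier-type block with core $(3,1^2)$) for which \cref{prop:matrixtrick} is unavailable. First I would translate the hypothesis into the bead-count language fixed at the start of this section. Writing $\rho=[1,s_1,s_2]$ and using that the lowest bead on runner~$i$ of the core sits at $p_i=i+3(s_i-1)$, one computes $p_2-p_1=3(s_2-s_1)+1$, so that $p_2-p_1<2e=6$ is equivalent to $s_2-s_1\leq 1$, that is $s_2\in\{s_1,s_1+1\}$ (recall $s_2\geq s_1$ since $p_2>p_1$).

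The construction proceeds by adjoining a first row. Since $p_2>p_1>p_0$, the lowest bead of the abacus display of $\rho$ lies on runner~$2$, at position $p_2$. I would begin from the weight~$2$ block $A$ whose core is obtained from $\rho$ by deleting this bead, take the four standard partitions of $A$ supplied by the relevant case of \cref{sec:wt2}, and lift each to a weight~$w$ partition in $B(\rho,w)$ by adjoining a bead at position $p_2+3(w-2)$, which restores the core to $\rho$ and raises the weight to $2+(w-2)=w$. Because $w\geq 4$, this adjoined bead, at position $p_2+3(w-2)\geq p_2+6$, lies strictly below every bead produced by the two units of weight-$2$ modification of the remaining upper beads, so it is genuinely the lowest bead and the four lifted partitions share a common first part. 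Then \cref{thm:rowremFock} computes the characteristic~$0$ graded submatrix of the lifted partitions as the product of the trivial $1\times1$ matrix from the first rows with the weight~$2$ graded submatrix, while \cref{thm:rowremDecomp} does the same for the ungraded characteristic~$2$ numbers; in particular both submatrices agree with those of $A$.

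It then remains to verify that $A$ is always a tractable case. Deleting the runner~$2$ bead replaces $s_2$ by $s_2-1$, and a short computation (after re-sorting the runners, which is needed precisely when $s_2=s_1$) shows that the two topmost runners of $A$ now differ in position by at most $2<e$. Hence $A$ never has both of its top gaps exceeding $e$, so it falls into the first, second, or fourth case of \cref{sec:wt2}, and in particular never into the fifth case, the single block our methods cannot treat. In each of those cases it was shown in \cref{sec:wt2}, using \cref{thm:wt2adjust} and \cref{thm:wt2ext}, that the relevant submatrix is $(\ref{targetmatrix})$ and that $d^{e,2}_{\la\mu}(1)=d^{e,0}_{\la\mu}(1)$ for the chosen partitions. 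Transporting this characteristic-freeness through row removal, the lifted partitions satisfy both hypotheses of \cref{prop:matrixtrick}, which yields that $B(\rho,w)$ is Schurian-infinite. Throughout I would enlarge the bead count $r$ as needed so that all runners remain non-empty, avoiding degenerate abacus bookkeeping (for instance when $s_1=s_2=1$ and $\rho=\vn$).

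The main obstacle is the combinatorial verification in the previous paragraph: one must confirm that for every admissible $(s_1,s_2)$ with $s_2-s_1\leq 1$ the deletion of the runner~$2$ bead lands in a tractable weight~$2$ block, keeping careful track of the re-sorting of runners, and hence of which four partitions and which case of \cref{sec:wt2} are invoked, in the boundary situation $s_2=s_1$. This is exactly where $p_2-p_1<2e$ enters: it is what guarantees that we never reduce to the one weight~$2$ block our methods cannot handle, and it is why the complementary range $p_2-p_1\geq 2e$ must be dealt with separately.
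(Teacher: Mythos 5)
Your proposal is correct and is essentially the paper's own argument: the paper proves this theorem by running the proof of \cref{thm:mostblocks} (slide the lowest bead down $w-2$ places, row-remove the first row via \cref{thm:rowremFock,thm:rowremDecomp}, and invoke the characteristic-free weight~$2$ submatrices of \cref{sec:wt2} through \cref{prop:matrixtrick}), observing that the hypothesis $p_2-p_1<2e$ exactly kills the two cases of Table~\ref{tab1} that reduce to the weight~$2$ blocks with top gap exceeding $e$ -- the cases that for $e=3$, $p=2$ have no usable submatrix. Your unified gap analysis (after deleting the bead at $p_2$, the new top gap is $3-(p_2-p_1)$, $p_2-p_1-3$, or $p_1-p_0$ in the degenerate consecutive case, hence always $<e$) is a correct, slightly more uniform packaging of the same reduction.

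One side claim is wrong, though harmlessly so: $p_2>p_1$ does \emph{not} force $s_2\geq s_1$ in the normalisation $[1,s_1,s_2]$, since the $p_i$ are the sorted lowest-bead positions rather than the positions on physical runners $1$ and $2$. When $s_1>s_2$ one has $p_2-p_1=3(s_1-s_2)-1$, so the hypothesis $p_2-p_1<2e$ also admits $s_1-s_2\in\{1,2\}$ (e.g.\ $[1,3,2]$ has $p_2-p_1=2$); this is precisely how the theorem is applied in the proof of \cref{thm:wlargemain}. Your proposed verification sweep "for every admissible $(s_1,s_2)$ with $s_2-s_1\leq 1$" would therefore miss these blocks, and the remark that re-sorting is needed "precisely when $s_2=s_1$" fails there too. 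Since your actual deletion-and-re-sorting argument is phrased at the level of the $p_i$ and never uses $s_2\geq s_1$, the fix is only to state the case analysis in terms of $p_2-p_1\in\{1,2,4,5\}$ (or to extend the bead-count range to include $s_1-s_2\leq 2$), after which the proof goes through as written.
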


\begin{proof}
The proof is almost identical to that of \cref{thm:mostblocks}.
The third and fifth cases in Table~\ref{tab1} do not appear here, since we have assumed that $p_{2} - p_{1} < 2e$.
For all other cases, we may take exactly the same partitions as in that proof, and obtain the result in the same way.
\end{proof}

\begin{rem}
When the weight is $4$, the above theorem covers all Scopes classes except for the following nine:
\begin{alignat*}{3}
&[ 1, 1, 3 ], \qquad &&[ 1, 1, 4 ], \qquad &&[ 1, 2, 4 ]\\
&[ 1, 2, 5 ], &&[ 1, 3, 5 ], &&[ 1, 3, 6 ]\\
&[ 1, 4, 1 ], &&[ 1, 4, 6 ], &&[ 1, 4, 7 ].
\end{alignat*}
If we tried to argue by the same method, our row-removal to reduce to weight 2 would leave us computing in the weight 2 Scopes classes $[1,2,3]$ and $[1,1,2]$.
For the latter, no submatrix of the decomposition matrix will do what we need, but in \cref{subsec:wt2thirdcase} we argued directly by looking at extensions between simples.
One could instead argue that this Scopes class is Schurian-infinite since the conjugate Scopes class $[1,2,2]$ is.
For the former -- the Rouquier block -- the class is self-conjugate, and the extensions do not suffice, so we had to find other means of directly proving that the block is Schurian-infinite.
\end{rem}

We next deal with some special cases when $w=4$.
The Rouquier block for weight $4$ is the block with Scopes class $[1,4,7]$ (c.f.~\cite{leclercmiyachi,jlm}).

\begin{prop}\label{prop:Rouqblocks}
Let $B(s,4)$ be the Rouquier block of weight $4$.
We define the following four partitions in $B(s,4)$ by their $3$-quotients.

\begin{align*}
\la^{(1)} &= ((1^{2}),(1^2),\varnothing), 
&\la^{(2)} &= ((1),(2,1),\varnothing),\\
\la^{(3)} &= ((1),(1^3),\varnothing), &
\la^{(4)} &= (\varnothing,(2,1^2),\varnothing).
\end{align*}
Then the partial decomposition matrix corresponding to these four partitions in both characteristic $0$ and characteristic $2$ is equal to (\ref{targetmatrixalt})
and hence $B(s,4)$ is Schurian-infinite.
\end{prop}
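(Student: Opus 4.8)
The plan is to verify the two hypotheses of \cref{prop:matrixtrick} for the four partitions: that the characteristic-$0$ submatrix is (\ref{targetmatrixalt}), and that $d^{e,p}_{\la\mu}(1) = d^{e,0}_{\la\mu}(1)$ for $p=2$ and all four partitions (so that, by \cref{lem:charfree}, the graded numbers agree and the characteristic-$2$ submatrix is again (\ref{targetmatrixalt})). First I would convert the given $3$-quotients into honest partitions, using the beta-numbers of the core $\rho = (9,7,5,3^2,2^2,1^2)$ of the Scopes class $[1,4,7]$. A direct calculation yields
\begin{align*}
\la^{(1)} &= (12,10,5^2,4^2,3,1^2), & \la^{(2)} &= (12,7^2,6,4^2,3,1^2),\\
\la^{(3)} &= (12,7,5^2,4^2,3^2,2), & \la^{(4)} &= (9,7^2,6,4^2,3^2,2),
\end{align*}
all of which are $3$-regular and satisfy $\la^{(1)} \doms \la^{(2)} \doms \la^{(3)} \doms \la^{(4)}$, matching the dominance-triangular shape of (\ref{targetmatrixalt}).

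For the characteristic-$0$ entries I would lean on row-removal. Since $\la^{(1)}$, $\la^{(2)}$ and $\la^{(3)}$ all have first part $12$, \cref{thm:rowremFock} strips off their common first row and places the three residual partitions in the single weight-three block with core $(7,5,3^2,2^2,1^2)$; the $3\times 3$ submatrix indexed by $\la^{(1)},\la^{(2)},\la^{(3)}$ therefore equals the corresponding weight-three submatrix, which I would compute by the LLT algorithm \cite{LLT}. This recovers the $v$'s in positions $(2,1)$ and $(3,1)$ and the $0$ in position $(3,2)$. The pair $(\la^{(4)},\la^{(3)})$ has equal partial sums after four rows and identical tails thereafter, so \cref{thm:rowremFock} reduces $d^{e,0}_{\la^{(4)}\la^{(3)}}$ to a weight-two decomposition number, giving the $v$ in position $(4,3)$. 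The remaining two entries, $d^{e,0}_{\la^{(4)}\la^{(1)}} = v^2$ and $d^{e,0}_{\la^{(4)}\la^{(2)}} = v$, I would take from the known characteristic-$0$ decomposition matrix of the weight-four Rouquier block \cite{leclercmiyachi,jlm} (or by direct LLT computation), completing the identification with (\ref{targetmatrixalt}).

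The main obstacle is the reduction modulo $2$: since $p = 2 \leq 4 = w$, \cref{thm:jamesconjsmallwt} does not apply and the adjustment matrix can be nontrivial. By the adjustment formula it is enough to show that, for each column label $\mu \in \{\la^{(1)},\dots,\la^{(4)}\}$, every term $d^{e,0}_{\la^{(i)}\nu}(v)\,a_{\nu\mu}(v)$ with $\nu \doms \mu$ vanishes. For the entries already reduced above, \cref{thm:rowremDecomp} factors them through weight-two and weight-three blocks, where \cref{thm:wt2adjust} and \cref{thm:wt3adj} show the relevant decomposition numbers are characteristic-independent; combined with \cref{lem:charfree} this settles the positions $(2,1),(3,1),(3,2),(4,3)$ in characteristic $2$. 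The stubborn cases are $(\la^{(4)},\la^{(1)})$ and $(\la^{(4)},\la^{(2)})$, which do not reduce to smaller weight under any single row- or column-removal; for these I would verify directly that the adjustment-matrix columns indexed by $\la^{(1)}$ and $\la^{(2)}$ are trivial in characteristic $2$, using the explicit structure of the weight-four Rouquier block. I expect this last verification to be the technical heart of the argument.

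With both submatrices confirmed to equal (\ref{targetmatrixalt}), \cref{prop:matrixtrick} applies verbatim and yields that $B(s,4)$ is Schurian-infinite.
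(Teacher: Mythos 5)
Your set-up, explicit partitions, and characteristic-$0$ computation all agree with the paper's proof (the paper likewise gets the characteristic-$0$ submatrix from the LLT algorithm or the closed formula of \cite[Corollary~10]{leclercmiyachi}), and your row-removal route to the characteristic-$2$ entries in positions $(2,1)$, $(3,1)$, $(3,2)$, $(4,3)$ is a plausible variant. But the decisive step is exactly where you stop: for the pairs $(\la^{(4)},\la^{(1)})$ and $(\la^{(4)},\la^{(2)})$ you only promise to ``verify directly that the adjustment-matrix columns indexed by $\la^{(1)}$ and $\la^{(2)}$ are trivial in characteristic $2$, using the explicit structure of the weight-four Rouquier block,'' without naming any tool that could carry out that verification. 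Since $p=2\leq w=4$, neither \cref{thm:jamesconjsmallwt} nor the weight-$2$ and weight-$3$ results (\cref{thm:wt2adjust}, \cref{thm:wt3adj}) says anything about weight-four adjustment entries, and adjustment matrices of weight-four blocks in characteristic $2$ are not known in general; so as written this is a genuine gap, not a deferred routine check.

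The paper closes it with two specific ingredients. First, by \cite[Proposition~4.4]{jlm}, in a Rouquier block $a_{\nu\mu}(v)=0$ unless $\nu$ and $\mu$ have componentwise equal $3$-quotient sizes; since $\la^{(1)},\la^{(2)},\la^{(3)},\la^{(4)}$ have quotient sizes $(2,2,0)$, $(1,3,0)$, $(1,3,0)$, $(0,4,0)$, the only adjustment entry that can interfere is $a_{\la^{(3)}\la^{(2)}}(v)$ --- note that you do not need entire columns of the adjustment matrix to vanish (a stronger and unnecessary claim), only the entries $a_{\nu\mu}(v)$ for $\nu$ strictly between a row label and a column label in dominance with $d^{3,0}_{\la^{(i)}\nu}(v)\neq 0$. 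Second, that single surviving entry is killed via the Jantzen sum formula: no partition of the block lies strictly between $\la^{(3)}$ and $\la^{(2)}$ in dominance and the Jantzen coefficient $J_{\la^{(3)}\la^{(2)}}$ is zero, so $d^{3,2}_{\la^{(3)}\la^{(2)}}(1)=0$, whence $a_{\la^{(3)}\la^{(2)}}(v)=0$. This is also what makes your position $(4,2)$ work, since $d^{3,0}_{\la^{(4)}\la^{(3)}}(v)=v\neq 0$ contributes to that entry precisely through $a_{\la^{(3)}\la^{(2)}}(v)$; without something equivalent to these two facts, your proposal does not close.
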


\begin{proof}

It may easily be checked, using either the LLT algorithm or the explicit formula for decomposition numbers for Rouquier blocks in characterstic 0 \cite[Corollary~10]{leclercmiyachi}, that the partial decomposition matrix is as described in characteristic $0$.
Recall \cite[Theorem~5.17]{bk09} that there exists a lower unitriangular matrix $A =(a_{\la\mu})$ - the graded adjustment matrix - with entries in $\mathbb{N}[v]$ and rows and columns indexed by the $e$-regular partitions such that
\begin{equation}
d^{e,p}_{\la\mu}(v) = d^{e,0}_{\la\mu}(v) + \sum_{\nu \triangleleft \mu} d^{e,0}_{\la\nu}(v)a_{\nu\mu}(v).\label{eqn:adj} 
\end{equation} 
Suppose $\nu,\mu \in B(\rho,4)$ have respective $3$-quotients $(\nu^{(0)},\nu^{(1)},\nu^{(2)})$ and $(\mu^{(0)},\mu^{(1)},\mu^{(2)})$.
Since $B(s,4)$ is a Rouquier block, by \cite[Proposition~4.4]{jlm} we have  $a_{\nu\mu}(v) = 0$ unless $|\nu^{(i)}|=|\mu^{(i)}|$ for $i=0,1,2$.
Putting these results together, we see that $d_{\la^{(i)}\la^{(j)}}^{3,2}(v) = d_{\la^{(i)}\la^{(j)}}^{3,0}(v)$ unless $j=2$, with
\[
d^{3,2}_{\la^{(i)} \la^{(2)}}(v) = d^{3,0}_{\la^{(i)}\la^{(2)}}(v) + d^{3,0}_{\la^{(i)}\la^{(3)}} (v) a_{\la^{(3)} \la^{(2)}}(v),
\]
so that the partial decomposition matrices agree if $a_{\la^{(3)}\la^{(2)}}=0$.
If $d^{3,2}_{\la^{(3)}\la^{(2)}}=0$ then certainly $a_{\la^{(3)} \la^{(2)}}=0$, so we find this decomposition number.
Note that there does not exist $\sigma \in B(s,4)$ with $\la^{(3)} \triangleleft \sigma \triangleleft \la^{(2)}$ so since the Jantzen coefficient $J_{\la^{(3)}\la^{(2)}}$ is equal to $0$ we also have $d^{3,2}_{\la^{(3)}\la^{(2)}}=0$.
(For more information on the Janzten sum formula, see \cite[Section~5.2]{mathas}.)
\end{proof}

\begin{rem}
The above argument extends readily to proving that any Rouquier block for $e\geq 3$, $w\geq 4$ and $p \neq 3$ is Schurian-infinite (and there exist other partitions that will prove the case $p \neq 2$).
However, it is slightly cleaner, notationally, to handle this one case alone, and our other methods -- used to prove \cref{thm:mostblocks,thm:wlargemain} -- apply to a much broader collection of blocks.
\end{rem}

The following result holds for any parameters $e$ and $p$, although we only require it for our choice of $e=3$ and $p=2$.

\begin{lem} \label{lem:rest}
Let $\la,\mu \in B(\rho,w)$ be such that $\mu$ is $e$-regular and both $\la$ and $\mu$ have exactly $k$ removable $i$-nodes.
Let $\bar{\la}$ and $\bar{\mu}$ be the partitions obtained by removing $k$ nodes from $\la$ and $\mu$ respectively.
Suppose that $\bar{\la}$ and $\bar{\mu}$ both have exactly $k$ addable $i$-nodes.
Then $\bar{\mu}$ is $e$-regular and $d^{e,p}_{\la\mu}(1) \geq d^{e,p}_{\bar{\la}\bar{\mu}}(1)$.
\end{lem}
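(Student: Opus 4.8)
The plan is to establish the inequality by applying the divided power $i$-induction functor $f_i^{(k)}$, using its exactness to compare the multiplicity of $\D\mu$ computed in two different ways. Throughout I work with ungraded modules, so that the grading shifts below are irrelevant to the statement about $d^{e,p}(1)$.

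First I would translate the hypotheses into crystal-theoretic data. I claim the conditions force $\mu$ (and likewise $\la$) to have \emph{no} addable $i$-nodes. Indeed, a short residue computation shows no addable $i$-node of $\mu$ is destroyed by deleting removable $i$-nodes: if $(s,\mu_s+1)$ is an addable $i$-node then the row $s-1$ above it is either untouched or, when it carries a removable $i$-node, has residues forcing $\mu_{s-1}\equiv\mu_s\pmod e$, whence $\mu_{s-1}\ge\mu_s+e$ and the node survives in $\bar\mu$. So every addable $i$-node of $\mu$ persists in $\bar\mu$; since $\bar\mu$ also acquires the $k$ vacated positions as addable $i$-nodes, the assumption that $\bar\mu$ has exactly $k$ addable $i$-nodes forces $\mu$ to have none. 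Hence $\varepsilon_i(\mu)=k$ and $\varphi_i(\mu)=0$, so $\bar\mu=\tilde e_i^{\,k}\mu=\tilde e_i^{\max}\mu$; in particular $\bar\mu$ is $e$-regular since $\tilde e_i$ preserves $e$-regularity, giving the first assertion. The same holds for $\la$, and passing through the crystal yields $\varepsilon_i(\bar\la)=\varepsilon_i(\bar\mu)=0$, $\varphi_i(\bar\la)=\varphi_i(\bar\mu)=k$, together with $\tilde f_i^{\,k}\bar\la=\la$ and $\tilde f_i^{\,k}\bar\mu=\mu$.

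Next I would invoke the branching rules for the cyclotomic KLR algebra. Since $\bar\la$ has $\varepsilon_i(\bar\la)=0$ and exactly $k$ addable $i$-nodes, the only partition reached from $\bar\la$ by adding $k$ nodes of residue $i$ is $\la$ (by the same residue computation, adding at an addable $i$-node creates no new addable $i$-node of residue $i$), so the graded Specht branching rule (cf.~\cite{bkw11,hm12,kmr}) gives $f_i^{(k)}\spe{\bar\la}\cong\spe\la$ up to a grading shift, with multiplicity one. Moreover, the modular branching rule of Brundan and Kleshchev applied to $\bar\mu$, with $\varphi_i(\bar\mu)=k$, shows that $f_i^{(k)}\D{\bar\mu}=f_i^{\max}\D{\bar\mu}$ has simple head isomorphic to $\D{\tilde f_i^{\,k}\bar\mu}=\D\mu$, so $[f_i^{(k)}\D{\bar\mu}:\D\mu]\ge1$. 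Because $f_i^{(k)}$ is exact, computing the multiplicity of $\D\mu$ in $f_i^{(k)}\spe{\bar\la}$ through a composition series of $\spe{\bar\la}$ gives
\[
d^{e,p}_{\la\mu}(1)=[\spe\la:\D\mu]=[f_i^{(k)}\spe{\bar\la}:\D\mu]=\sum_{\nu}[\spe{\bar\la}:\D\nu]\,[f_i^{(k)}\D\nu:\D\mu]\ge[\spe{\bar\la}:\D{\bar\mu}]\,[f_i^{(k)}\D{\bar\mu}:\D\mu]\ge d^{e,p}_{\bar\la\bar\mu}(1),
\]
which is the desired inequality.

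The point requiring the most care, and the main obstacle, is the identification $f_i^{(k)}\spe{\bar\la}\cong\spe\la$ with multiplicity \emph{exactly} one: it is essential both that $\la$ be the unique partition obtained from $\bar\la$ by adding $k$ boxes of residue $i$ and that it occur with coefficient $1$ in the divided power, as otherwise only a weaker inequality $m\,d^{e,p}_{\la\mu}(1)\ge d^{e,p}_{\bar\la\bar\mu}(1)$ would follow. This is precisely where the crystal values $\varepsilon_i(\bar\la)=0$ and $\varphi_i(\bar\la)=k$ (so that $\bar\la$ is the lowest vector of an $i$-string of length $k+1$) are used. A secondary point is the \emph{direction} of the inequality: running the analogous argument with $i$-restriction $e_i^{(k)}$ produces the reverse inequality $d^{e,p}_{\bar\la\bar\mu}(1)\ge d^{e,p}_{\la\mu}(1)$, so one must induce rather than restrict.
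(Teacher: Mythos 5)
Your proof is correct, but it runs along the adjoint route to the paper's argument rather than reproducing it. The paper's (one-line) proof applies $i$-restriction to the projective indecomposable $P(\mu)$, as in \cite[Section~6.1]{mathas}: restriction preserves projectivity, $P(\bar\mu)$ occurs as a direct summand of the restricted module by Kleshchev's modular branching rule fed through the adjunction, and comparing multiplicities of $\spe{\bar\la}$ in Specht filtrations -- these equal decomposition numbers by Brauer--Humphreys reciprocity for the cellular algebra $\hhh$ -- gives $d^{e,p}_{\bar\la\bar\mu}(1)\le d^{e,p}_{\la\mu}(1)$, with the same key collapse you exploit, namely that $\la$ is the unique partition obtained from $\bar\la$ by adding $k$ nodes of residue $i$. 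You instead apply the divided power $f_i^{(k)}$ to the Specht module $\spe{\bar\la}$ and compare composition multiplicities directly; this trades reciprocity and facts about restriction of projectives for exactness of $f_i^{(k)}$ plus the fact that $\D\mu$ occurs in $f_i^{(k)}\D{\bar\mu}$ (as its head, since $\varphi_i(\bar\mu)=k$). Both proofs rest on the same two ingredients -- the Specht branching rule and the modular branching rule for simples -- arranged dually, and both require the preliminary combinatorial verification, which the paper leaves entirely implicit, that the hypotheses force $\varepsilon_i(\mu)=k$, $\varphi_i(\bar\mu)=k$, $\bar\mu=\tilde e_i^{\,k}\mu$, and the multiplicity-one identification $f_i^{(k)}\spe{\bar\la}\cong\spe\la$; your careful residue argument here is sound, and it also delivers the $e$-regularity of $\bar\mu$, which the paper's proof does not address explicitly. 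Your closing remark deserves emphasis: the restriction-side analogue on Specht modules ($e_i^{(k)}\spe\la\cong\spe{\bar\la}$ together with $[e_i^{(k)}\D\mu:\D{\bar\mu}]\ge1$) is equally valid and yields the reverse inequality, so under the hypotheses of the lemma one in fact has equality $d^{e,p}_{\la\mu}(1)=d^{e,p}_{\bar\la\bar\mu}(1)$ -- strictly more than the lemma claims, and consistent with how it is deployed in \cref{prop:NearRouqblocks}, where the upper bounds it provides are matched against lower bounds coming from the adjustment-matrix formula.
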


\begin{proof}
Apply the functor $i$-Res as described in \cite[Section~6.1]{mathas} to the projective indecomposable module $P(\mu)$.
\end{proof}

\begin{prop}\label{prop:NearRouqblocks}
Suppose that $s \in \{[1,4,6], [1,3,6], [1,3,5]\}$.
We define the following four partitions in $B(s,4)$ by their $3$-quotients.
\begin{align*}
\la^{(1)} &= ((1^{2}),(1^2),\varnothing),
&\la^{(2)} &= ((1),(2,1),\varnothing),\\
\la^{(3)} &= ((1),(1^3),\varnothing),
&\la^{(4)} &= (\varnothing,(2,1^2),\varnothing).
\end{align*}
Then the partial decomposition matrix corresponding to these four partitions in both characteristic $0$ and characteristic $2$ is equal to (\ref{targetmatrixalt})
and hence $B(s,4)$ is Schurian-infinite.
\end{prop}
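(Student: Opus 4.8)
The plan is to reuse \cref{prop:Rouqblocks}: the four partitions $\la^{(1)},\dots,\la^{(4)}$ here are precisely those used for the weight~$4$ Rouquier block $B(\hat s,4)$ with $\hat s=[1,4,7]$, and each carries a $3$-quotient supported only on the two runners containing $p_{e-1}$ and $p_{e-2}$. I would first compute the characteristic~$0$ partial decomposition matrix directly by the LLT algorithm~\cite{LLT}; because the third quotient component is empty for all four partitions, these are small computations, and I expect to recover $(\ref{targetmatrixalt})$. Since the graded adjustment matrix has nonnegative coefficients, this already gives the lower bound $d^{3,2}_{\la^{(i)}\la^{(j)}}(1)\geq d^{3,0}_{\la^{(i)}\la^{(j)}}(1)$, the latter being the $(i,j)$-entry of $(\ref{targetmatrixalt})$ evaluated at $v=1$.

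For the matching upper bound in characteristic~$2$ I cannot argue as in \cref{prop:Rouqblocks}, since $B(s,4)$ is not a Rouquier block and so \cite[Proposition~4.4]{jlm} is unavailable to control the adjustment matrix. Instead I would use \cref{lem:rest}. Each of the three Scopes classes $[1,4,6]$, $[1,3,6]$, $[1,3,5]$ differs from the Rouquier class $[1,4,7]$ by shrinking one (or, for $[1,3,5]$, two) of the runner gaps $p_{e-1}-p_{e-2}$, $p_{e-2}-p_{e-3}$ below the Rouquier threshold. For a suitable residue $i$ and integer $k$ tied to the relevant gap, each near-Rouquier partition $\la^{(j)}$ is obtained from the corresponding Rouquier partition $\hat\la^{(j)}$ (which carries the same $3$-quotient) by deleting its $k$ removable $i$-nodes, with $\hat\la^{(j)}$ having exactly $k$ removable and $\la^{(j)}$ exactly $k$ addable $i$-nodes. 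Then \cref{lem:rest} yields $d^{3,2}_{\hat\la^{(i)}\hat\la^{(j)}}(1)\geq d^{3,2}_{\la^{(i)}\la^{(j)}}(1)$, and by \cref{prop:Rouqblocks} the left-hand side equals the $(i,j)$-entry of $(\ref{targetmatrixalt})$ at $v=1$. Combining with the lower bound forces $d^{3,2}_{\la^{(i)}\la^{(j)}}(1)=d^{3,0}_{\la^{(i)}\la^{(j)}}(1)$ for every $i,j$; since $(\ref{targetmatrixalt})$ is the characteristic~$0$ graded matrix, \cref{prop:matrixtrick} then applies and shows $B(s,4)$ is Schurian-infinite.

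I expect the main obstacle to be the combinatorial bookkeeping in the restriction step: one must identify, separately for each of the three classes, the correct residue $i$ and value of $k$ governing the relevant partial runner-swap, and verify the counting hypotheses of \cref{lem:rest} (exactly $k$ removable $i$-nodes on each $\hat\la^{(j)}$ and exactly $k$ addable $i$-nodes on each $\la^{(j)}$), so that the $k$-node deletion both lands in $B(s,4)$ and preserves the $3$-quotient labels. The class $[1,3,5]$ lies two partial-swaps from $[1,4,7]$, so here I would either compose two applications of \cref{lem:rest} or route through one of the intermediate classes $[1,4,6]$, $[1,3,6]$, checking that the needed decomposition numbers have already been pinned down. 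Should the comparison prove delicate for the single off-diagonal zero, an alternative is to kill the entry $d^{3,2}_{\la^{(3)}\la^{(2)}}$ directly by the Jantzen sum formula exactly as in \cref{prop:Rouqblocks}, observing that no partition lies strictly between $\la^{(3)}$ and $\la^{(2)}$ in dominance.
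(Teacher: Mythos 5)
Your proposal matches the paper's proof essentially step for step: LLT computations for the characteristic-$0$ matrices, nonnegativity of the adjustment matrix for the lower bounds in characteristic $2$, and \cref{lem:rest} to import matching upper bounds from the Rouquier block of \cref{prop:Rouqblocks}, forcing equality and allowing \cref{prop:matrixtrick} to apply. The only refinement in the paper is the routing you already anticipate at the end: a single application of \cref{lem:rest} from $[1,4,7]$ lands in the partial-swap neighbours $[1,7,4]$ and $[4,1,7]$ (not directly in the target classes), which are then converted to $[1,4,6]$ and $[1,3,6]$ by Scopes equivalence, and $[1,3,5]$ is reached by one further application of \cref{lem:rest} (giving $[1,6,3]$) followed by Scopes -- precisely your ``route through the intermediate classes.''
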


\begin{proof}
In fact, we prove the stronger claim that the proposition holds for all $s \in \{[1,4,7], [1,7,4],$ $[1,4,6], [4,1,7], [1,3,6], [1,6,3], [1,3,5]\}$.
We may easily verify using the LLT algorithm that the seven partial decomposition matrices are as stated in characteristic $0$, thus by Equation \ref{eqn:adj} we also have lower bounds on the entries in the matrices in characteristic $2$.
The case that $s=[1,4,7]$ is exactly \cref{prop:Rouqblocks}.
We first apply \cref{lem:rest} to obtain upper bounds on the entries for $[1,7,4]$, coming from those known decomposition numbers for the Rouquier block $[1,4,7]$, as above.
Since these agree with the lower bounds, the proposition holds for $[1,7,4]$; an identical argument shows it is true for $[4,1,7]$.
We apply Scopes equivalence to $[1,7,4]$ and $[4,1,7]$ respectively to show that it is true for $[7,1,4]=[1,4,6]$ and $[4,7,1]=[1,3,6]$.
We repeat the argument applying \cref{lem:rest} to $[1,3,6]$ to show the result holds for $[1,6,3]$; finally Scopes equivalence proves that it holds for $[6,1,3]=[1,3,5]$.
\end{proof}

\begin{thm}\label{thm:w4main}
Suppose $e=3$, $p=2$, $w = 4$, and let $s$ be a Scopes class for $w$.
Then the block $B(s,w)$ of $\hhh$ is Schurian-infinite.
\end{thm}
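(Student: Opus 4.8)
The plan is to dispatch the finitely many remaining Scopes classes by combining \cref{thm:mostblocks32} with conjugation and the two special-case propositions. By \cref{thm:mostblocks32}, $B(s,4)$ is already known to be Schurian-infinite whenever its core $\rho$ satisfies $p_2 - p_1 < 2e$, and by the remark following that theorem the only Scopes classes left to treat are the nine listed there, namely $[1,1,3]$, $[1,1,4]$, $[1,2,4]$, $[1,2,5]$, $[1,3,5]$, $[1,3,6]$, $[1,4,1]$, $[1,4,6]$, $[1,4,7]$. First I would record that $[1,4,7]$ is the weight-$4$ Rouquier class, handled by \cref{prop:Rouqblocks}, and that $[1,4,6]$, $[1,3,6]$, $[1,3,5]$ are handled by \cref{prop:NearRouqblocks}. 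This leaves precisely the five classes $[1,1,3]$, $[1,1,4]$, $[1,2,4]$, $[1,2,5]$, $[1,4,1]$, with cores $(4,2)$, $(6,4,2)$, $(5,3,1^2)$, $(7,5,3,1^2)$, $(5,3,1)$ respectively.

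For these five, the key observation is that each has $p_1 - p_0 < 2e$ even though $p_2 - p_1 > 2e$: the large gap sits at the top of the abacus while the bottom gap is small. Since conjugating the core reflects the abacus display -- reversing the order of the runners and interchanging beads with gaps -- it reverses the gap pattern $(p_1 - p_0,\, p_2 - p_1)$, turning a large top gap into a large bottom gap and hence a small top gap. Concretely, I would compute the conjugate of each core (for instance $(4,2)' = (2^2,1^2)$ for $[1,1,3]$, and likewise $(3^2,2^2,1^2)$, $(4,2^2,1^2)$, $(5,3^2,2^2,1^2)$, $(3,2^2,1^2)$ for the others), read off the corresponding normalized triples $[1,3,2]$, $[1,4,4]$, $[1,3,4]$, $[1,4,5]$, $[1,4,3]$, and verify that each conjugate satisfies $p_2 - p_1 < 2e$. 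Each conjugate block is then Schurian-infinite by \cref{thm:mostblocks32}, and since $B(\rho,4)$ and $B(\rho',4)$ are isomorphic via the $\#$-automorphism, so is each of the original five blocks. Assembling these observations covers every Scopes class for $w=4$ and proves the theorem.

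The only genuinely delicate point -- and the step I would check most carefully -- is the dichotomy underlying the argument: all nine residual classes have $p_2 - p_1 > 2e$, and conjugation only helps when the complementary gap $p_1 - p_0$ is smaller than $2e$, for otherwise conjugation sends an uncovered class to another uncovered class rather than into the range of \cref{thm:mostblocks32}. It is exactly the four classes $[1,3,5]$, $[1,3,6]$, $[1,4,6]$, $[1,4,7]$ (those nearest the Rouquier block) that have both $p_1 - p_0 > 2e$ and $p_2 - p_1 > 2e$, which is precisely why they are immune to the conjugation reduction and instead require the direct computations of \cref{prop:Rouqblocks} and \cref{prop:NearRouqblocks}. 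I would therefore confirm that the five remaining classes all genuinely fall on the ``small bottom gap'' side of this dichotomy before invoking \cref{thm:mostblocks32} on their conjugates; the rest of the proof is then routine bookkeeping.
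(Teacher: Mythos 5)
Your proposal takes essentially the same route as the paper's proof: \cref{thm:mostblocks32} handles all but the nine listed Scopes classes, $[1,4,7]$ is dispatched by \cref{prop:Rouqblocks}, the classes $[1,3,5]$, $[1,3,6]$, $[1,4,6]$ by \cref{prop:NearRouqblocks}, and the remaining five by conjugating the core (via the $\#$-automorphism) into the range of \cref{thm:mostblocks32}, exactly as the paper does. One arithmetic slip: the conjugate of $[1,1,3]$ is $[1,3,3]$, not $[1,3,2]$ --- you computed the conjugate core $(2^2,1^2)$ correctly, but the triple you read off corresponds to the core $(2,1^2)$ instead; the slip is harmless here, since $[1,3,3]$ also satisfies $p_2 - p_1 < 2e$ and is therefore still covered by \cref{thm:mostblocks32}, and your remaining four conjugate triples $[1,4,4]$, $[1,3,4]$, $[1,4,5]$, $[1,4,3]$ agree with the paper's.
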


\begin{proof}
All but nine Scopes classes are handled by \cref{thm:mostblocks32}, as discussed in the remark below it.
The Rouquier block $[1,4,7]$ is handled in \cref{prop:Rouqblocks}, while $[1,3,5]$, $[1,3,6]$, and $[1,4,6]$ are dealt with in \cref{prop:NearRouqblocks}.
The remaining Scopes classes $[1,1,3]$, $[1,1,4]$, $[1,2,4]$, $[1,2,5]$, and $[1,4,1]$ are conjugate to the classes $[1,3,3]$, $[1,4,4]$, $[1,3,4]$, $[1,4,5]$, and $[1,4,3]$, respectively, and are thus Morita equivalent to blocks we've already shown to be Schurian-infinite.
\end{proof}

\begin{thm}\label{thm:wlargemain}
Suppose $e=3$, $p=2$, $w \geq 5$, and let $s$ be a Scopes class for $w$.
Then the block $B(s,w)$ of $\hhh$ is Schurian-infinite.
\end{thm}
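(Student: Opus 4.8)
The plan is to mimic the weight-reduction strategy of \cref{thm:mostblocks,thm:mostblocks32}, but to reduce to weight $4$ rather than weight $2$, so that the base cases become the Rouquier and near-Rouquier blocks treated in \cref{prop:Rouqblocks,prop:NearRouqblocks}. First, by \cref{thm:mostblocks32}, the block $B(s,w)$ is Schurian-infinite whenever its core satisfies $p_2 - p_1 < 2e$; the argument there is identical to that of \cref{thm:mostblocks} and does not require $w = 4$. Hence I may assume throughout that $p_2 - p_1 \geq 2e$.

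For such cores, I would slide the lowest bead of the abacus display of $\rho$ down $w - 4$ places on each of four chosen partitions, obtained by adjoining a long first row (equivalently, enlarging the first component of the $3$-quotient) to the four weight-$4$ partitions $\la^{(1)},\dots,\la^{(4)}$ of \cref{prop:Rouqblocks}. Applying \cref{thm:rowremFock} in characteristic $0$ and \cref{thm:rowremDecomp} in characteristic $2$ to remove this first row (the factor contributed by the single long row being trivially $1$) reduces the computation of the relevant $4\times 4$ submatrix of decomposition numbers to a weight-$4$ block. By \cref{prop:Rouqblocks,prop:NearRouqblocks}, in that weight-$4$ block the corresponding submatrix equals $(\ref{targetmatrixalt})$ in both characteristic $0$ and characteristic $2$, so that the hypothesis $d_{\la\mu}^{e,p}(1) = d_{\la\mu}^{e,0}(1)$ of \cref{prop:matrixtrick} is satisfied; applying \cref{prop:matrixtrick} then yields Schurian-infiniteness of $B(s,w)$.

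To organise the case analysis, I would use the graded Scopes equivalences of \cref{prop:grScopes}, together with the observation that $B(\rho,w)$ is isomorphic to $B(\rho',w)$ via the $\#$-automorphism and hence shares the same Schurian-finiteness type, to collapse all cores with $p_2 - p_1 \geq 2e$ to a bounded list of minimal representatives. Replacing a block by its conjugate where convenient, it then remains to verify that the slide-and-remove operation above lands in one of the good weight-$4$ classes $[1,4,7]$, $[1,4,6]$, $[1,3,6]$, $[1,3,5]$ (or a Scopes-equivalent block), for which \cref{prop:Rouqblocks,prop:NearRouqblocks} supply the required characteristic-free copy of $(\ref{targetmatrixalt})$.

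The main obstacle I anticipate is precisely this bookkeeping: confirming that, after sliding down and removing the long first row, the residual weight-$4$ block always satisfies $p_2 - p_1 \geq 2e$ and is Scopes- or conjugate-equivalent to one of the four good classes, rather than to a weight-$4$ block that \cref{thm:w4main} handled only indirectly. Where a clean reduction to a good weight-$4$ class is not immediately available, I would instead invoke the generalisation recorded in the remark after \cref{prop:Rouqblocks} --- that the Jantzen-coefficient argument proves every weight-$w$ Rouquier block Schurian-infinite for $p \neq 3$ --- and then propagate this to the neighbouring large-gap classes exactly as in the proof of \cref{prop:NearRouqblocks}, using \cref{lem:rest} to bound the characteristic-$2$ decomposition numbers from above (matching the lower bounds furnished by the adjustment-matrix formula) and then applying Scopes equivalence. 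Verifying that these two mechanisms together exhaust all cores with $p_2 - p_1 \geq 2e$ for every $w \geq 5$ is the crux of the argument.
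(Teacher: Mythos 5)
Your overall architecture --- slide the lowest bead, remove the long first row via \cref{thm:rowremFock,thm:rowremDecomp}, use \cref{prop:Rouqblocks,prop:NearRouqblocks} as base cases, and organise everything by Scopes equivalence and conjugation --- agrees with the paper for the large-gap classes: when $s_2-s_1\geq 3$ (or, on the conjugate side, $s_1-s_2\geq 4$) the paper uses exactly your partitions $((w-4,1^2),(1^2),\vn)$, $((w-4,1),(2,1),\vn)$, $((w-4,1),(1^3),\vn)$, $((w-4),(2,1^2),\vn)$, lands in $B([1,s_1,s_2-1],4)$ or $B([1,s_1-1,s_2],4)$, and these are Scopes-equivalent to $[1,4,7]$ or $[1,4,6]$ as you intend. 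The genuine gap is in the intermediate cases $s_2-s_1=2$ (and the conjugate case $s_1-s_2=3$). These survive \cref{thm:mostblocks32}, since there $p_2-p_1=3(s_2-s_1)+1=7>2e$, but your reduction to weight $4$ fails for them: removing the long first row lands in $B([1,s_1,s_1+1],4)$, a weight-$4$ class with $p_2-p_1=4<2e$, normalizing to $[1,4,5]$ (or, for the class $[1,3,5]$, to $[1,3,4]$). In such small-gap weight-$4$ blocks neither \cref{prop:Rouqblocks} nor \cref{prop:NearRouqblocks} says anything about your four partitions; \cref{thm:mostblocks32} covers those blocks only via entirely different partitions, so the characteristic-$2$ equality of your $4\times 4$ submatrix is not established there.

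Your fallback does not close this hole. The weight-$w$ class $[1,3,5]$ is self-conjugate ($[1,s_1,s_2]'=[1,s_2-s_1+1,s_2]$), so conjugation cannot move it; and propagating from the weight-$w$ Rouquier block $[1,4,w+3]$ via \cref{lem:rest} would require a chain of steps whose length grows with $w$, each demanding a fresh characteristic-$0$ verification that the same submatrix persists --- which is false in general, since the target matrix genuinely changes as one moves between Scopes classes (compare the different matrices arising across \cref{subsec:wt3fifthcase}). The paper's resolution is different in kind: for $s_2-s_1=2$ (resp.\ $s_1-s_2=3$) it takes partitions with quotients $((w-3,2),(1),\vn)$, $((w-3),(3),\vn)$, $((w-3,1),(2),\vn)$, $((w-3,1^2),(1),\vn)$ and row-removes down to \emph{weight $3$}, landing in $B([1,s_1,s_1+1],3)$ (resp.\ $B([1,s_1-1,s_2],3)$), which is Scopes-equivalent to $B([1,3,4],3)$, where the $e=3$, $p=2$ analysis of \cref{subsec:wt3fifthcase} supplies a characteristic-free copy of (\ref{targetmatrix}). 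Without a weight-$3$ base case of this sort, your induction does not exhaust all cores with $p_2-p_1\geq 2e$, so the proposal as written is incomplete precisely at the classes it flags as ``bookkeeping.''
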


\begin{proof}
First, suppose the Scopes class is $[1,s_1,s_2]$, with $s_1 \leq s_2$.
Note that $[1,s_1,s_2]' = [1,s_2-s_1+1,s_2]$.
Since $B(s,w)$ and $B(s',w)$ are Morita equivalent, it is sufficient to consider the cases where $s_2-s_1 \leq s_1-1$ so we assume further that this inequality holds.

If $s_2-s_1 \leq 1$, the result follows immediately by \cref{thm:mostblocks32}.

So suppose $s_2-s_1 = 2$, so that $s_1 \geq 3$.
Then we define partitions
\begin{align*}
\la^{(1)} &= ((w-3,2),(1),\varnothing), 
&\la^{(2)} &= ((w-3),(3),\varnothing),\\
\la^{(3)} &= ((w-3,1),(2),\varnothing), 
&\la^{(4)} &= ((w-3,1^2),(1),\varnothing).
\end{align*}
By row-removal \cref{thm:rowremFock,thm:rowremDecomp}, the relevant submatrix of the decomposition matrix matches that of the four partitions obtained by removing the first row from each, or in other words removing the lowest bead from the abacus display of each.
The remaining partitions are then in the block $B([1,s_1,s_1+1], 3)$, which is Scopes equivalent to $B([1,3,4], 3)$ for any $s_1 \geq 3$.
The remaining partitions are precisely those used in \cref{subsec:wt3fifthcase}, and the result follows, by \cref{prop:matrixtrick}.

Next, suppose $s_2-s_1=3$, so that we also have $s_1 \geq 4$.
Then we define partitions
\begin{align*}
\la^{(1)} &= ((w-4,1^2),(1^2),\varnothing), 
&\la^{(2)} &= ((w-4,1),(2,1),\varnothing),\\
\la^{(3)} &= ((w-4,1),(1^3),\varnothing), 
&\la^{(4)} &= ((w-4),(2,1^2),\varnothing).
\end{align*}
Arguing as before, removing the first row yields partitions in the block $B([1,s_1,s_1+2], 4)$, which is Scopes equivalent to $B([1,4,6], 4)$ for any $s_1 \geq 4$.
The remaining partitions are precisely those used in \cref{prop:NearRouqblocks}, and the result follows.

Now we suppose that $s_2-s_1\geq 4$, so that we also have $s_1 \geq 5$.
Then, taking the exact same partitions as in the previous case, and performing row removal as before now yields partitions in the block $B([1,s_1,s_2-1], 4)$, which is Scopes equivalent to $B([1,4,7], 4)$.
Since the remaining partitions are still those used in \cref{prop:Rouqblocks}, the result follows once more.
This completes the proof for all Scopes classes $[1,s_1,s_2]$ for $s_2 \geq s_1$.

We now assume that $s_2<s_1$.
Note that $[1,s_1,s_2]' = [1,s_1,s_1-s_2]$, so it suffices to assume that $s_2 \geq s_1-s_2$.

If $s_1-s_2 \leq 2$, the result follows immediately by \cref{thm:mostblocks32}.

Next, suppose $s_1-s_2=3$, so that $s_2 \geq 3$.
Then we define partitions
\begin{align*}
\la^{(1)} &= ((w-3,2),(1),\varnothing), 
&\la^{(2)} &= ((w-3),(3),\varnothing),\\
\la^{(3)} &= ((w-3,1),(2),\varnothing), 
&\la^{(4)} &= ((w-3,1^2),(1),\varnothing).
\end{align*}
Arguing as before, removing the first row yields partitions in the block $B([1,s_1-1,s_2], 3)$, which is Scopes equivalent to $B([1,3,4], 3)$.
The remaining partitions are precisely those used in \cref{subsec:wt3fifthcase}, and the result follows.

Finally, we suppose that $s_1-s_2\geq4$ so that $s_2 \geq 4$.
Then we define partitions
\begin{align*}
\la^{(1)} &= ((w-4,1^2),(1^2),\varnothing), 
&\la^{(2)} &= ((w-4,1),(2,1),\varnothing),\\
\la^{(3)} &= ((w-4,1),(1^3),\varnothing), 
&\la^{(4)} &= ((w-4),(2,1^2),\varnothing).
\end{align*}

Arguing as before, removing the first row yields partitions in the block $B([1,s_1-1,s_2], 4)$, which is Scopes equivalent to $B([1,s_1-s_2+3,4], 4)$.
In turn, this is Scopes equivalent to $B([1,4,6], 4)$ if $s_1 - s_2=4$ or $B([1,4,7], 4)$ if $s_1 - s_2\geq5$.
The remaining partitions are precisely those used in both of \cref{prop:NearRouqblocks,prop:Rouqblocks}, and the result follows.
\end{proof}

Combining \cref{thm:mostblocks,thm:w4main,thm:wlargemain} yields our main result, \cref{thm:main}, that for $e\neq2$, every block of $\hhh$ of weight at least 2 is Schurian-infinite, and these blocks are thus Schurian-finite if and only if they have finite representation type.

\bibliographystyle{amsalpha}  
\addcontentsline{toc}{section}{\refname}
\bibliography{master}

\end{document}